\colorlet{MyBlue}{DodgerBlue!75!Black}
\colorlet{MyGreen}{DarkGreen!95!Black}
\numberwithin{equation}{section}  
\crefname{example}{Ex.}{Exs.}
\newcommand{\eps}{\varepsilon}
\newcommand{\bigoh}{\mathcal{O}}
\DeclareMathOperator*{\argmin}{argmin}
\DeclareMathOperator*{\argmax}{argmax}
\DeclareMathOperator{\cl}{cl}
\DeclareMathOperator{\dom}{dom}
\DeclareMathOperator{\Int}{int}
\DeclareMathOperator{\tr}{tr}
\newcommand{\ca}{\mathtt{a}}
\newcommand{\ce}{\mathtt{e}}
\newcommand{\bA}{\mathbf{A}}
\newcommand{\bX}{\mathbf{X}}
\newcommand{\bb}{\mathbf{b}}
\newcommand{\UU}{\mathbf{U}}
\newcommand{\bx}{\mathbf{x}}
\newcommand{\by}{\mathbf{y}}
\newcommand{\bv}{\mathbf{v}}
\newcommand{\bu}{\mathbf{u}}
\newcommand{\bw}{\mathbf{w}}
\newcommand{\bI}{\mathbf{I}}
\newcommand{\bL}{\mathbf{L}}
\newcommand{\bP}{{\mathbf P}}
\newcommand{\bV}{{\mathbf V}}
\renewcommand{\emptyset}{\varnothing}
\newcommand{\setQ}{\mathsf{Q}}
\newcommand{\scrB}{\mathcal{B}}
\newcommand{\scrD}{\mathcal{D}}
\newcommand{\scrE}{\mathcal{E}}
\newcommand{\scrG}{\mathcal{G}}
\newcommand{\scrL}{\mathcal{L}}
\newcommand{\scrP}{\mathcal{P}}
\newcommand{\scrS}{\mathcal{S}}
\newcommand{\scrV}{\mathcal{V}}
\newcommand{\setC}{\mathsf{C}}
\newcommand{\setH}{\mathsf{H}}
\newcommand{\setE}{\mathsf{E}}
\newcommand{\setK}{\mathsf{K}}
\newcommand{\setX}{\mathsf{X}}
\newcommand{\0}{\mathbf{0}}
\newcommand{\1}{\mathbf{1}}
\newcommand{\Rn}{\R^n}
\newcommand{\R}{\mathbb{R}}
\newcommand{\N}{\mathbb{N}}
\newcommand{\K}{\mathbb{K}}
\newcommand{\symm}{\mathbb{S}}
\DeclareMathOperator{\NC}{\mathsf{NC}}
\newcommand{\bC}{{\mathbf{C}}}
\DeclareMathOperator{\Opt}{Opt}							
\DeclareMathOperator{\CG}{\texttt{CG}}
\DeclareMathOperator{\LCG}{\texttt{LCG}}
\DeclareMathOperator{\ICG}{\texttt{ICG}}
\DeclareMathOperator{\gap}{\mathsf{Gap}}
\DeclareMathOperator{\CGAL}{CGAL}
\theoremstyle{plain}
\newtheorem{theorem}{Theorem}
\newtheorem*{corollary*}{Corollary}
\newtheorem{lemma}[theorem]{Lemma}
\newtheorem{proposition}[theorem]{Proposition}
\theoremstyle{definition}
\newtheorem{definition}[theorem]{Definition}
\newtheorem*{definition*}{Definition}
\newtheorem{assumption}{Assumption}
\newcommand{\ms}[1]{{\color{black}#1}}
\definecolor{applegreen}{rgb}{0.55, 0.71, 0.0}
\newcommand{\gi}[1]{{\color{black}#1}}
\newcommand{\sh}[1]{{\color{black}#1}}
\newcommand{\pdv}[1]{{\color{black} #1}}
\theoremstyle{remark}
\newtheorem{remark}{Remark}
\newtheorem*{remark*}{Remark}
\newtheorem*{notation*}{Notational remark}
\newtheorem{example}{Example}
\numberwithin{theorem}{section}
\numberwithin{remark}{section}
\numberwithin{example}{section}
\DeclarePairedDelimiter{\abs}{\lvert}{\rvert}
\DeclarePairedDelimiter{\inner}{\langle}{\rangle}
\DeclarePairedDelimiter{\norm}{\lVert}{\rVert}
\newacro{ASFW}{Away-Step Frank Wolfe}
\newacro{iid}[i.i.d.]{independent and identically distributed}
\newacro{FOM}{First-order method}
\newacro{LMO}{Linear Mnimization Oracle}
\newacro{LLOO}{Local Linear minimization oracle}
\newacro{FW}{Frank-Wolfe}
\newacro{CG}{Conditional Gradient}
\newacro{SC}{self-concordant}
\newacro{GSC}{generalized self-concordant}
\newacro{CndG}{Conditional Gradient}
\newacro{ILMO}{Inexact Linear Minimization Oracle}
\DeclareMathOperator{\SDPT3}{\texttt{SDPT3}}
\title{A conditional gradient homotopy method with applications to Semidefinite Programming}
\date{\today}
\author[1]{\small Pavel Dvurechensky}
\author[4]{\small Gabriele Iommazzo}
\author[2]{\small Shimrit Shtern}
\author[3]{\small Mathias Staudigl\thanks{Corresponding Author}}
\affil[1]{\footnotesize Weierstrass Institute for Applied Analysis and Stochastics, Mohrenstr. 39, 10117 Berlin, Germany\\
(\href{mailto:Pavel.Dvurechensky@wias-berlin.de}{Pavel.Dvurechensky@wias-berlin.de})}
\affil[2]{\footnotesize Faculty of Data and Decision Sciences, Technion - Israel Institute of Technology, Haifa, Israel\\
(\href{mailto:shimrits@technion.ac.il}{shimrits@technion.ac.il})}
\affil[3]{\footnotesize Department of Mathematics, 68159 Mannheim, B6, Germany\\
(\href{mathias.staudigl@uni-mannheim.de}{mathias.staudigl@uni-mannheim.de})}
\affil[4]{\footnotesize Zuse Institute Berlin, Berlin, Germany\\
(\href{iommazzo@zib.de}{iommazzo@zib.de})}
\begin{document}
\maketitle

\begin{abstract}
We propose a new homotopy-based conditional gradient method for solving convex optimization problems with a large number of simple conic constraints.
Instances of this template naturally appear in semidefinite programming problems arising as convex relaxations of combinatorial optimization problems.
Our method is a double-loop algorithm in which the conic constraint is treated via a self-concordant barrier, and the inner loop employs a conditional gradient algorithm to approximate the analytic central path, while the outer loop updates the accuracy imposed on the temporal solution and the homotopy parameter.
Our theoretical iteration complexity is competitive when confronted to state-of-the-art SDP solvers, with the decisive advantage of cheap projection-free subroutines. Preliminary numerical experiments are provided for illustrating the practical performance of the method.

\end{abstract}

\section{Introduction}
\label{sec:intro}
%
In this paper we investigate a new algorithm for solving a large class of convex optimization problems with conic constraints of the following form: 
\begin{equation}\label{eq:Opt}\tag{P}
 \min_{x} g(x) \quad \text{s.t. } x\in\setX, \scrP(x)\in\setK\subseteq\setH.
\end{equation}
The objective function $g:\setE\to(-\infty,\infty]$ is assumed to be closed, convex, and proper, $\setX\subseteq\setE$ is a compact convex set, $\scrP:\setE\to\setH$ is an affine mapping between two finite-dimensional Euclidean vector spaces $\setE$ and $\setH$, and $\setK$ is a closed convex pointed cone. Problem \eqref{eq:Opt} is sufficiently generic to cover many optimization settings considered in the literature. 
\begin{example}[Packing SDP]
The model template \eqref{eq:Opt} covers the large class of packing semidefinite programs (SDPs) \cite{Iyengar:2010wf,ElbMakNaj22}. In this problem class we have $\setE=\symm^{n}$ -- the space of symmetric $n\times n$ matrices, a collection of positive semidefinite input matrices $\bA_{1},\ldots,\bA_{m}\in\symm^{n}_{+}$, with constraints given by $\scrP(\bX)=[1-\tr(\bA_{1}\bX);\ldots;1-\tr(\bA_{m}\bX)]^{\top}$ and $\setK=\R^{m}_{+}$, coupled with the objective function $g(\bX)=\tr(C\bX)$ and the set $\setX=\{\bX\in\symm^{n}_{+}\vert \tr(\bX)\leq\rho\}$. Special instances of packing SDPs include the MaxCut SDP relaxation of \cite{Goemans:1995aa}, the Lovasz-$\theta$ function SDP, among many others; see \cite{IyengarSIOPT11}. 
\end{example}
\begin{example}[Covering SDP]
\label{ex:covering}
A dual formulation of the packing SDP is the covering problem \cite{ElbMakNaj22}. In its normalized version, the problem reads as 
\begin{align*}
 & \min_{x}\sum_{i=1}^{m}x_{i} \quad 
 \text{s.t.:}  \sum_{i=1}^{m}x_{i}\bA_{i}-\bI\succeq 0, x\in\R^{m}_{+}, 
\end{align*}
where $\bA_{1},\ldots,\bA_{m}\in\symm^{n}_{+}$ are given input matrices. This can be seen as a special case of problem \eqref{eq:Opt} with $g(x)=\sum_{i=1}^{m}x_{i},\scrP(x)=\sum_{i=1}^{m}x_{i}\bA_{i}-\bI,\setK=\symm^{n}_{+},\setH=\symm^{n},\setE=\R^{m}$, and the constraint set $\setX=\{x\in\R^{m}_{+}\vert\sum_{i=1}^{m}x_{i}\leq \rho\}$ for some large $\rho$ added to the problem for the compactification purposes.
\end{example}
\begin{example}[Relative entropy programs]
Relative entropy programs are conic optimization problems in which a linear functional of a decision variable is minimized subject to linear constraints and conic constraints specified by a \emph{relative entropy cone}. The relative entropy cone is defined for triples $(u,v,w)\in\R^{n}\times\R^{n}\times\R^{n}$ via Cartesian products of the following elementary cone 
$$
\setK^{(1)}_{\text{RE}}=\{(\nu,\lambda,\delta)\in\R_{+}\times\R_{+}\times\R\vert -\nu\log\left(\frac{\lambda}{\nu}\right)\leq \delta\}.
$$
As the function $\R^{2}_{++}\ni (\nu,\lambda )\mapsto-\nu\log(\lambda/\nu)$ is the perspective transform of the negative logarithm, we know that it is a convex function \cite{BoyVan04}. Hence,  $\setK^{(1)}_{\text{RE}}$ is a convex cone. The relative entropy cone is accordingly defined as $\setK=\prod_{i=1}^{n}\setK^{(1)}_{\text{RE}}$. This set appears in many important applications, containing estimation of dynamical systems and the optimization of quantum channels; see \cite{chandrasekaran2017relative} for an overview. 
\end{example}
\begin{example}[Relative Entropy entanglement]
\label{ex:REE}
The relative entropy of entanglement (REE) is an important measure for entanglement for a given quantum state in quantum information theory \cite{Nielsen:2010aa}. \sh{Obtaining accurate bounds on the REE is a major problem in quantum information theory.} In \cite{Zinchenko:2010aa} the following problem is used to find a lower bound to the REE:
\begin{align*}
 & \min_{x} g(x)=\tr[\rho(\log(\rho)-\log(x))]\\
&\text{s.t.: } x\in \mathbb{H}^{n}_{+},\tr(x)=1,\; \scrP(x)\in\mathbb{H}^{n}_{+}, 
 \end{align*}
 where $\mathbb{H}^{n}_{+}$ is the cone of $n\times n$ Hermitian positive semi-definite matrices, and $\scrP(x)$ is the so-called partial transpose operator, which is a linear symmetric operator (see \cite{Horodecki:1996aa}). $\rho\in\mathbb{H}^{n}_{+}$ is a given quantum reference state. We obtain a problem of the form \eqref{eq:Opt}, upon the identification $\setE=\mathbb{H}^{n}$ the space of $n\times n$ Hermitian matrices, and $\setK=\mathbb{H}^{n}_{+}$, and $\setX=\{x\in\setE\vert x\in\mathbb{H}^{n}_{+},\tr(x)=1\}$. 
\end{example}
The model template \eqref{eq:Opt} can be solved with moderate accuracy using fast proximal gradient methods, like FISTA and related acceleration tricks \cite{FOMSurvey}. 
In this formalism, the constraints imposed on the optimization problem have to be included in proximal operators which essentially means that one can calculate a projection onto the feasible set. Unfortunately, the computation of proximal operators can impose a significant burden and even lead to intractability of gradient-based methods. This is in particular the case in SDPs with a large number of constraints and decision variables, such as in convex relaxations of NP-hard combinatorial optimization problems \cite{LauRen05}. 
Other prominent examples in machine learning are $k$-means clustering \cite{PenWei07}, $k$-nearest neighbor classification \cite{Weinberger:2009to}, sparse PCA \cite{dAspremont:2008wa}, kernel learning \cite{Lanckriet:2004aa}, among many others. 
In many of these mentioned applications, it is not only the size of the problem that challenges the application of off-the-shelve solvers, but also the desire to obtain sparse solutions. 
As a result, the \ac{CndG} method \cite{FraWol56} has seen renewed interest in large-scale convex programming, since it \sh{only requires computing a \ac{LMO} at} each iteration. Compared to proximal methods, \ac{CndG} features significantly reduced computational costs (e.g. when $\setX$ is a spectrahedron), tractability and interpretability (e.g. it generates solutions as a combination of atoms of $\setX)$. 

\paragraph{Our approach}
In this paper we propose a \ac{CndG} framework for solving \eqref{eq:Opt} with rigorous iteration complexity guarantees. 
Our approach retains the simplicity of \ac{CndG}, but allows to disentangle the different sources of complexity describing the problem. 
Specifically, we study the performance of a new \emph{homotopy/path-following method}, which iteratively approaches a solution of problem \eqref{eq:Opt}. 
To develop the path-following argument we start by decomposing the feasible set into two components. 
We assume that $\setX$ is a compact convex set which admits efficient applications of \ac{CndG}. 
The challenging constraints are embodied by the membership condition $\scrP(x)\in\setK$. 
These are conic restrictions on the solution, and in typical applications of our method we have to manage a lot of them. 
Our approach deals with these constraints via a \emph{logarithmically homogeneous barrier} for the cone $\setK$.
\cite{NesNem94} introduced this class of functions in connection with polynomial-time interior point methods. 
It is a well-known fact that any closed convex cone admits a logarithmically homogeneous barrier (the universal barrier) \cite[Thm. 2.5.1]{NesNem94}.  
A central assumption of our approach is that we have a practical representation of a logarithmically homogenous barrier for the cone $\setK$.
As many restrictions appearing in applications are linear or PSD inequalities, this assumption is rather mild. Exploiting the announced decomposition, we approximate the target problem \eqref{eq:Opt} by a family of parametric penalty-based composite convex optimization problems formulated in terms of the \emph{potential function}
\begin{equation}\label{eq:potential}
\min_{x\in\setX}\{V_{t}(x):=\frac{1}{t}F(x)+g(x)\}.
\end{equation}
This potential function involves a path-following/homotopy parameter $t>0$, and $F$ is a barrier function defined over the set $\scrP^{-1}(\setK)$. 
By minimizing the function $V_{t}$ over the set $\setX$ for a sequence of increasing values of $t$, we can trace the \emph{analytic central path} $z^{\ast}(t)$ of \eqref{eq:potential} as it converges to a solution of \eqref{eq:Opt}. The practical implementation of this path-following scheme is achieved via a double-loop algorithm in which the inner loop performs a number of \ac{CndG} updating steps on the potential function $V_{t}$, and the outer-loop readjusts the parameter $t$ as well as the desired tolerance imposed on the \ac{CndG}-solver. Importantly, unlike existing primal-dual methods for \eqref{eq:Opt} (e.g. \cite{YurTroFerUdeCev21}), our approach generates a sequence of feasible points for the original problem \eqref{eq:Opt}. 

\paragraph{Comparison to the literature}
Exact homotopy/path-following methods were developed in statistics and signal processing in the context of the $\ell_{1}$-regularized least squares problem (LASSO) \cite{TibTay11} to compute the entire regularization path. 
Relatedly, approximate homotopy methods have been studied in this context as well \cite{HalYinZha08,WenYinGol10}, and superior empirical performance has been reported for seeking sparse solutions. 
\cite{XiaoTong2013} is the first reference which investigates the complexity of a proximal gradient homotopy method for the LASSO problem. To our best knowledge, our paper provides the first complexity analysis of a homotopy \ac{CndG} algorithm to compute an approximate solution close to the analytic central path for the optimization template \eqref{eq:Opt}. Our analysis is heavily influenced by recent papers \cite{ZhaFre20} and \cite{Zhao:2023aa}. They study a generalized \ac{CndG} algorithm designed for solving a composite convex optimization problem, in which the smooth part is given by a \emph{logarithmically homogeneous barrier}. Unlike \cite{ZhaFre20}, we analyze a more complicated dynamic problem in which the objective changes as we update the penalty parameter, and we propose important practical extensions including line-search and inexact-\ac{LMO} versions of the \ac{CndG} algorithm for reducing the potential function \eqref{eq:potential}. Furthermore, we analyze the complexity of the whole path-following process in order to approximate a solution to the initial non-penalized problem \eqref{eq:Opt}.

\paragraph{Main Results} 
One of the challenging questions is how to design a penalty parameter updating schedule, and we develop a practical answer to this question. Specifically, our contributions can be summarized as follows: 
\begin{itemize}
\item We introduce a simple \ac{CndG} framework for solving problem \eqref{eq:Opt} and prove that it achieves a $\tilde{O}(\eps^{-2})$ iteration complexity, where $\tilde{O}(\cdot)$ hides polylogarithmic factors (Theorem \ref{th:Complexity}).
\item We provide two extensions for the inner-loop \ac{CndG} algorithm solving the potential minimization problem \eqref{eq:potential}: a line-search and an inexact-\ac{LMO} version. For both, we show that the complexity of the inner and the outer loop are the same as in the basic variant up to constant factors.
\item We present practically relevant applications of our framework, including instances of SDPs arising from convex relaxations of NP-hard combinatorial optimization problems, and provide promising results of the numerical experiments. 
\end{itemize}
Direct application of existing \ac{CndG} frameworks \cite{YurFerLocCev18,YurTroFerUdeCev21} for solving \eqref{eq:Opt} would require projection onto $\setK$, which may be complicated if, e.g., $\setK$ is a PSD cone. Moreover, these algorithms do not produce feasible iterates but rather approximate solutions with primal optimality and dual feasibility guarantees. For specific instances of SDPs, the theoretical complexity achieved by our method matches or even improves the state-of-the-art iteration complexity results reported in the literature. For packing and covering types of SDPs, the state-of-the-art upper complexity bound reported in \cite{ElbMakNaj22} is worse than ours since their algorithm has complexity $\tilde{O}(C_{1}\eps^{-2.5}+C_{2}\eps^{-2})$, where $C_{1},C_{2}$ are constants depending on the problem data\footnote{The notation $\tilde{O}$ hides polylogarithmic factors}. 
For SDPs with linear equality constraints and spectrahedron constraints, the primal-dual method CGAL \cite{YurTroFerUdeCev21} produces \emph{approximately optimal} and  \emph{approximately feasible} points in $O(\eps^{-2})$ iterations. Our method is purely primal, generating feasible points anytime. At the same time, our method shares the same theoretical iteration complexity as CGAL and can deal with additional conic constraints embodied by the membership restriction $\scrP(x)\in\setK$ \sh{without use of projection}. We expect that this added feature will allow us to apply our method to handle challenging scientific computing questions connected to the quantum information theory, such as bounding the relative entropy of entanglement of quantum states (cf. Example \ref{ex:REE} and \cite{Zinchenko:2010aa,Fawzi:2019aa,Faybusovich:2020aa}). 

\paragraph{Organization of the paper}
This paper is organized as follows: Section \ref{sec:prelims} introduces the notation and terminology we use throughout this paper. Section \ref{sec:algo} presents the basic algorithm under consideration in this paper. In section \ref{sec:extensions} we describe an inexact version of our basic homotopy method. \sh{Section \ref{sec:complexityLMO} contains a detailed complexity analysis of our scheme when an exact \ac{LMO} is available. The corresponding statements under our inexact \ac{LMO} are performed in Appendix \ref{app:Inexact}. Section \ref{sec:numerics}} reports the performance of our method in solving relevant examples of SDPs and gives some first comparisons with existing approaches. In particular, numerical results on the mixing time problem of a finite Markov Chain, \sh{and synthetic examples aimed at testing the robustness to scaling} are reported there. Appendix \ref{sec:exp} contains further results on the numerical experiments. 

\section{Notation and Preliminaries}
\label{sec:prelims}
%
Let $\setE$ be a finite-dimensional Euclidean vector space, $\setE^{\ast}$ its dual space, which is formed by all linear functions on $\setE$. 
The value of function $s\in\setE^{\ast}$ at $x\in\setE$ is denoted by $\inner{s,x}$. 
Let $B:\setE\to\setE^{\ast}$ be a positive definite self-adjoint operator. Define the primal and dual norms 
$$
\norm{h}_{B}=\inner{Bh,h}^{1/2},\text{ and }\norm{s}_{B}^{\ast}=\inner{s,B^{-1}s}^{1/2}
$$
for $h\in\setE$ and $s\in\setE^{\ast}$, respectively. \pdv{If $B$ is the identity operator, we simplify notation to $\norm{h}$ and $\norm{s}_{\ast}$ respectively for the primal and the dual norm.} For a linear operator $A:\setE\to\setE^{\ast}$ we define the operator norm
$$
\norm{A}=\max_{h\in\setE:\norm{h}\leq 1}\norm{Ah}_{\ast}.
$$
For a differentiable function $f(x)$ with $\dom(f)\subseteq\setE$, we denote by $f'(x)\in\setE^{\ast}$ its gradient, by $f''(x):\setE\to\setE^{\ast}$ its Hessian, and by $f'''(x)$ its third derivative.  \pdv{Accordingly, given directions $h_1,h_2,h_3 \in \setE$ the first, second, and third directional derivatives of $f$ are denoted respectively as $f'(x)[h_1]$,$f''(x)[h_1,h_2]$,$f'''(x)[h_1,h_2,h_3]$. }

\subsection{Self-Concordant Functions}
\label{sec:SCB}
Let $\setQ$ be an open and convex subset of $\setE$. A function $f:\setE\to(-\infty,\infty]$ is \emph{\ac{SC}} on $\setQ$ if $f\in\bC^{3}(\setQ)$ and for all $x\in\setQ,h\in\setE$, we have 
$\abs{f'''(x)[h,h,h]}\leq 2(f''(x)[h,h])^{3/2}.$ In case where $\cl(\setQ)$ is a \pdv{closed, convex, and pointed (i.e., contains no line) cone},  we have more structure. We call $f$ a \emph{$\nu$-canonical barrier} for $\setQ$, denoted by $f\in\scrB_{\nu}(\setQ)$, if it is \ac{SC} and 
\[
\forall (x,t)\in\setQ\times(0,\infty):\quad f(tx)=f(x)-\nu\log(t).
\]
From \citep[][Prop. 2.3.4]{NesNem94}, the following properties are satisfied by a $\nu$-canonical barrier for each $x\in\setQ,h\in\setE,t>0$:
\begin{align}
&f'(tx)=t^{-1}f'(x),\\
&f'(x)[h]=-f''(x)[x,h],\\
&f'(x)[x]=-\nu,\\
&f''(x)[x,x]=\nu.
\end{align}
We define the local norm $\norm{h}_{f''(x)}:=(f''(x)[h,h])^{1/2}$ for all $x\in\dom(f)$ and $h\in\setE$. 
\ac{SC} functions are in general not Lipschitz smooth. Still, we have access to a version of a descent lemma of the following form \cite[][Thm. 5.1.9]{Nes18}:
\begin{equation}\label{eq:descent}
f(x+h)\leq f(x)+f'(x)[h]+\omega_{\ast}(\norm{h}_{f''(x)})
\end{equation}
for all $x\in\dom(f)$ and $h\in\setE$ such that $\norm{h}_{f''(x)}<1$, where $\omega_{\ast}(t)=-t-\log(1-t)$. It also holds true that \cite[][Thm. 5.1.8]{Nes18}
\begin{equation}\label{eq:SC-convex}
f(x+h)\geq f(x)+f'(x)[h]+\omega(\norm{h}_{f''(x)})
\end{equation}
for all $x\in\dom(f)$ and $h\in\setE$ such that $x+h\in \dom(f)$, where $\omega(t)=t-\log(1+t)$ for $t\geq 0$.

A classical and useful bound is \cite[][Lemma 5.1.5]{Nes18}:
\begin{equation}\label{eq:boundomega}
\frac{t^{2}}{2-t}\leq\omega_{\ast}(t)\leq\frac{t^{2}}{2(1-t)}\quad\forall t\in[0,1).
\end{equation}

\subsection{The Optimization Problem}
\label{sec:opt}
The following assumptions are made for the rest of this paper.
\begin{assumption}\label{ass:g}
The function $g:\setE\to(-\infty,\infty]$ is closed and convex and continuous on $\dom(g)$. The quantity  $\Omega_{g}:=\max_{x,y\in\dom(g)\cap\setX}\abs{g(x)-g(y)}$ is finite. 
\end{assumption}
\begin{assumption}\label{ass:barrier}
$\setK\subset\setH$ is a closed convex \pdv{pointed} cone with $\Int(\setK)\neq\emptyset$, admitting a $\nu$-canonical barrier $f\in\scrB_{\nu}(\setK)$. 
\end{assumption}
The next assumption transports the barrier setup from the codomain $\setK$ to the domain $\scrP^{-1}(\setK)$. This is a common operation in the framework of the "barrier calculus" developed in \cite[][Section 5.1]{NesNem94}.
\begin{assumption}
The map $\scrP:\setE\to\setH$ is linear, and $F(x):=f(\scrP(x))$ is a $\nu$-canonical barrier on the cone $\scrP^{-1}(\setK)$, i.e. $F\in\scrB_{\nu}(\scrP^{-1}(\setK))$. 
\end{assumption}
Note that $\dom(F)=\Int\scrP^{-1}(\setK)$.
\begin{example}
Considering the normalised covering problem presented in Example \ref{ex:covering}. 
\cite{ElbMakNaj22} solve this problem via a logarithmic potential function method \cite{NesNem94}, involving the logarithmically homogeneous barrier $f(\bX)=\log\det(\bX)$ for $\bX\in\symm^{n}_{+}$.  
This is a typical choice in Newton-type methods to impose the semi-definiteness constraint. However, in our projection-free environment, we use the power of the linear minimisation oracle to obtain search directions which leaves the cone of positive semidefinite matrices invariant. Instead, we employ barrier functions to incorporate the additional linear constraints in \eqref{eq:Opt}. Hence, we set $F(x)=f(\scrP(x))=\log\det(\sum_{i=1}^{m}x_{i}\bA_{i}-\bI)$ to absorb the constraint $\scrP(x)\in\symm^{n}_{+}$. In particular, this frees us from matrix inversions, and related computationally intensive steps coming with Newton and interior-point methods.
\end{example}
\begin{assumption}\label{ass:slater}
$\setX$ is a nonempty compact convex set in $\setE$, and $\setC:=\setX\cap\dom(F)\cap\dom(g)\neq\emptyset$.
\end{assumption}
Let $\Opt:=\min\{g(x)\vert x\in\setX,\scrP(x)\in\setK\}$.  Thanks to assumptions \ref{ass:barrier} and \ref{ass:slater}, $\Opt$ is attained. Our goal is to find an $\eps$-solution of problem \eqref{eq:Opt}, defined  as follows. 
\begin{definition}\label{def:eps}
Given a tolerance $\eps>0$, we say that $z^{\ast}_{\eps}$ is an $\eps$-solution for \eqref{eq:Opt} if \sh{$z^{\ast}_{\eps}$ is a \emph{feasible} solution of problem \eqref{eq:Opt} with objective at most $\eps$ higher than $\Opt$, that is,}
$$
z^{\ast}_{\eps}\in\scrP^{-1}(\setK)\cap\setX\text{ and } g(z^{\ast}_{\eps})-\Opt\leq \eps.
$$
\end{definition} 

Given $t>0$, define 
\begin{equation}\label{eq:Optt}
\Opt(t):=\min_{x\in\setX}\{V_{t}(x)=\frac{1}{t}F(x)+g(x)\},\text{ and }z^{\ast}(t)\in\{x\in\setX\vert V_{t}(x)=\Opt(t)\}.
\end{equation}
The following Lemma shows that the path $t\mapsto z^{\ast}(t)$ traces a trajectory in the interior of the feasible set which can be used to approximate a solution of the original problem \eqref{eq:Opt}, provided the penalty parameter $t$ is chosen large enough. 
\begin{lemma}\label{lem:pathfollowing}
For all $t>0$, it holds that $z^{\ast}(t)\in\setC$. In particular, 
\begin{equation}
g(z^{\ast}(t))-\Opt\leq \frac{\nu}{t}.
\end{equation}
\end{lemma}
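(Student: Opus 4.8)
The plan is to handle the two assertions separately, using only the barrier structure of $F$ and the Slater-type condition of Assumption~\ref{ass:slater}.

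\emph{Feasibility.} I would first extend $F$ by $+\infty$ outside $\dom(F)=\Int(\scrP^{-1}(\setK))$; since a canonical barrier blows up at the boundary of its domain, this extension is lower semicontinuous, and hence so is $V_{t}=\tfrac1t F+g$ ($g$ being convex, hence lsc). As $\setX$ is compact, $V_{t}$ attains its minimum on $\setX$, so the point $z^{\ast}(t)$ in \eqref{eq:Optt} is well defined. By Assumption~\ref{ass:slater} there is $x_{0}\in\setC$ with $V_{t}(x_{0})<\infty$, so any minimizer satisfies $V_{t}(z^{\ast}(t))\le V_{t}(x_{0})<\infty$; consequently $F(z^{\ast}(t))<\infty$ and $g(z^{\ast}(t))<\infty$, i.e. $z^{\ast}(t)\in\dom(F)\cap\dom(g)$, and together with $z^{\ast}(t)\in\setX$ this is precisely $z^{\ast}(t)\in\setC$ (so $\scrP(z^{\ast}(t))\in\Int(\setK)$ and $z^{\ast}(t)$ is feasible for \eqref{eq:Opt}).

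\emph{The bound.} Let $x^{\ast}$ be a minimizer of \eqref{eq:Opt} (it exists by Assumptions~\ref{ass:barrier} and~\ref{ass:slater}), so $x^{\ast}\in\setX$, $\scrP(x^{\ast})\in\setK$, $g(x^{\ast})=\Opt$. I would set $\phi:=g+\indicator_{\setX}$, a proper convex function, so that $z^{\ast}(t)$ is an unconstrained minimizer of $\phi+\tfrac1t F$. Since $z^{\ast}(t)\in\dom(F)$ and $\dom(F)$ is open, $F$ is differentiable there, and the subdifferential sum rule (valid with no constraint qualification when one summand is smooth) gives the optimality condition $-\tfrac1t\nabla F(z^{\ast}(t))\in\partial\phi(z^{\ast}(t))$. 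Evaluating the subgradient inequality at $x^{\ast}$ and using $\phi(x^{\ast})=g(x^{\ast})=\Opt$ together with $\phi(z^{\ast}(t))=g(z^{\ast}(t))$ (both points lie in $\setX$) yields
\[
\Opt\ \ge\ g(z^{\ast}(t))-\tfrac1t\inner{\nabla F(z^{\ast}(t)),\,x^{\ast}-z^{\ast}(t)} .
\]
It then remains to show $\inner{\nabla F(z^{\ast}(t)),x^{\ast}-z^{\ast}(t)}=F'(z^{\ast}(t))[x^{\ast}]-F'(z^{\ast}(t))[z^{\ast}(t)]\le\nu$. Because $F\in\scrB_{\nu}(\scrP^{-1}(\setK))$, the homogeneity identity $f'(x)[x]=-\nu$ applied to $F$ gives $F'(z^{\ast}(t))[z^{\ast}(t)]=-\nu$; and because $x^{\ast}\in\scrP^{-1}(\setK)$ and $-\nabla F(\cdot)$ lies in the dual cone of $\scrP^{-1}(\setK)$ for a canonical barrier, $F'(z^{\ast}(t))[x^{\ast}]=\inner{\nabla F(z^{\ast}(t)),x^{\ast}}\le 0$. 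Hence $\inner{\nabla F(z^{\ast}(t)),x^{\ast}-z^{\ast}(t)}\le\nu$, and substituting into the display produces $g(z^{\ast}(t))-\Opt\le\nu/t$.

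\emph{Main obstacle.} The only ingredient not literally contained in the list of identities following Assumption~\ref{ass:barrier} is the sign property $\inner{\nabla F(y),w}\le 0$ for $w\in\scrP^{-1}(\setK)$ and $y\in\dom(F)$ (equivalently, $-\nabla F(y)$ belongs to the dual cone of $\scrP^{-1}(\setK)$), which is what allows the optimal $x^{\ast}$ to be used in the estimate. This is classical for logarithmically homogeneous self-concordant barriers \cite{NesNem94,Nes18}; a self-contained argument fixes $w\in\Int(\scrP^{-1}(\setK))$ and notes that $s\mapsto F(y+sw)=F(w+y/s)-\nu\log s$ is convex on $(0,\infty)$ and tends to $-\infty$ as $s\to\infty$, which forces its derivative to stay negative, so $\inner{\nabla F(y),w}=\lim_{s\downarrow 0}\tfrac{d}{ds}F(y+sw)\le 0$, and the general $w\in\scrP^{-1}(\setK)$ follows by continuity. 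A minor point deserving one sentence is the optimality condition in the second step: differentiability of $F$ at $z^{\ast}(t)\in\dom(F)$ renders the sum rule valid without a qualification, and if $g$ is itself differentiable (as in the algorithmic use of the lemma) it simply reads $\tfrac1t\nabla F(z^{\ast}(t))+\nabla g(z^{\ast}(t))\in-N_{\setX}(z^{\ast}(t))$.
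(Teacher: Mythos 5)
Your proof is correct, and its skeleton coincides with the paper's: both establish $z^{\ast}(t)\in\setC$, write the first-order optimality condition at $z^{\ast}(t)$ (Fermat's rule with the normal cone of $\setX$, which is exactly your formulation via $\phi=g+\indicator_{\setX}$), and then reduce everything to the single inequality $F'(z^{\ast}(t))[x^{\ast}-z^{\ast}(t)]\leq\nu$. The difference lies in how that inequality is justified. The paper simply cites the general bound $\inner{\nabla F(x),y-x}\leq\nu$ valid for any $\nu$-self-concordant barrier \cite[Thm.~5.3.7]{Nes18}, whereas you derive it from the canonical-barrier structure: the logarithmic-homogeneity identity $F'(x)[x]=-\nu$ plus the sign property that $-\nabla F$ maps into the dual cone of $\scrP^{-1}(\setK)$, for which you give a correct self-contained one-line argument. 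Your route is slightly longer but buys two things the paper glosses over: (i) your dual-cone argument applies to any $x^{\ast}\in\scrP^{-1}(\setK)$, including boundary points, whereas the paper asserts without comment that a minimizer $z^{\ast}$ of \eqref{eq:Opt} lies in $\setC$ (hence in the \emph{open} set $\dom(F)$), which need not hold and strictly speaking requires a continuity/closure extension of the cited inequality; (ii) your lsc-plus-compactness argument actually proves existence of $z^{\ast}(t)$ and its membership in $\setC$, which the paper only states. Both proofs are sound; yours is the more careful and self-contained version of the same idea.
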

\begin{proof}
Since $F\in\scrB_{\nu}(\scrP^{-1}(\setK))$, it follows immediately that $z^{\ast}(t)\in\dom(F)\cap\setX$. By Fermat's principle we have 
$$
0\in \frac{1}{t}F'(z^{\ast}(t))+\partial g(z^{\ast}(t))+\NC_{\setX}(z^{\ast}(t)).
$$
Convexity and \cite[][Thm. 5.3.7]{Nes18} implies that for all $z\in\dom(F)\cap\setX$, we have
$$
g(z)\geq g(z^{\ast}(t))-\frac{1}{t}F'(z^{\ast}(t))[z-z^{\ast}(t)]\geq g(z^{\ast}(t))-\frac{\nu}{t}.
$$
Let $z^{\ast}$ be a feasible point point satisfying $g(z^{\ast})=\Opt$. Choosing a sequence $\{z^{j}\}_{j\in\N}\subset\dom(F)\cap\setX$ with $z^{j}\to z^{\ast}$, the continuity on $\dom(g)$ gives 
$$
\Opt\geq g(z^{\ast}(t))-\frac{\nu}{t}.
$$
\end{proof}

\section{Algorithm}
\label{sec:algo}
%
In this section we first describe a new \ac{CndG} method for solving general conic constrained convex optimization problems of the form \eqref{eq:Optt}, for a fixed $t>0$. This procedure serves as the inner loop of our homotopy method. The path-following strategy in the outer loop is then explained in Section \ref{sec:path-following}.

\subsection{The Proposed \ac{CndG} Solver}
The computational efficiency of our approach relies on the availability of a suitably defined minimisation oracle: 
\begin{assumption}\label{ass:LMO}
For any $c\in\setE^{\ast}$, the auxiliary problem
\begin{equation}\label{eq:LMO}
\scrL_{g}(c):=\argmin_{s\in\setX}\{\inner{c,s}+g(s)\}
\end{equation}
is easily solvable. 
\end{assumption}

We can compute the gradient and Hessian of $F$ as 
\[
F'(x)=f'(\scrP(x))\scrP'(x),\quad F''(x)[s,t]=f''(\scrP(x))[\scrP(s),\scrP(t)]\quad \forall x\in\Int\scrP^{-1}(\setK).
\]
This means that we obtain a local norm on $\setH$ given by 
\[
\norm{w}_{x}\pdv{:=}F''(x)[w,w]^{1/2}\qquad \forall (x,w)\in\Int\scrP^{-1}(\setK)\times\setH.
\]
Note that in order to evaluate the local norm we do not need to compute the full Hessian $F''(x)$. It only requires a directional derivative, which is potentially easy to do numerically.\\ 

A \acf{LMO} is defined as an oracle producing the vector field 
\begin{equation}\label{eq:s}
s_{t}(x)\in\scrL_{g}(t^{-1}F'(x)). 
\end{equation}
Note that our analysis does not rely on a specific tie-breaking rule, so any member of the set $\scrL_{g}(t^{-1}F'(x))$ will be acceptable. 

To measure solution accuracy and overall algorithmic progress, we introduce two \emph{merit functions}: 
\begin{align}
\gap_{t}(x)&:=t^{-1}F'(x)[x-s_{t}(x)]+g(x)-g(s_{t}(x)),\text{ and }\\
\Delta_{t}(x)&:=V_{t}(x)-V_{t}(z^{\ast}(t)).
\end{align} 
Note that $\gap_{t}(x)\geq 0$ and $\Delta_{t}(x)\geq 0$ for all $x\in\dom(F)$. Moreover, convexity together with the definition of the point $z^{\ast}(t)$, gives 
\begin{align*}
0&\leq \Delta_{t}(x)=t^{-1}[F(x)-F(z^{\ast}(t))]+g(x)-g(z^{\ast}(t))\\
&\leq t^{-1}F'(x)[x-z^{\ast}(t)]+g(x)-g(z^{\ast}(t))\\
&\leq t^{-1}F'(x)[x-s_{t}(x)]+g(x)-g(s_{t}(x))=\gap_{t}(x).
\end{align*}
Hence, 
\begin{equation}\label{eq:relationGap}
\gap_{t}(x)\geq \Delta_{t}(x)\qquad\forall x\in\dom(F)\cap\dom(g). 
\end{equation}

Define 
\begin{equation}
\ce_{t}(x):=\norm{s_{t}(x)-x}_{x}.
\end{equation}
Then, for \ms{$\alpha\in[0,\min\{1,1/\ce_{t}(x)\})$}, we get from \eqref{eq:descent}
$$
F(x+\alpha(s_{t}(x)-x))\leq F(x)+\alpha F'(x)[s_{t}(x)-x]+\omega_{\ast}(\alpha\ce_{t}(x)).
$$
Together with the convexity of $g$, this implies 
\begin{equation}\label{eq:upperV}
V_{t}(x+\alpha(s_{t}(x)-x))\leq V_{t}(x)-\alpha\gap_{t}(x)+t^{-1}\omega_{\ast}(\alpha\ce_{t}(x))=\ms{V_{t}(x)-\eta_{t,x}(\alpha)},
\end{equation}
\ms{where 
$$
\eta_{t,x}(\alpha):=\alpha\gap_{t}(x)-t^{-1}\omega_{\ast}(\alpha\ce_{t}(x))=\alpha\gap_{t}(x)+t^{-1}\left(\alpha\ce_{t}(x)+\log(1-\alpha\ce_{t}(x))\right).
$$
Optimising this expression w.r.t $\alpha\in[0,1]$, we obtain the analytic step-size policy} 
\begin{equation}\label{eq:alpha}
\alpha_{t}(x):=\min\left\{1,\frac{t\gap_{t}(x)}{\ce_{t}(x)(\ce_{t}(x)+t\gap_{t}(x))}\right\}\in[0,1/\ce_{t}(x)).
\end{equation}
Equipped with this step strategy, procedure $\CG(x^{0},\eps,t)$, described in Algorithm \ref{alg:CndG}, constructs a sequence $\{x^{k}_{t}\}_{k\geq 0}$ which produces an approximately-optimal solution in terms of the merit function $\gap_{t}(\cdot)$ and the potential function gap $\Delta_{t}$. Specifically, our main results on the performance of Algorithm \ref{alg:CndG} are the following iteration complexity estimates, which are proven in Section \ref{sec:complexityLMO}.
\begin{algorithm}[t]
 \caption{$\CG(x^{0},\eps,t)$}
 \label{alg:CndG}
 \begin{algorithmic}
\State {\bfseries Input: } $(x^{0},t)\in\setC\times(0,\infty)$ initial state; $\eps>0$ accuracy level
\For{$k=0,1,\ldots$}
 \If{$\gap_{t}(x^{k})>\eps$}
  \State Obtain $s^{k}=s_{t}(x^{k})$ defined in \eqref{eq:s}.
  \State $\alpha_{k}=\alpha_{t}(x^{k})$ defined in \eqref{eq:alpha}.
  \State Set $x^{k+1}=x^{k}+\alpha_{k}(s^{k}-x^{k})$.
 \Else
  \State \ms{STOP. Return iteration index $k$ and last iterate $x^{k}$.}
\EndIf
\EndFor
\end{algorithmic}
\end{algorithm}

\begin{proposition}\label{prop:gap}
Given $\eta,t>0$, Algorithm $\CG(x^{0}_{t},\eta,t)$ requires at most
\begin{equation}\label{eq:R}
R(x^{0}_{t},\eta,t):= \lceil 5.3(\nu+t\Delta_{t}(x^{0}_{t})+t\Omega_{g})\log(10.6t\Delta_{t}(x^{0}_{t}))\rceil+\lceil \frac{24}{t\eta}(\nu+t\Omega_{g})^{2}\rceil,
\end{equation}
iterations in order to reach a point $x^{k}_{t}$ satisfying  $\gap_{t}(x^{k}_{t})\leq\eta$ for the first time.
\end{proposition}
\begin{proposition}\label{prop:Delta}
Given $\eta,t>0$. Algorithm $\CG(x^{0}_{t},\eta,t)$ requires at most 
\begin{equation}\label{eq:N}
N(x^{0}_{t},\eta,t):=\lceil 5.3(\nu+t\Delta_{t}(x^{0}_{t})+t\Omega_{g})\log(10.6t\Delta_{t}(x^{0}_{t}))\rceil+\lceil 12(\nu+t\Omega_{g})^{2}\left(\frac{1}{t\eta}-\frac{1}{t\Delta_{t}(x^{0}_{t})}\right)_{+}\rceil
\end{equation}
iterations, in order to reach a point $x^{k}_{t}$ satisfying $\Delta_{t}(x^{k}_{t})\leq \eta$ for the first time.
\end{proposition}

\subsection{Updating the Homotopy Parameter}
\label{sec:path-following}
Our analysis so far focuses on minimisation of the potential function $V_{t}(x)$ for a fixed $t>0$. However, in order to solve the initial problem \eqref{eq:Opt}, one must trace the sequence of approximate solutions $\{z^{\ast}(t)\}_{t\geq 0}$ as $t\uparrow \infty$. The construction of such an increasing sequence of homotopy parameters is the purpose of this section. 
 
\begin{algorithm}[t]
 \caption{Method $\texttt{Homotopy}(x^{0},\eps,f)$ }
 \label{alg:Homotopy}
 \begin{algorithmic}
\State {\bfseries Input: } $x^{0}\in\setC$, $f\in\scrB_{\nu}(\setK)$, $t_{0},\eta_{0}>0$.
\State {\bfseries Parameters: } $\eps,\sigma\in(0,1)$.
\State {\bfseries Initialize: } $x^{0}_{t_{0}}=x^{0}$, $I=\lceil\frac{\log(2\eta_{0}/\eps)}{\log(1/\sigma)}\rceil$.
\For{$i=0,1,\ldots,I-1$}
\State Set $\hat{x}_{i}$ 
 the output of $\CG(x^{0}_{t_{i}},\eta_{i},t_{i})$. 
\State Update $t_{i+1}=t_{i}/\sigma,\eta_{i+1}=\sigma\eta_{i}$, $x^{0}_{t_{i+1}}=\hat{x}_{i}$.
\EndFor
\State Set $\hat{z}=\hat{x}_{I}$ the output of $\CG(x^{0}_{t_{I}},\eta_{I},t_{I})$.
\end{algorithmic}
\end{algorithm}
 
Let $\{\eta_{i}\}_{i\geq 0},\{t_{i}\}_{i\geq 0}$ be a sequence of approximation errors and homotopy parameters. For each run $i$, we activate procedure $\CG(x^{0}_{t_{i}},\eta_{i},t_{i})$ with the given configuration $(\eta_{i},t_{i})$. For $i=0$, we assume to have an admissible initial point $x^{0}_{t_{0}}=x^{0}$ available. 
For $i\geq 1$, we restart Algorithm \ref{alg:CndG} using a kind of warm-start strategy by choosing $x^{0}_{t_{i}}=x^{R_{i-1}}_{t_{i-1}}$, where \ms{$R_{i-1}\leq R(x^{0}_{t_{i-1}},\eta_{i-1},t_{i-1})$ denotes the first iterate $k$ of Algorithm $\CG(x^{0}_{t_{i-1}},\eta_{i-1},t_{i-1})$ satisfying $\gap_{t_{i-1}}(x^{k}_{t_{i-1}})\leq\eta_{i-1}$. Note that $R_{i-1}$ is upper bounded by the constant $R(x^{0}_{t_{i-1}},\eta_{i-1},t_{i-1})$ defined in Proposition \ref{prop:gap}.}
After the $i$-th restart, let us call the obtained iterate $\hat{x}_{i}:= x^{R_{i}}_{t_{i}}$. In this way we obtain a sequence $\{\hat{x}_{i}\}_{i\geq 0}$, consisting of candidates for approximately following the central path, as they are $\eta_{i}$-close in terms of the gap $\gap_{t_{i}}$ (and, hence, in terms of the function gap) to the stage $t_{i}$'s optimal point $z^{\ast}(t_i)$. We update the parameters $(\eta_{i},t_{i})$ as follows:
\begin{itemize}
\item The sequence of homotopy parameters is determined as $t_{i}=t_{0}\sigma^{-i}$ for $\sigma\in(0,1)$ until the last round of $\texttt{Homotopy}(x^{0},\eps,f)$ is reached.
\item The sequence of accuracies requested in the algorithm $\CG(x^{0}_{t_{i}},\eta_{i},t_{i})$ is updated by $\eta_{i}=\eta_{0}\sigma^{i}$. 
\item The Algorithm stops after $I\equiv I(\sigma,\eta_{0},\eps)=\lceil\frac{\log(2\eta_{0}/\eps)}{\log(1/\sigma)}\rceil$ updates of the accuracy and homotopy parameters and yields an $\eps$-approximate solution of problem \eqref{eq:Opt}.
\end{itemize}

Our updating strategy for the parameters ensures ''detailed balance'' $t_{i}\eta_{i}=t_{0}\eta_{0}$. This equilibrating choice between the increasing homotopy parameter and the decreasing accuracy parameter is sensible, because the iteration complexity of the \ac{CndG} solver is inversely proportional to $t_{i}\eta_{i}$ (cf. Proposition \ref{prop:gap}). Making the judicious choice $\eta_{0}t_{0}=2\nu$, yields a compact assessment of the total complexity of method $\texttt{Homotopy}(x^{0},\eps,f)$. 
\begin{theorem}\label{th:Complexity}
Choose $t_{0}=\frac{\nu}{\Omega_{g}}$ and $\eta_{0}=2\Omega_{g}$. The total iteration complexity of method $\texttt{Homotopy}(x^{0},\eps,f)$ to find an $\eps$-solution of \eqref{eq:Opt} is 
\begin{equation}
\texttt{Compl}(x^{0},\eps,f)=\tilde{\bigoh}\left(\frac{384\Omega^{2}_{g}\nu}{\eps^{2}(1-\sigma^{2})}+\Omega_{g}\log(21.2\nu(1+(2/\eps)\Omega_{g}))\frac{21.2\nu}{\eps}\frac{2-\sigma}{1-\sigma}\right),
\end{equation}
where $\tilde{\bigoh}$ hides polylogarithmic factors in $\eps^{-1}$. 
\end{theorem}
\begin{remark}
The theorem leaves open the choice of the parameter $\sigma$. We treat $\sigma$ as a hyperparameter that should be calibrated by the user. The proof of Theorem \ref{th:Complexity} can be found in Section \ref{sec:complexityLMO}.
\end{remark}

\section{Modifications and Extensions}
\label{sec:extensions}
%

\subsection{Line Search}
The step size policy employed in Algorithm \ref{alg:CndG} is inversely proportional to the local norm $\ce_{t}(x)$. In particular, its derivation is based on the a-priori restriction that we force our iterates to stay within a trust-region defined by the Dikin ellipsoid. This restriction may force the method to take very small steps, and as a result display bad performance in practice. A simple remedy would be to employ a line search. Given $(x,t)\in\dom(F)\times(0,+\infty)$, let 
\begin{equation}\label{eq:ExactLS}
\gamma_{t}(x)=\argmin_{\gamma\in[0,1]\text{ s.t. } x+\gamma(s_{t}(x)-x) \in \dom(F)}V_{t}(x+\gamma(s_{t}(x)-x)).
\end{equation}
\begin{algorithm}[t]
 \caption{\ac{CndG} with line search: $\LCG(x^{0},\eps,t)$}
 \label{alg:LineSearchCndG}
 \begin{algorithmic}
\State {\bfseries Input: } $(x^{0},t)\in\setC\times(0,\infty)$ initial state; $\eps>0$ accuracy level
\For{$k=0,1,\ldots$}
 \If{$\gap_{t}(x^{k})>\eps$}
\State Same as Algorithm \ref{alg:CndG}, but with step size strategy \eqref{eq:ExactLS}.
\Else
  \State \ms{STOP. Return iteration index $k$ and last iterate $x^{k}$.}
\EndIf
\EndFor
\end{algorithmic}
\end{algorithm}
Thanks to the barrier structure of the potential function $V_{t}$, some useful consequences can be drawn from the definition of $\gamma_{t}(x)$. First, $\gamma_{t}(x)\in\{\gamma\geq 0\vert x+\gamma(s_{t}(x)-x)\in\dom(F)\}$. This implies $x+\gamma_{t}(x)(s_{t}(x)-x)\in\setX\cap\dom(F)\cap\dom(g)$. Second, since $\alpha_{t}(x)$ is also contained in the latter set, we have 
\[
V_{t}(x+\gamma_{t}(x)(s_{t}(x)-x))\leq V_{t}(x+\alpha_{t}(x)(s_{t}(x)-x))\qquad\forall (x,t)\in\dom(F)\times(0,+\infty).
\]
Via a comparison principle, this allows us to deduce the analysis of the sequence produced by $\LCG(x^{0},\eps,t)$ from the analysis of the sequence induced by $\CG(x^{0},\eps,t)$.
Indeed, if $\{\xi^{k}_{t}\}_{k \geq 0}$ is the sequence constructed by the line search procedure $\LCG(x^{0},\eta,t)$, then we have 
$V_{t}(\xi^{k+1}_{t})\leq V_{t}(\xi_{t}^{k}+\alpha_{t}(\xi^{k}_{t})(s_{t}(\xi^{k}_{t})-x^{k}_{t}))$ for all $k$. 
Hence, we can perform the complexity analysis on the majorising function as in the analysis of procedure $\CG(x^{0},\eps,t)$. 
Consequently, all the complexity-related estimates for the method $\CG(x^{0},\eta,t)$ apply verbatim to the sequence $\{\xi^{k}_{t}\}_{k\geq 0}$ induced by the method $\LCG(x^{0},\eta,t)$. 


\subsection{Algorithm with inexact \ac{LMO}}
A key assumption in our approach so far is that we can obtain an exact answer from the \ac{LMO} \eqref{eq:s}. In this section we consider the case when our search direction subproblem is solved only with some pre-defined accuracy, a setting that is of particular interest in optimization problems defined over matrix variables. As an illustration, let us consider instances of \eqref{eq:Opt} where $\setX$ is the spectrahedron of symmetric matrices, namely $\setX=\{\bX\in\R^{n\times n}\vert\bX\succeq 0,\tr(\bX)=1\}$. For these instances solving the subproblem \eqref{eq:LMO} with 
\sh{$g=\tr(\bC\bX)$} corresponds to computing the leading eigenvector of a symmetric matrix, which when $n$ is very large is typically computed inexactly using iterative methods. We therefore relax our definition of the \ac{LMO} by allowing for an \ac{ILMO}, featuring additive and multiplicative errors. To define our \ac{ILMO}, let 
$$
\Gamma_{t}(x,s):=\frac{1}{t}F'(x)[x-s]+g(x)-g(s)\quad t>0,(x,s)\in\setC\times\setC.
$$
By definition of the point $s_{t}(x)$ in \eqref{eq:s} and of the gap function $\gap_{t}(x)$, we have $\Gamma_{t}(x,s)\leq \gap_{t}(x)$ for all $s\in\setC$. 
\begin{definition}\label{def:ILMO}
Given $t>0,\delta\in(0,1]$, and $\theta>0$, we call \sh{an oracle producing} a point $\tilde{s}_{t}(x)\in\setC$ a $(\delta,\theta)$-\ac{ILMO}, 
\sh{such that}
\begin{equation}\label{eq:InexactLMO}
\widetilde{\gap}_{t}(x)\geq\delta(\gap_{t}(x)-\theta), \text{where }\widetilde{\gap}_{t}(x)\equiv \Gamma_{t}(x,\tilde{s}_{t}(x)).
\end{equation}
We write $\tilde{s}_{t}(x)\in\scrL^{(\delta,\theta)}_{g}(t^{-1}\nabla F(x))$ for any direction $\tilde{s}_{t}(x)$ satisfying \eqref{eq:InexactLMO}.
\end{definition}
We note that a $(0,0)$-\ac{ILMO} returns a search point satisfying inclusion \eqref{eq:s}. We therefore refer to the case where we can design a $(0,0)$-\ac{ILMO} as the \emph{exact oracle case.} Moreover, a $(1,\theta)$-\ac{ILMO} is an oracle that only features additive errors. This relaxation for inexact computations has been considered in previous work, such as \cite{Dunn:1978aa,Jag13,Freund:2016aa}. The combination of multiplicative error $\delta$ and additive error $\theta$ displayed in \eqref{eq:InexactLMO} is similar to the oracle constructed in \cite{pmlr-v28-lacoste-julien13}. The above definition can also be interpreted as follows. The answer $s_t(x)$ of the exact \ac{LMO} solves the maximization problem $\max_{s\in \setX} \{\Gamma_{t}(x,s)=t^{-1}F'(x)[x-s]+g(x)-g(s)\}$ and the optimal value in this problem is nonnegative. Thus, \eqref{eq:InexactLMO} can be interpreted as the requirement that the point $\tilde{s}_{t}(x)$ solves the same problem up to the additive error $\theta$ and the multiplicative error $\delta$ (i.e. because it is itself the output of a convex optimisation routine). We emphasize that our analysis does not depend on the specific choice of the point $\tilde{s}_{t}(x)$.\\

Equipped with these concepts, we can derive an analytic step-size policy as in the exact regime. Let $x\in\setC$ be given, $\tilde{s}_{t}(x)\in\scrL^{(\delta,\theta)}_{g}(t^{-1}\nabla F(x))$, and $\alpha\in(0,1)$ small enough so that $\alpha\norm{\tilde{s}_{t}(x)-x}_{x}\equiv\alpha\tilde{\ce}_{t}(x)\in(0,1)$. Just like in \eqref{eq:upperV}, we can then obtain the upper bound 
$$
V_{t}(x+\alpha(\tilde{s}_{t}(x)-x))\leq V_{t}(x)-\alpha \widetilde{\gap}_{t}(x)+\frac{1}{t}\omega_{\ast}(\alpha\tilde{\ce}_{t}(x)).
$$
Optimizing the upper bound with respect to $\alpha$ gives us the analytic step-size criterion 
\begin{equation}\label{eq:inexactstep}
\tilde{\alpha}_{t}(x):=\min\left\{1,\frac{t\widetilde{\gap}_{t}(x)}{\tilde{\ce}_{t}(x)(\tilde{\ce}_{t}(x)+t\widetilde{\gap}_{t}(x))}\right\}.
\end{equation}
The $(\delta,\theta)$-\ac{ILMO}, and the step size policy \eqref{eq:inexactstep} are enough to define a \ac{CndG}-update with inexact computations that are executed by a function $P(x,t,\delta,\theta)$, defined in Algorithm \ref{alg:functionP}.
\begin{algorithm}[h!]
 \begin{algorithmic}
 \caption{Function $P(x,t,\delta,\theta)$}
 \label{alg:functionP}
 \State Obtain $\tilde{s}_{t}(x)\in\scrL^{(\delta,\theta)}_{g}(t^{-1}\nabla F(x))$;
 \State Obtain $\tilde{\alpha}_{t}(x)$ defined in \eqref{eq:inexactstep};
 \State Update $x\leftarrow x+\tilde{\alpha}_{t}(x)(\tilde{s}_{t}(x)-x)$.
\end{algorithmic}
\end{algorithm}

%
 To use this method within a bona fide algorithm, we would need to introduce some stopping criterion. Ideally, such a stopping criterion should aim at making the merit functions $\gap_{t}$ or $\Delta_{t}$ small. However, our inexact oracle does not allow us to make direct inferences on the values of these merit functions. Hence, we need to come up with a stopping rule that uses only quantities that are observable and that are coupled with these merit functions. The natural optimality measure for the inexact oracle case is thus $\widetilde{\gap}_{t}(x)$. Indeed, by the very definition of our inexact oracle, if, for accuracy $\eta>0$, we have arrived at an iterate $x$ for which $\widetilde{\gap}_{t}(x)\leq\delta\eta$, we know from \eqref{eq:InexactLMO} that $\gap_{t}(x)\leq\eta+\theta$. Therefore, by making $\theta$ a function of $\eta$, we would attain essentially the same accuracy as the exact implementation requires when running Algorithm \ref{alg:CndG}. This is the main idea leading to the development of Algorithm \ref{alg:ICG}.

\begin{algorithm}[t]
 \caption{$\texttt{ICG}(x,t,\eta,\delta,\theta)$}
 \label{alg:ICG}
 \begin{algorithmic}
\State {\bfseries Input: } $x\in\setC$, $\delta\in(0,1],\theta>0,t>0,\eta>0$.
\sh{\For{$k=0,1,\ldots$}
 \If{$\widetilde{\gap}_{t}(x^{k})>\eta\delta$}
\State  $x^{k+1}=P(x^{k},t,\delta,\theta)$
\Else
\State   STOP. Return iteration index $k$ and last iterate $x^k$.
\EndIf
\EndFor
}
\end{algorithmic}
\end{algorithm}
There are two types of iteration complexity guarantees that we can derive for $\texttt{ICG}(x,t,\eta,\delta,\theta)$. One is concerned with upper bounding the number of iterations after which our stopping criterion will be satisfied. The other is an upper bound on the number of iterations needed to make the potential function gap $\Delta_{t}(x^{k})$ smaller than $\eta+\theta$. The derivation of these estimates is rather technical and follows closely the complexity analysis of the exact regime. We therefore only present the statements here and invite the reader to consult the Appendix \ref{app:Inexact} for a detailed proof.
\begin{proposition}\label{prop:GapInexact}
 Assume that $\eta,\theta, t>0$, $\delta\in(0,1]$, and $x^{0}\in\setC$. Algorithm $\ICG(x^{0},t,\eta,\delta,\theta)$ requires at most
\begin{align}
\tilde{R}_{t}(x^{0},\eta,\delta,\theta)&:= \left\lceil \frac{5.3(t\delta(\Delta_{t}(x^{0} )-\theta)+\nu+t\Omega_{g})}{\delta}\log \left(\min\left\{\frac{10.6 t \Delta_{t}(x^{0} )}{(1-10.6 t \theta)_+}, \frac{\Delta_{t}(x^{0})}{\eta} \right\}\right)  \right\rceil \notag \\
&+ \left\lceil \frac{12(\nu+t\Omega_{g})^{2}}{t\delta^{2}\eta} \left(2+\frac{\theta}{\eta}\right)\right\rceil+1
\end{align}
iterations in order to reach a point $x^{k}$ satisfying $\widetilde{\gap}_{t}(x^{k})\leq\eta\delta$ for the first time.
\end{proposition}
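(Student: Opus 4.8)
\textbf{Proof plan for Proposition~\ref{prop:GapInexact}.}
The plan is to mirror the two-phase analysis underlying Propositions~\ref{prop:gap} and~\ref{prop:Delta}, but carrying the multiplicative factor $\delta$ and the additive slack $\theta$ through every estimate. The starting point is the per-iteration descent inequality for the inexact update $P(x,t,\delta,\theta)$: plugging the analytic step $\tilde\alpha_t(x)$ from \eqref{eq:inexactstep} into the inexact version of \eqref{eq:upperV} and using the bound \eqref{eq:boundomega} on $\omega_*$, one obtains a recursion of the form
\[
\Delta_t(x^{k+1}) \le \Delta_t(x^k) - c\cdot\min\Bigl\{\widetilde{\gap}_t(x^k),\;\tfrac{t\,\widetilde{\gap}_t(x^k)^2}{\tilde\ce_t(x^k)^2}\Bigr\}
\]
for an absolute constant $c$, exactly as in the exact case but with $\gap_t,\ce_t$ replaced by their tilde-counterparts. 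The key structural facts needed are: (i) $\tilde\ce_t(x^k)$ is bounded above in terms of $\nu + t\Omega_g$ — this follows because $\tilde s_t(x^k),x^k\in\setC$ and the local-norm diameter of $\setC$ in the $\|\cdot\|_x$ metric is controlled by the barrier parameter $\nu$ (via the Dikin-ellipsoid / self-concordance estimates already invoked), plus the $t\Omega_g$ term coming from the $g$-part of the gap; and (ii) the stopping criterion $\widetilde{\gap}_t(x^k) > \eta\delta$ that is in force before termination, together with \eqref{eq:InexactLMO}, which gives $\widetilde{\gap}_t(x^k)\ge\delta(\Delta_t(x^k)-\theta)$ since $\gap_t\ge\Delta_t$.

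From here the argument splits. In the first (linear-convergence) phase, while $\Delta_t(x^k)$ is still large compared to $\theta$, the step size is $\tilde\alpha_t(x^k)=1$ (or the $\min$ is attained by the first branch), and the recursion yields geometric decay of $\Delta_t(x^k)-\theta$ (or of a suitably shifted potential) at rate governed by $\delta/(t\delta(\Delta_t(x^0)-\theta)+\nu+t\Omega_g)$; summing the geometric series and solving for when $\Delta_t$ drops below the threshold where the second branch of the $\min$ takes over gives the first ceiling term, with the $\log$ argument being the ratio of the initial gap to the threshold. The subtlety here is that the threshold now depends on $\theta$: the effective target is $\max\{\Theta(t\theta)^{-1},\,\Delta_t(x^0)/\eta\}$-type quantity, which is why the $\log$ argument in the statement is the minimum of $10.6t\Delta_t(x^0)/(1-10.6t\theta)_+$ and $\Delta_t(x^0)/\eta$ — one term is the exact-case threshold crossing, the other caps it once $\eta$ itself is the binding accuracy. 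In the second (sublinear) phase, the recursion becomes $\Delta_{k+1}\le\Delta_k - c' t\,\Delta_k^2/(\nu+t\Omega_g)^2$ after lower-bounding $\widetilde{\gap}_t\ge\delta(\Delta_t-\theta)$ and using that we are still far from the stopping point, so $\Delta_t-\theta\gtrsim$ some fraction of $\Delta_t$; the standard $1/k$ telescoping of $1/\Delta_k - 1/\Delta_{k+1}\ge c' t/(\nu+t\Omega_g)^2$ then produces the $\lceil 12(\nu+t\Omega_g)^2/(t\delta^2\eta)\cdot(2+\theta/\eta)\rceil$ term, where the factor $\delta^2$ comes from squaring the multiplicative loss in passing from $\widetilde{\gap}_t$ to the genuine descent quantity, and the $(2+\theta/\eta)$ absorbs the gap between the stopping threshold $\eta\delta$ on $\widetilde{\gap}_t$ and the implied control $\gap_t\le\eta+\theta$ on the true gap. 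The final $+1$ accounts for the last iteration that triggers the stopping test.

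The main obstacle I expect is bookkeeping the $\theta$-shift cleanly through both phases so that the constants match the clean form in the statement — in particular, choosing the right "shifted potential" (something like $\Delta_t(x^k)-\theta$ or $\Delta_t(x^k)+\text{const}\cdot\theta$) so that both the geometric and the harmonic recursions close, and verifying that the crossover between the two phases is handled by the $\min$ in the $\log$ argument rather than requiring a separate case. A secondary technical point is ensuring the denominator $(1-10.6t\theta)_+$ is legitimate, i.e. that one only enters the linear phase analysis when $10.6t\theta<1$ (otherwise the first term is vacuous / the ceiling of a non-positive-argument log is handled by convention), which requires a short case distinction at the start. Everything else is a faithful, if tedious, re-run of the exact-case computations in Section~\ref{sec:inner} with tildes and $\delta$'s inserted, so I would relegate the full details to Appendix~\ref{sec:complexityILMO} as the authors do.
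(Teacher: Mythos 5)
Your Phase~I plan is sound and matches the paper: the shifted potential $\Delta_t(x^k)-\theta$ decays geometrically at rate $\delta/(5.3(t\delta(\Delta_t(x^0)-\theta)+\nu+t\Omega_g))$, and the two branches of the $\min$ in the $\log$ come exactly from the two available lower bounds $\Delta_t(x^k)>1/(10.6t)$ and $\Delta_t(x^k)>\eta+\theta$. The genuine gap is in your Phase~II argument. You justify the sublinear recursion by asserting that ``we are still far from the stopping point, so $\Delta_t-\theta\gtrsim$ some fraction of $\Delta_t$.'' This is false: the stopping test is on $\widetilde{\gap}_t$, not on $\Delta_t$, and since $\gap_t\geq\Delta_t$ is only a one-sided inequality, $\widetilde{\gap}_t(x^k)>\eta\delta$ gives no lower bound on $\Delta_t(x^k)$ whatsoever. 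The potential gap can fall below $\theta$ (making $\delta(\Delta_t-\theta)$ negative and the lower bound on $\widetilde{\gap}_t$ vacuous) long before the observable gap drops below $\eta\delta$. A second, related problem is that telescoping $1/\Delta_k-1/\Delta_{k+1}$ only certifies that $\Delta_t$ becomes small, which never triggers a stopping rule stated in terms of $\widetilde{\gap}_t$; for a gap-based stopping time you need the min-of-gaps recursion of \cite[Prop.~2.4]{ZhaFre20} as in the exact Proposition~\ref{prop:gap}, not the harmonic telescoping of Proposition~\ref{prop:Delta}.

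The paper resolves both issues by splitting Phase~II into high states $H_t=\{k:\Delta_t(x^k)\geq\theta\}$ and low states $L_t=\{k:\Delta_t(x^k)<\theta\}$ (which, by monotonicity of $\Delta_t$, occur in that order). On $H_t$ the quantities $d_j=\delta(\Delta_t(x^{k_j})-\theta)$ are nonnegative, satisfy $d_{j+1}\leq d_j-\Gamma_j^2/M$ with $\Gamma_j=\tilde G_t^{k_j}\geq d_j$ and $1/M=t\delta/(12(\nu+t\Omega_g)^2)$, so the Zhao--Freund recursion gives $\min_j\Gamma_j<2M/(q-1)$; combined with $\Gamma_j>\delta\eta$ this yields $q\leq 24(\nu+t\Omega_g)^2/(t\delta^2\eta)+1$ --- that is the ``$2$'' in $(2+\theta/\eta)$. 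On $L_t$ that argument is unavailable precisely because $d_j$ may be negative, so instead one telescopes the raw decrease $\Delta_t(x^{k_j})-\Delta_t(x^{k_{j+1}})\geq t(\tilde G_t^{k_j})^2/(12(\nu+t\Omega_g)^2)\geq t\eta^2\delta^2/(12(\nu+t\Omega_g)^2)$ over an interval of total length at most $\theta$, giving $|L_t|\leq 12\theta(\nu+t\Omega_g)^2/(t\eta^2\delta^2)$ --- that is the ``$\theta/\eta$'' term. Your heuristic that $(2+\theta/\eta)$ ``absorbs the gap between the stopping threshold $\eta\delta$ and the implied control $\gap_t\leq\eta+\theta$'' does not reflect this mechanism and would not produce the stated constant. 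Without the $H_t/L_t$ decomposition (or an equivalent device) the proof does not close.
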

\begin{proposition}\label{prop:DeltaInexact}
Assume that $\eta,\theta, t>0$, $\delta\in(0,1]$, and $x^{0}\in\setC$. Algorithm $\ICG(x^{0},t,\eta,\delta,\theta)$ requires at most
\begin{align}
\tilde{N}(x^{0},\eta,\delta,\theta)&:=\left\lceil \frac{5.3(t\delta(\Delta_{t}(x^{0} )-\theta)+\nu+t\Omega_{g})}{\delta}\log\left( \min\left\{\frac{10.6 t \Delta_{t}(x^{0})}{(1-10.6 t \theta)_+}, \frac{\Delta_{t}(x^{0} )}{\eta} \right\}\right)  \right\rceil \notag\\
& \hspace{2em}+\left\lceil \frac{12(\nu+t\Omega_{g})^{2}}{t\delta^{2}}\left(\frac{1}{\eta}-\frac{1}{\Delta_{t}(x^{0})-\theta}\right)_{+}\right\rceil +1 \label{eq:prop:DeltaInexact}
\end{align}
iterations, in order to reach a point $x^{k}$ satisfying $\Delta_{t}(x^{k})\leq\eta+\theta$ for the first time.
\end{proposition}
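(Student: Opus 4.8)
The plan is to mirror, essentially line by line, the complexity analysis of the exact solver $\CG$ that underlies Propositions~\ref{prop:gap} and \ref{prop:Delta} (carried out in Section~\ref{sec:inner}), replacing the exact gap $\gap_t$ by the inexact gap $\widetilde\gap_t$ and exploiting the comparison that, as long as $\ICG$ has not stopped (so $\widetilde\gap_t(x^k)>\eta\delta$), Definition~\ref{def:ILMO} together with $\gap_t\geq\Delta_t$ gives $\gap_t(x^k)\geq\widetilde\gap_t(x^k)\geq\delta(\gap_t(x^k)-\theta)\geq\delta(\Delta_t(x^k)-\theta)$. Write $\Delta_k:=\Delta_t(x^k)$ and $\tilde\ce_k:=\tilde\ce_t(x^k)$. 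The starting point is the one-step bound already recorded in the text, $V_t(x^{k+1})\leq V_t(x^k)-\tilde\alpha_t(x^k)\widetilde\gap_t(x^k)+t^{-1}\omega_{\ast}(\tilde\alpha_t(x^k)\tilde\ce_k)$ with $\tilde\alpha_t(x^k)$ as in \eqref{eq:inexactstep}; substituting the two branches of the $\min$ defining $\tilde\alpha_t(x^k)$ and subtracting $V_t(z^{\ast}(t))$ yields, exactly as in the exact case, the dichotomy that either the full step $\tilde\alpha_t(x^k)=1$ is taken, in which case $\Delta_{k+1}\leq\Delta_k-\tfrac12\widetilde\gap_t(x^k)$, or $\tilde\alpha_t(x^k)<1$ and $\Delta_{k+1}\leq\Delta_k-t^{-1}\omega\!\big(t\widetilde\gap_t(x^k)/\tilde\ce_k\big)$, with $\omega$ as in \eqref{eq:SC-convex}.

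From here I would follow the two-phase argument of Section~\ref{sec:inner}. It uses the a priori control of $\tilde\ce_k$ established there (this relies only on $\tilde s_t(x^k)\in\setX\subseteq\cl(\scrP^{-1}(\setK))$, the $\nu$-logarithmic homogeneity of $F$, and the identity $\Gamma_t(x,\tilde s_t(x))=\widetilde\gap_t(x)$, hence is completely insensitive to the inexactness; this is where the factor $\nu+t\Omega_g$ originates). Feeding this and $\widetilde\gap_t(x^k)\geq\delta(\Delta_k-\theta)$ into the one-step dichotomy splits the run into (a) a path-approaching phase, active while $\Delta_k-\theta$ stays above a threshold of order $1/(10.6\,t)$, on which the decrease of $\Delta_k-\theta$ per step is at least a fraction $1/\big(5.3(t(\Delta_k-\theta)+(\nu+t\Omega_g)/\delta)\big)$ of $\Delta_k-\theta$; since $\Delta_k\leq\Delta_0$ throughout, this relative decrease is bounded below uniformly on the phase, giving a geometric contraction and hence the first (logarithmic) term with prefactor $5.3\,(t\delta(\Delta_0-\theta)+\nu+t\Omega_g)/\delta$; and (b) a sublinear phase, below the threshold, on which the estimate degenerates to $\Delta_{k+1}-\theta\leq(\Delta_k-\theta)-\Theta\!\big(t\delta^2/(\nu+t\Omega_g)^2\big)(\Delta_k-\theta)^2$, which after telescoping $1/(\Delta_k-\theta)$ gives the $12(\nu+t\Omega_g)^2\,[1/\eta-1/(\Delta_0-\theta)]_+/(t\delta^2)$ term once we impose $\Delta_k-\theta\leq\eta$, i.e.\ $\Delta_k\leq\eta+\theta$. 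The $\min\{10.6\,t\Delta_0/(1-10.6\,t\theta)_+,\ \Delta_0/\eta\}$ inside the logarithm and the truncations $(\cdot)_+$ encode the corner cases — the path-approaching phase being empty ($10.6\,t\theta\geq1$, or $\Delta_0-\theta$ already below threshold), the sublinear phase being empty ($\eta\geq\Delta_0-\theta$), or the target $\eta+\theta$ being hit already during the first phase — each of which is checked separately. The additive $+1$ comes from $\ICG$ evaluating its stopping test before performing the $P$-update, so the certified iterate may be produced one step after $\Delta_k$ first reaches the target. The same scheme, halted instead at the first $k$ with $\widetilde\gap_t(x^k)\leq\eta\delta$, proves the companion Proposition~\ref{prop:GapInexact}.

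Concretely I would organize the write-up as: (i) restate the one-step inequality and the branch dichotomy; (ii) record the bound on $\tilde\ce_t$, noting it is verbatim from the exact analysis; (iii) the path-approaching phase and its iteration count; (iv) the sublinear phase and its iteration count; (v) assembly, handling the degenerate regimes. I expect the principal obstacle to be the clean bookkeeping of the additive error $\theta$: every quantity must be tracked through $\Delta_k-\theta$ (or its positive part) rather than $\Delta_k$, one has to verify — or truncate exactly where it fails — that $\Delta_k>\theta$ on the portion of the trajectory where the contraction and the telescoping are invoked, and the exit point of the path-approaching phase must be pinned down sharply enough that the numerical constants $5.3$, $10.6$, $12$ and the precise form of the $\min$ and $(\cdot)_+$ expressions come out as stated. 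Everything else is a transcription of the exact-oracle estimates of Section~\ref{sec:inner} under the substitution $\gap_t\mapsto\widetilde\gap_t$ together with the insertion of the factors $\delta$, $1/\delta$, $1/\delta^2$ dictated by the comparison inequality above.
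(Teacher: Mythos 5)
Your proposal is correct and follows essentially the same route as the paper's proof in Appendix~\ref{sec:complexityILMO}: the same gap-based two-phase split inherited from Section~\ref{sec:inner}, the same comparison $\widetilde{\gap}_{t}(x^{k})\geq\delta(\Delta_{t}(x^{k})-\theta)$ feeding a geometric contraction of $\Delta_{t}(x^{k})-\theta$ in phase I (yielding the $\min$ inside the logarithm from the two exit conditions $\Delta_{t}(x^{k})\geq 1/(10.6t)$ and $\Delta_{t}(x^{k})>\eta+\theta$), and the same telescoping of $1/(\Delta_{t}(x^{k})-\theta)$ in phase II. The only cosmetic difference is that the paper writes the phase-II decrement as $\frac{t\delta^{2}(\Delta^{j}_{t}-\theta)(\Delta^{j+1}_{t}-\theta)}{12(\nu+t\Omega_{g})^{2}}$ (product rather than square) so the inverse-telescoping is exact, which your monotonicity remark already covers.
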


\paragraph{Updating the Homotopy Parameter in the Inexact Case}
\label{sec:inexact-path-following}
We are now in a position to describe the outer loop of our algorithm with \ac{ILMO}. The construction is very similar to Algorithm \ref{alg:Homotopy}.
\begin{algorithm}[t]
 \caption{$\texttt{Inexact Homotopy}(x^{0},\eps,f,\delta)$ }
 \label{alg:I-Homotopy}
 \begin{algorithmic}
\State {\bfseries Input: } $x^{0}\in\setC$, $f\in\scrB_{\nu}(\setK)$, $\delta \in(0,1]$.
\State {\bfseries Parameters: } $\eps,t_{0},\theta_0=\eta_{0}>0$, $\sigma\in(0,1)$.
\State {\bfseries Initialisation: } $x^{0}_{t_{0}}=x^{0}$, $I=\lceil\frac{\log(3\eta_{0}/\eps)}{\log(1/\sigma)}\rceil$.
\For{$i=0,1,\ldots,I-1$}
\State Set $\hat{x}_{i}$ the output of $\ICG(x^{0}_{t_{i}},t_{i},\eta_{i},\delta,\theta_i)$. 
\State Update $t_{i+1}=t_{i}/\sigma,\eta_{i+1}=\sigma\eta_{i},\theta_{i+1}=\sigma\theta_{i+1}$, $x^{0}_{t_{i+1}}=\hat{x}_{i}$.
\EndFor
\State $\hat{z}=\hat{x}_{I}$ the output of $\ICG(x^{0}_{t_{I}},t_{I},\eta_{I},\delta,\theta_{I})$. 
\end{algorithmic}
\end{algorithm}
As before, our aim is to reach an $\eps$-solution (Definition \ref{def:eps}). 
Let $\{t_{i}\}_{i\geq 0},\{\eta_{i}\}_{i\geq 0},\{\theta_{i}\}_{i\geq 0}$ be sequences of homotopy parameters, accuracies in the inner loop, and \ac{ILMO} inexactnesses. For each run $i$, we activate procedure $\ICG(x^{0}_{t_{i}},t_{i},\eta_{i},\delta,\theta_i)$ with the given configuration $(t_{i},\eta_{i},\theta_i)$. For $i=0$, we assume to have an admissible initial point $x^{0}_{t_{0}}=x^{0}$ available. 
For $i\geq 1$, we restart Algorithm \ref{alg:ICG} using a kind of warm-start strategy by choosing $x^{0}_{t_{i}}=x^{R_{i-1}}_{t_{i-1}}$, where $\tilde{R}_{i-1} \leq \tilde{R}_{t_{i-1}}(x^{0}_{t_{i-1}},\eta_{i-1},\delta,\theta_{i-1})$ is the first iterate $k$ of Algorithm $\ICG(x^{0}_{t_{i-1}},t_{i-1},\eta_{i-1},\delta,\theta_{i-1})$ satisfying $\widetilde{\gap}_{t_{i-1}}(x^{k}_{t_{i-1}})\leq\eta_{i-1}\delta$. Note $\tilde{R}_{i-1}$ is upper bounded by the mentioned number, by means of Proposition \ref{prop:GapInexact}. After the $i$-th restart, let us call the obtained iterate $\hat{x}_{i}:= x^{\tilde{R}_{i}}_{t_{i}}$. In this way we generate a sequence $\{\hat{x}_{i}\}_{i\geq 0}$, consisting of candidates for approximately following the central path, as they are $(\eta_{i}+\theta_i)$-close in terms of $\gap_{t_{i}}$ (and, hence, in terms of the potential function gap $\Delta_{t_{i}}$). We update the parameters $(t_{i},\eta_{i},\theta_{i})$ as follows:
\begin{itemize}
\item The sequence of homotopy parameters is determined as $t_{i}=t_{0}\sigma^{-i}$ for $\sigma\in(0,1)$ until the last round of $\texttt{Inexact Homotopy}(x^{0},\eps,f,\delta)$ is reached.
\item The sequences of accuracies in the inner loop and \ac{ILMO} inexactnesses requested in procedure $\ICG(x^{0}_{t_{i}},t_{i},\eta_{i},\delta,\theta_i)$ is updated by $\eta_{i}=\eta_{0}\sigma^{i}$, $\theta_{i}=\theta_{0}\sigma^{i}$. 
\item The Algorithm stops after $I\equiv I(\sigma,\eta_{0},\eps)=\lceil\frac{\log(3\eta_{0}/\eps)}{\log(1/\sigma)}\rceil$ updates of the accuracy and homotopy parameters, and yields an $\eps$-approximate solution of problem \eqref{eq:Opt}.
\end{itemize}
We underline that it is not necessary to stop the algorithm after a prescribed number of iterations and it is essentially an any-time convergent algorithm.

Our updating strategy for the parameters ensures that $t_{i}\eta_{i}=t_{i}\theta_{i}=t_{0}\eta_{0}=t_{0}\theta_{0}$.
We will make the judicious choice $\eta_{0}t_{0}=2\nu$  to obtain an overall assessment of the iteration complexity of method $\texttt{Inexact Homotopy}(x^{0},\eps,f,\delta)$. The proof is given in Appendix \ref{app:Inexact}.
\begin{theorem}\label{th:Complexity_inexact}
Choose $t_{0}=\frac{\nu}{\Omega_{g}}$ and $\theta_0=\eta_{0}=2\Omega_{g}$. The total iteration complexity, i.e. the number of \ac{CndG} steps, of method $\texttt{Inexact Homotopy}(x^{0},\eps,f,\delta)$ to find an $\eps$-solution of \eqref{eq:Opt} is 
$
\texttt{Compl}(x^{0},\eps,f,\delta)= O\left( \frac{\nu \Omega_{g}^2}{\delta^2 \eps^2 (1-\sigma^2)}\right).
$
\end{theorem}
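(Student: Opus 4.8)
The plan is to mirror the proof of Theorem \ref{th:Complexity} (the exact case), replacing the inner-loop bound from Proposition \ref{prop:gap} by the inexact bound from Proposition \ref{prop:GapInexact}, and then to sum over the $I$ outer iterations. First I would fix the parameter choices $t_0 = \nu/\Omega_g$, $\theta_0 = \eta_0 = 2\Omega_g$, so that $t_i\eta_i = t_i\theta_i = t_0\eta_0 = 2\nu$ for every $i$, and $I = \lceil \log(3\eta_0/\eps)/\log(1/\sigma)\rceil = O(\log(\Omega_g/\eps)/\log(1/\sigma))$. I would then argue correctness: by Proposition \ref{prop:DeltaInexact} (or directly from the stopping rule of $\ICG$ together with \eqref{eq:InexactLMO}), after the $i$-th restart the iterate $\hat{x}_i$ satisfies $\gap_{t_i}(\hat x_i) \le \eta_i + \theta_i$, hence $\Delta_{t_i}(\hat x_i) \le \eta_i + \theta_i$ by \eqref{eq:relationGap}; combined with Lemma \ref{lem:pathfollowing} (which gives $g(z^\ast(t_i)) - \Opt \le \nu/t_i$) and the definition of $\Delta_{t_i}$, this yields $g(\hat x_i) - \Opt \le (\eta_i + \theta_i) + \nu/t_i \le 2\eta_i + \nu/t_i$. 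Since $\eta_i = \eta_0\sigma^i$ and $t_i = t_0\sigma^{-i}$, the final index $i = I$ makes the right-hand side at most $O(\eps)$ by the choice of $I$, so $\hat z = \hat x_I$ is an $\eps$-solution.

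For the complexity count, I would plug the parameter choices into the bound of Proposition \ref{prop:GapInexact}. The key observation is the warm-start control of $\Delta_{t_i}(x^0_{t_i})$: since $x^0_{t_i} = \hat x_{i-1}$ and $\Delta_{t_{i-1}}(\hat x_{i-1}) \le \eta_{i-1} + \theta_{i-1} = 2\eta_0\sigma^{i-1}$, one relates $\Delta_{t_i}$ to $\Delta_{t_{i-1}}$ across the parameter change $t_{i-1}\to t_i = t_{i-1}/\sigma$. Using $V_{t_i}(x) = \frac{1}{t_i}F(x) + g(x)$ and the identity $\frac{1}{t_i} = \frac{\sigma}{t_{i-1}}$, together with boundedness of $g$ on $\dom(g)\cap\setX$ by $\Omega_g$ and of $F'$-type quantities via the $\nu$-logarithmic homogeneity, one gets a bound of the form $t_i\Delta_{t_i}(x^0_{t_i}) = O(\nu + t_i\Omega_g) = O(\nu)$ using $t_i\Omega_g = t_0\Omega_g\sigma^{-i} = \nu\sigma^{-i}$ — so the relevant products scale like $\nu$ times a geometric factor. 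The dominant term in Proposition \ref{prop:GapInexact} is the second one, $\frac{12(\nu + t_i\Omega_g)^2}{t_i\delta^2\eta_i}(2 + \theta_i/\eta_i)$: since $\theta_i/\eta_i = \theta_0/\eta_0 = 1$, this is $\frac{36(\nu + t_i\Omega_g)^2}{t_i\delta^2\eta_i}$, and with $t_i\eta_i = 2\nu$ and $t_i\Omega_g = \nu\sigma^{-i}$ it becomes $O\!\big(\frac{\nu(1+\sigma^{-i})^2}{\delta^2}\big) = O\!\big(\frac{\nu\sigma^{-2i}}{\delta^2}\big)$. Summing the geometric series $\sum_{i=0}^{I-1}\sigma^{-2i} = \frac{\sigma^{-2I}-1}{\sigma^{-2}-1} = O\!\big(\frac{\sigma^{-2I}}{1-\sigma^2}\big)$ and using $\sigma^{-I} = O(\eta_0/\eps) = O(\Omega_g/\eps)$ gives the leading contribution $O\!\big(\frac{\nu\Omega_g^2}{\delta^2\eps^2(1-\sigma^2)}\big)$. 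The logarithmic (first) terms in Proposition \ref{prop:GapInexact} sum to only $O\!\big(\frac{\nu}{\delta}\log(\cdot)\cdot I\big)$, which is lower order, and the $+1$ per outer step contributes $O(I)$, also lower order; so the stated bound follows.

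The main obstacle I anticipate is the warm-start estimate controlling $\Delta_{t_i}(x^0_{t_i})$ — specifically verifying that passing from $(t_{i-1},\eta_{i-1})$ to $(t_i,\eta_i)$ does not blow up the potential-function residual beyond an $O(\nu)$-scale quantity, so that the logarithmic factor $\log\big(\min\{10.6 t_i\Delta_{t_i}(x^0_{t_i})/(1-10.6 t_i\theta_i)_+,\ \Delta_{t_i}(x^0_{t_i})/\eta_i\}\big)$ stays bounded (ideally using the second argument of the $\min$, which is $\le 2$ since $\Delta_{t_i}(x^0_{t_i}) \lesssim 2\eta_{i-1} = 2\eta_i/\sigma$, keeping that log term $O(1)$). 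This is the same difficulty present in the exact case, and I would handle it exactly as there; the inexactness parameters $\delta,\theta_i$ enter only through the explicit factors $1/\delta^2$ and $\theta_i/\eta_i = 1$ already accounted for, so no new conceptual issue arises beyond bookkeeping. The remainder is the geometric-sum arithmetic sketched above.
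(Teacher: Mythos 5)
Your plan follows the paper's own proof of Theorem \ref{th:Complexity_inexact} essentially verbatim: correctness via the stopping rule, the definition of the $(\delta,\theta)$-ILMO and Lemma \ref{lem:pathfollowing}; complexity by summing Proposition \ref{prop:GapInexact} over epochs using the warm-start estimates $t_i(\Delta_{t_i}(x^0_{t_i})-\theta_i)=O(\nu)$ and $\Delta_{t_i}(x^0_{t_i})/\eta_i=O(1/\sigma)$ (derived exactly as in the exact case), with the dominant contribution coming from the geometric sum of $(\nu+t_i\Omega_g)^2/(t_i\delta^2\eta_i)$. The one slip is in the accuracy bookkeeping: the bound $g(\hat x_i)-\Opt\le \eta_i+\theta_i+2\nu/t_i=3\eta_i$ must be obtained from $\gap_{t_i}(\hat x_i)\le \eta_i+\theta_i$ via \eqref{eq:g2} and the barrier inequality $F'(\hat x_i)[z^{\ast}(t_i)-\hat x_i]\le\nu$, not from $\Delta_{t_i}(\hat x_i)\le\eta_i+\theta_i$ and ``the definition of $\Delta_{t_i}$'' (which does not control $F(z^{\ast}(t_i))-F(\hat x_i)$); this is precisely why $I$ is set with $\log(3\eta_0/\eps)$, and it does not affect the final bound.
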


\section{Complexity Analysis}
\label{sec:complexityLMO}
%
In this section, we give a detailed presentation of the analysis of the iteration complexity of Algorithm \ref{alg:Homotopy}.
\subsection{Analysis of procedure $\CG(x^{0},\eps,t)$}
\label{sec:inner}
Recall $\Omega_{g}:=\max_{x,y\in\dom(g)\cap\setX}\abs{g(x)-g(y)}$ and $\setC=\dom(F)\cap\setX\cap\dom(g)$. The following estimate can be established as in \cite[Prop. 3]{ZhaFre20}. We therefore omit its simple proof. 
\begin{lemma}\label{lem:e-bound}
For all $(x,t)\in\setC\times(0,\infty)$, we have
\begin{equation}\label{eq:eOmega}
t^{-1}\ce_{t}(x)\leq \nu/t+\gap_{t}(x)+\Omega_{g}.
\end{equation}
\end{lemma}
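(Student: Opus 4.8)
The plan is to estimate the local norm $\ce_t(x)=\norm{s_t(x)-x}_x$ from above using the barrier's self-concordance properties, specifically the fact that $F\in\scrB_\nu(\scrP^{-1}(\setK))$ and that $s_t(x)$ lies in $\dom(F)$ whenever $x\in\setC$. The natural tool here is the logarithmic homogeneity identities for canonical barriers: in particular $F''(x)[x,x]=\nu$ and $F'(x)[h]=-F''(x)[x,h]$, combined with the one-sided convexity estimate \eqref{eq:SC-convex}, i.e. $F(y)\geq F(x)+F'(x)[y-x]+\omega(\norm{y-x}_x)$ for $y\in\dom(F)$. First I would apply this lower bound with $y=s_t(x)$ to get $\omega(\ce_t(x))\leq F(s_t(x))-F(x)-F'(x)[s_t(x)-x]$.

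Next I would control the right-hand side. The term $-F'(x)[s_t(x)-x]=F'(x)[x-s_t(x)]$ appears directly in the gap function; indeed $\frac1t F'(x)[x-s_t(x)]=\gap_t(x)-g(x)+g(s_t(x))$, so $-F'(x)[s_t(x)-x]=t(\gap_t(x)+g(s_t(x))-g(x))\leq t(\gap_t(x)+\Omega_g)$. The remaining term $F(s_t(x))-F(x)$ needs an upper bound; here I expect to use a standard trick for logarithmically homogeneous barriers — writing $F(s_t(x))-F(x)\leq F'(s_t(x))[s_t(x)-x]$ by convexity applied the other way, then using $F'(s_t(x))[s_t(x)]=-\nu$ together with a bound on $F'(s_t(x))[x]$, or alternatively bounding $F(s_t(x))-F(x)$ by $\langle\nabla F(x),s_t(x)-x\rangle$ plus a quadratic-in-local-norm term is circular, so the homogeneity route is the right one. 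Concretely, the cleanest path is: by convexity of $F$, $F(s_t(x))-F(x)\leq F'(s_t(x))[s_t(x)-x]=-\nu-F'(s_t(x))[x]$, and since $F'(s_t(x))[x]\geq -\norm{x}_{s_t(x)}\cdot\norm{\ldots}$ type bounds get awkward; instead I would simply use that $F(s_t(x))-F(x)\leq F'(x)[s_t(x)-x]+\omega_*(\ce_t(x))$ is \emph{not} available (needs $\ce_t(x)<1$), so the homogeneity identity $F'(s_t(x))[s_t(x)]=-\nu$ is what forces the $\nu$ to appear.

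Assembling, I would obtain $\omega(\ce_t(x))\leq \nu+t\gap_t(x)+t\Omega_g$, and then invert $\omega$. Since $\omega(\tau)=\tau-\log(1+\tau)\geq \tau - \log(1+\tau)$ and for the regime of interest one has the crude bound $\omega(\tau)\geq \tau/2 - O(1)$ or, more usefully, $\omega(\tau)\geq \tau$ minus a logarithmic correction — actually the reference \cite{ZhaFre20} likely uses $\omega(\tau)\geq \tau - \log(1+\tau)$ and then the inequality $\tau\leq \omega(\tau)+\log(1+\tau)\leq \omega(\tau)+\tau/2$ for the relevant range is false in general, so the correct elementary fact is that $\omega(\tau)\ge \frac{\tau^2}{2(1+\tau)}$ won't linearize it either. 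The honest route: one shows $\ce_t(x)\le \omega(\ce_t(x)) + \log(1+\ce_t(x))$ trivially by definition, and since $\log(1+\tau)\le \tau$ we cannot conclude; so the argument in \cite{ZhaFre20} must instead bound $\ce_t(x)$ by $F'(x)[x-s_t(x)]+\nu$ directly via $F'(x)[s_t(x)-x]\ge -\norm{s_t(x)-x}_x\,\norm{x}_x^{\!*}$-style Cauchy–Schwarz together with $\norm{x}_x = \sqrt{F''(x)[x,x]}=\sqrt\nu$, giving $-F'(x)[s_t(x)-x]=F''(x)[x,s_t(x)-x]\le \sqrt\nu\,\ce_t(x)$ — wait, that's the wrong direction; rather $\ce_t(x)^2 = F''(x)[s_t(x)-x,s_t(x)-x]$ and one expands $s_t(x)-x$ and uses that $s_t(x)\in\dom(F)$ implies $\norm{s_t(x)-x}_x$ is bounded via the Dikin-ellipsoid/ray argument. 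This last inversion/linearization is the step I expect to be the main obstacle, and I would handle it by citing the computation in \cite[Prop. 3]{ZhaFre20} essentially verbatim, adapting notation, rather than reproving the elementary inequality chain here.
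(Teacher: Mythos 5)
Your plan does not actually reach the inequality; it ends by deferring the decisive step to \cite[Prop.~3]{ZhaFre20}, which happens to be all the paper itself does (the lemma is stated with the remark that it ``can be established as in \cite[Prop.~3]{ZhaFre20}'' and no proof is written out). So your endpoint coincides with the paper's, but the routes you sketch before falling back on the citation would not work, and the mechanism that the cited proposition actually uses is never identified. The first route rests on the unjustified (and in general false) premise that $s_{t}(x)\in\dom(F)$: the oracle output only lies in $\setX\cap\dom(g)$, and in the intended applications it is typically an extreme point sitting on the boundary of the cone, where $F=+\infty$. Hence neither the lower bound \eqref{eq:SC-convex} at $y=s_{t}(x)$ nor any useful control of $F(s_{t}(x))-F(x)$ is available, and the same objection kills the variant using $F'(s_{t}(x))[\cdot]$. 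You correctly observe that the Cauchy--Schwarz/homogeneity computation $\abs{F'(x)[s_{t}(x)-x]}\le\sqrt{\nu}\,\ce_{t}(x)$ goes the wrong way --- it is exactly the paper's \emph{lower} bound \eqref{eq:app_1}, not \eqref{eq:eOmega}.

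The missing idea is the logarithmic-homogeneity inequality $\norm{w}_{x}\le -F'(x)[w]$, valid for every $w$ in the \emph{closed} cone $\cl(\scrP^{-1}(\setK))$ (equivalently $F''(x)[w,w]\le (F'(x)[w])^{2}$; cf.\ \cite[Prop.~2.3.4]{NesNem94}), applied to $w=s_{t}(x)$ --- note that only membership in the closed cone is needed, not in $\dom(F)$. Writing $s=s_{t}(x)$ and using $F'(x)[h]=-F''(x)[x,h]$ and $F''(x)[x,x]=\nu$, one expands
\begin{align*}
\ce_{t}(x)^{2}=F''(x)[s,s]+2F'(x)[s]+\nu\le a^{2}-2a+\nu,\qquad a:=-F'(x)[s]\ge 0,
\end{align*}
and since $a=F'(x)[x-s]+\nu=t\gap_{t}(x)+t\bigl(g(s)-g(x)\bigr)+\nu\le t\gap_{t}(x)+t\Omega_{g}+\nu=:R$, with $R\ge\max\{a,\nu\}$ and $\nu\ge 1$, an elementary case distinction ($a\ge\nu$ versus $a<\nu$) gives $a^{2}-2a+\nu\le R^{2}$, i.e.\ $\ce_{t}(x)\le R$, which is \eqref{eq:eOmega} after dividing by $t$. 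None of your attempted steps produces this quadratic expansion or the key inequality behind it, so as a blind proof the proposal has a genuine gap.
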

Observe that
\begin{align}
\ce_{t}(x)&=\norm{s_{t}(x)-x}_{x}=\sqrt{F''(x)[s_{t}(x)-x,s_{t}(x)-x]}\nonumber\\
&\geq \nu^{-1/2}\abs{F'(x)[s_{t}(x)-x]}\nonumber\\
&=\frac{t}{\sqrt{\nu}}\abs{\gap_{t}(x)-g(x)+g(s_{t}(x))}\nonumber\\
&\geq \frac{t}{\sqrt{\nu}}(\gap_{t}(x)-\Omega_{g}). \label{eq:app_1}
\end{align}

\subsubsection{Proof of Proposition \ref{prop:Delta}}
\label{S:proof_exact_Delta}
The proof mainly follows the steps of the proof of Theorem 1 in \cite{ZhaFre20}. The main technical challenge in our analysis is taking care of the penalty parameter $t$. A main step in the complexity analysis is the identification of two convergence regimes, depending on the magnitude of the merit function $\gap_{t}$. 

\begin{lemma}\label{lem:phases}
Define $\ca_{t}:=\frac{\nu}{t}+\Omega_{g}$, and the partition 
\[
\K_{I}(t)=\{k\in\N\vert \gap_{t}(x_{t}^{k})>\ca_{t}\}, \quad \K_{II}(t)=\N\setminus \K_{I}(t). 
\]
If $k\in\K_{I}(t)$ we have
  \begin{equation}
\Delta_{t}(x^{k}_{t})-\Delta_{t}(x^{k+1}_{t})\geq\frac{1}{5.3}\frac{G^{k}_{t}}{\nu+tG^{k}_{t}+t\Omega_{g}} \geq \frac{\Delta_{t}(x^{k}_{t})}{5.3(\nu+t\Omega_{g}+t\Delta_{t}(x^{k}_{t}))},
\end{equation}
and if $k\in\K_{II}(t)$ we have  
\begin{equation}\label{eq:LBe}
\frac{1}{\Delta_{t}(x^{k+1}_{t})}\geq \frac{1}{\Delta_{t}(x^{k}_{t})}+\frac{t}{12(\nu+t\Omega_{g})^{2}}
\end{equation}
In particular, $\{\Delta_{t}(x^{k}_{t})\}_{k\in\N}$ is monotonically decreasing. 
\end{lemma}
Before proving this Lemma, let us explain how we use it to estimate the total number of iterations needed by method $\CG(x^{0},\eps,t)$ until the stopping criterion is satisfied. Motivated by the two different regimes identified in Lemma \ref{lem:phases}, we can bound the number of iterations produced by $\CG(x^{0}_{t},\eta,t)$ before the stopping criterion $\Delta_{t}(x^{k}_{t})\leq\eta$ applies in two steps: First, we bound the number of iterations which the algorithm spends in the set $\K_{I}(t)$; Second we bound the number of iterations the algorithm spends in $\K_{II}(t)$. Combining these two upper bounds, yields the total iteration bound reported in Proposition \ref{prop:Delta}. 

\begin{proof}[Proof of Lemma \ref{lem:phases}]
Assume that iteration $k$ is in phase $\K_{I}(t)$. To reduce notational clutter, we set $\ce^{k}_{t}\equiv\ce_{t}(x^{k}_{t})$ and $G^{k}_{t}\equiv\gap_{t}(x^{k}_{t})$. Combining the condition $G^{k}_{t} > \frac{\nu}{t}+\Omega_{g}$ of phase $\K_{I}(t)$ with \eqref{eq:app_1}, we deduce that $\ce^{k}_{t}\geq \sqrt{\nu}\geq 1$. This in turn implies $\frac{tG^{k}_{t}}{\ce^{k}_{t}(\ce^{k}_{t}+tG^{k}_{t})}<\frac{tG^{k}_{t}}{\ce^{k}_t+t G^{k}_{t}}<1$. Hence, at this iteration, algorithm $\CG(x^{0},\eps,t)$ chooses the step sizes $\alpha_{k}=\frac{tG^{k}_{t}}{\ce^{k}_{t}(\ce^{k}_{t}+tG^{k}_{t})}$. The per-iteration reduction of the potential function can be estimated as
\begin{align}
V_{t}(x^{k+1}_{t})&\leq V_{t}(x^{k}_{t})-\frac{G^{k}_{t}}{\ce^{k}_{t}}\frac{t G^{k}_{t}}{\ce_{t}^{k}+tG^{k}_{t}}+\frac{1}{t}\omega_{\ast}\left(\frac{t G^{k}_{t}}{\ce_{t}^{k}+tG^{k}_{t}}\right)\notag\\
&=V_{t}(x^{k}_{t})-\frac{G^{k}_{t}}{\ce^{k}_{t}}+\frac{1}{t}\log\left(1+\frac{tG_{t}^{k}}{\ce^{k}_{t}}\right) \notag\\
&=V_{t}(x^{k}_{t})-\frac{1}{t}\omega\left(\frac{tG^{k}_{t}}{\ce^{k}_{t}}\right) \label{eq:V_t_progress_1}
\end{align}
where $\omega(\tau)=\tau-\log(1+\tau)$ and $\omega_{*}(t)=-\tau-\log(1-\tau)$. This readily yields $\Delta_{t}(x^{k}_{t})-\Delta_{t}(x^{k+1}_{t})\geq \frac{1}{t}\omega\left(\frac{tG^{k}_{t}}{\ce^{k}_{t}}\right)$ and implies that $\{\Delta_{t}(x^{k}_{t})\}_{k\in\K_{I}(t)}$ is monotonically decreasing. For $k\in\K_{I}(t)$, we also see $\frac{tG^{k}_{t}}{\nu+tG^{k}_{t}+t\Omega_{g}}>\frac{1}{2}$. Together with \eqref{eq:eOmega}, this implies 
\begin{equation}\label{eq:ratio1/2}
\frac{tG_{t}^{k}}{\ce^{k}_{t}}\geq\frac{G^{k}_{t}}{\nu/t+G_{t}^{k}+\Omega_{g}}=\frac{tG^{k}_{t}}{\nu+tG_{t}^{k}+t\Omega_{g}}>\frac{1}{2}.
\end{equation}
Using \eqref{eq:relationGap}, the monotonicity of $\omega(\cdot)$ and that $\omega(\tau) \geq \frac{\tau}{5.3}$ for $\tau\geq 1/2$ (see \cite[Prop. 1]{ZhaFre20}), the fact that the function $x\mapsto \frac{x}{a+tx}$ is strictly increasing for $t,a>0$, we arrive at 
\begin{equation}\label{eq:step_1_decrease_intial}
\Delta_{t}(x^{k}_{t})-\Delta_{t}(x^{k+1}_{t})\geq\frac{1}{5.3}\frac{G^{k}_{t}}{\nu+tG^{k}_{t}+t\Omega_{g}} \geq \frac{\Delta_{t}(x^{k}_{t})}{5.3(\nu+t\Omega_{g}+t\Delta_{t}(x^{k}_{t}))}.
\end{equation}
Next, consider an integer $k\in\K_{II}(t)$. We have to distinguish the iterates using large steps $(\alpha_{k}=1$), from those using short steps ($\alpha_{k}<1$). Let us first consider the case where $\alpha_{k}=1$. Then, $tG^{k}_{t}\geq \ce^{k}_{t}(tG^{k}_{t}+\ce^{k}_{t})$, which implies $\ce^{k}_{t}\in(0,1)$, and $G^{k}_{t}\geq\frac{(\ce^{k}_{t})^{2}}{t(1-\ce^{k}_{t})}$. Moreover, using \eqref{eq:boundomega}, we readily obtain
\begin{align*}
V_{t}(x^{k+1}_{t})&\leq V_{t}(x^{k}_{t})-G^{k}_{t}+\frac{1}{t}\omega_{\ast}(\ce^{k}_{t})\\
&\leq V_{t}(x^{k}_{t})-G^{k}_{t}+\frac{1}{t}\frac{(\ce^{k}_{t})^{2}}{2(1-\ce^{k}_{t})}\\
&\leq V_{t}(x^{k}_{t})-\frac{G_{t}^{k}}{2}.
\end{align*}
Therefore, 
$$
\Delta_{t}(x^{k}_{t})-\Delta_{t}(x^{k+1}_{t})\geq \frac{G_{t}^{k}}{2}\geq \frac{1}{2}\frac{(G_{t}^{k})^{2}}{\nu/t+\Omega_{g}}=\frac{1}{2}\frac{t(G^{k}_{t})^{2}}{\nu+t\Omega_{g}}\geq \frac{1}{12}\frac{t(G^{k}_{t})^{2}}{(\nu+t\Omega_{g})^{2}}.
$$
In the regime where $\alpha_{k}<1$, we obtain the estimate
$\Delta_{t}(x^{k}_{t})-\Delta_{t}(x^{k+1}_{t})\geq \frac{1}{12}\frac{t(G^{k}_{t})^{2}}{(\nu+t\Omega_{g})^{2}}$, by following the same reasoning as in \cite{ZhaFre20}. Using the relation $G^{k}_{t}\geq\Delta_{t}(x^{k}_{t})\geq\Delta_{t}(x^{k+1}_{t})$, we conclude 
$$
\frac{1}{\Delta_{t}(x^{k+1}_{t})}-\frac{1}{\Delta_{t}(x^{k}_{t})}\geq\frac{t}{12(\nu+t\Omega_{g})^{2}}.
$$
\end{proof}

We now exploit this monotonicity of the potential function gap in order to upper bound the number of iterations which Algorithm $\CG(x^{0}_{t},\eta,t)$ spends in the disjoint phases $\K_{I}(t)$ and $\K_{II}(t)$, respectively.

\begin{proof}[Proof of Proposition \ref{prop:Delta}]
Given the pair $(t,\eta)\in(0,\infty)^{2}$, denote by $N\equiv N(x^{0}_{t},\eta,t)$ the upper bound defined in \eqref{eq:N}. We separate $N$ into two parts, corresponding with stages $\K_{I}(t)$ and $\K_{II}(t)$, as they are defined in the proof of Lemma~\ref{lem:phases}.
We will show that $N=K_t(x^0_t)+M$ where $K_t(x^0_t)$ is a bound on the number of iterates within $\K_I(t)$ and $M$ is bound on the number of iterates within $\K_{II}(t)$ until $\Delta_t(x^k_t)\leq\eta$ is reached.

Note that Lemma~\ref{lem:phases} ensures that for all $k\in \N=\K_{I}(t)\cup \K_{II}(t)$, it holds that $\Delta_t(x^{k+1}_t)\leq \Delta_t(x^{k}_t)$, i.e. the sequence $\Delta_t(x^{k}_t)$ is monotonically decreasing.

We begin by deriving an expression for $K_{t}(x^{0}_{t})$, the number of iterations which $\CG(x^{0}_{t},\eta,t)$ spends in $\K_{I}(t)$. To that end, let $\{k_{i}\}_{i\in\N}$ denote the increasing set of indices enumerating the elements of the set $\K_{I}(t)$. From \eqref{eq:step_1_decrease_intial} and  $\Delta_{t}(x^{k_{i+1}}_{t})\leq \Delta_{t}(x^{k_i+1}_{t})\leq \Delta_{t}(x^{k_i}_{t})\leq\Delta_{t}(x^{0}_{t}$), we conclude 
\begin{equation}\label{eq:step_1_decrease}
\Delta_{t}(x^{k_i}_{t})\left[1-\frac{1}{5.3(\nu+t\Omega_{g}+t\Delta_{t}(x^{0}_{t}))}\right]\geq\Delta_{t}(x^{k_i+1}_{t})\geq \Delta_{t}(x^{k_{i+1}}_{t}).
\end{equation}
Setting $q_{t}:=\frac{1}{5.3(\nu+t\Omega_{g}+t\Delta_{t}(x^{0}_{t}))}\in(0,1)$ (since $\nu\geq 1$), we arrive at the recursion 
$$
\Delta_{t}(x^{k_{i}}_{t})\leq(1-q_{t})^{i}\Delta_{t}(x^{0}_{t})\leq\exp(-q_{t}i)\Delta_{t}(x^{0}_{t}).
$$
Furthermore, by definition, all $k_i\in \K_I(t)$ satisfy $G_t^{k_i}\geq \ca_{t}= \frac{\nu}{t}+\Omega_g$, so that \eqref{eq:ratio1/2} yields
\begin{align*}
\Delta_{t}(x^{k_i}_{t})&\geq \Delta_{t}(x^{k_i}_{t})-\Delta_{t}(x^{k_i+1}_{t}) \geq\frac{G_{t}^{k_i}}{5.3(\nu+tG^{k_i}_{t}+t\Omega_{g})}\geq \frac{1}{10.6 t}.
\end{align*}
Hence, $\frac{1}{10.6 t\Delta_{t}(x^{0}_{t})}\leq \exp(-q_{t}\abs{\K_{I}(t)})$, and  
\begin{equation}\label{eq:ub_first_stage}
K_t(x^0_t):= \lceil 5.3(\nu+t\Delta_{t}(x^{0}_{t})+t\Omega_{g})\log(10.6t\Delta_{t}(x^{0}_{t}))\rceil
\end{equation}
yields the upper bound. 

We proceed analogously with the analysis on the set $\K_{II}(t)$. Let $\{\ell_{j}\}_{j\in\N_{0}}$ the increasing set of indices enumerating the elements of the set $\K_{II}(t)$, and $M=N-K_{t}(x^{0}_{t})$ the upper bound on iterations in this set. Using inequality \eqref{eq:LBe} for $k:=\ell_j$ together with the monotonicity of $\Delta_t(x^k_t)$ results in
\begin{equation}\label{eq:inverse_new}
\frac{1}{\Delta_{t}(x^{\ell_{j+1}}_{t})}\geq \frac{1}{\Delta_{t}(x^{\ell_j+1}_{t})}\geq\frac{1}{\Delta_{t}(x^{\ell_j}_{t})}+\frac{t}{12(\nu+t\Omega_{g})^{2}}.
\end{equation}
 Telescoping the inequality \eqref{eq:inverse_new} together with 
 $\Delta_{t}(x^{\ell_1}_{t})\leq \Delta_{t}(x^{0}_{t})$ we get 
$$
\frac{1}{\Delta_{t}(x^{\ell_{M}}_{t})}\geq \frac{1}{\Delta_{t}(x^{\ell_0}_{t})}+\frac{Mt}{12(\nu+t\Omega_{g})^{2}}\geq\frac{1}{\Delta_{t}(x^{0}_{t})}+\frac{Mt}{12(\nu+t\Omega_{g})^{2}}.
$$
Thus, $\Delta_{t}(x^{\ell_{M}}_{t})\leq\eta$, is satisfied for $M=\lceil 12(\nu+t\Omega_{g})^{2}(\frac{1}{t\eta}-\frac{1}{t\Delta_{t}(x^{0}_{t})})_{+}\rceil$. This demonstrates that at most $N=K_{t}(x^{0}_{t})+M$ iterations of $\CG(x^{0}_{t},\eta,t)$ are sufficient for achieving $\Delta_{t}(x_{t}^{k})\leq\eta.$
\end{proof}

\subsubsection{Proof of Proposition \ref{prop:gap}}
\sh{Using the bound $K_t(x^0_t)$ on the number of iterations in $\K_I(t)$ from the proof of Proposition~\ref{prop:Delta}, we are left to bound the number of iteration in $\K_{II}(t)$, until the condition $G_t^k\leq \eta$ is satisfied.}

Let $\{k_{j}(t)\}_{j}$ denote the increasing set of indices enumerating the elements of the set $\K_{II}(t)$. From \eqref{eq:LBe} we know that 
\[
\Delta_{t}(x_{t}^{k_{j+1}(t)})-\Delta_{t}(x^{k_{j}(t)}_{t})\leq-\frac{1}{12}\frac{t(G_{t}^{k_{j}(t)})^{2}}{(\nu+t\Omega_{g})^{2}}, \text{ and }G_{t}^{k_{j}(t)}\geq\Delta_{t}(x^{k_{j}(t)}_{t})\geq \Delta_{t}(x^{k_{j}(t)}_{t}).
\]
Set $d_{j}\equiv \Delta_{t}(x^{k_{j}(t)}_{t})$ and $\frac{1}{M}\equiv \frac{t}{12(\nu+t\Omega_{g})^{2}},\Gamma_{j}\equiv G_{t}^{k_{j}(t)}$. We thus obtain the recursion 
\[
d_{j+1}\leq d_{j}-\frac{\Gamma_{j}^2}{M},\quad\Gamma_{j}\geq d_{j},
\]
and we can apply \cite[Prop. 2.4]{ZhaFre20} directly to the above recursion to obtain the estimates 
\[
d_{j}\leq \frac{M}{j},\quad \text{ and } \min\{\Gamma_{0},\ldots,\Gamma_{j}\}\leq \frac{2M}{j}.
\]
Since the algorithm terminates when we obtain an iterate with $\Gamma_{j} = \gap_{t}(x^{k_{j}(t)}_{t})\leq\eta$, it is sufficient to stop at the label $j$ is reached satisfying $\frac{2M}{j}\leq\eta$. Solving this for $j$ yields 
\begin{equation}\label{eq:ub_second_stage_iterations}j\leq \left\lceil \frac{24(\nu+t\Omega_{g})^{2}}{t\eta}\right\rceil.\end{equation}
Combining the bound
\eqref{eq:ub_second_stage_iterations} with $K_{t}(x^{0}_{t})$, 
we obtain the total complexity estimate postulated in Proposition 
\ref{prop:gap}.

\subsection{Analysis of the outer loop and proof of Theorem \ref{th:Complexity}}
\label{sec:outer}
Let $I\equiv I(\eta_{0},\sigma,\eps)=\lceil\frac{\log(2\eta_{0}/\eps)}{\log(1/\sigma)}\rceil$ denote the a-priori fixed number of updates of the accuracy and homotopy parameter. We set $\hat{x}_{i}\equiv x^{R_{i}}_{t_{i}}$, the last iterate of procedure $\CG(x^{0}_{t_{i}},\eta_{i},t_{i})$ satisfying $\Delta_{t_{i}}(\hat{x}_{i})\leq\gap_{t_{i}}(\hat{x}_{i})\leq\eta_{i}$. From this, using the definition of the gap function, we deduce that 
\begin{equation}\label{eq:g2}
g(\hat{x}_{i})-g(z^{\ast}(t_{i})) \leq \gap_{t_{i}}(\hat{x}_{i})+\frac{1}{t_{i}}F'(\hat{x}_{i})[z^{\ast}(t_{i})-\hat{x}_{i}]\leq \eta_{i}+\frac{\nu}{t_{i}}.
\end{equation}
Hence, using Lemma \ref{lem:pathfollowing}, we observe
\begin{equation}
\label{eq:final_error}
g(\hat{x}_{i})-\Opt=g(\hat{x}_{i})-g(z^{\ast}(t_{i}))+g(z^{\ast}(t_{i}))-\Opt\leq \eta_{i}+\frac{2\nu}{t_{i}}.
\end{equation}
Since $t_{i}=\frac{2\nu}{\eta_{i}}$, we obtain $g(\hat{x}_{i})-\Opt\leq  2\eta_{i}$. We next estimate the initial gap $\Delta_{t_{i+1}}(x^{0}_{t_{i+1}})=\Delta_{t_{i+1}}(\hat{x}_{i})$ incurred by our warm-start strategy. Observe that 
\begin{align*}
	V_{t_{i+1}}(x^{0}_{t_{i+1}})-V_{t_{i+1}}(z^{\ast}(t_{i+1}))&=V_{t_{i}}(x^{0}_{t_{i+1}})-V_{t_{i}}(z^{\ast}(t_{i+1}))\\
	&+\left(\frac{1}{t_{i+1}}-\frac{1}{t_{i}}\right)(F(x^{0}_{t_{i+1}})-F(z^{\ast}(t_{i+1})))\\
	&\leq V_{t_{i}}(x^{R_{i}}_{t_{i}})-V_{t_{i}}(z^{\ast}(t_{i}))\\
	&+\left(1-\frac{t_{i+1}}{t_{i}}\right)\left(V_{t_{i+1}}(x^{0}_{t_{i+1}})-V_{t_{i+1}}(z^{\ast}(t_{i+1}))\right)\\
	&+(1-\frac{t_{i+1}}{t_{i}})(g(z^{\ast}(t_{i+1}))-g(x^{0}_{t_{i+1}})),
\end{align*}
\sh{where the last transitions follows from $V_{t_{i}}(z^{\ast}(t_{i+1}))\geq V_{t_{i}}(z^{\ast}(t_{i}))$.}
Moreover,
$$
g(x^0_{t_{i+1}})-g(z^{\ast}(t_{i+1}))\leq g(x^0_{t_{i+1}})-\Opt\leq \eta_i+\frac{2\nu}{t_i}=\frac{4\nu}{t_{i}}.
$$
Since $t_{i+1}>t_{i}$, the above two inequalities imply
\begin{align*}
\frac{t_{i+1}}{t_{i}}[V_{t_{i+1}}(x^{0}_{t_{i+1}})-V_{t_{i+1}}(z^{\ast}(t_{i+1}))]\leq V_{t_{i}}(x^{R_{i}}_{t_{i}})-V_{t_{i}}(z^{\ast}(t_{i}))+\frac{t_{i+1}-{t_i}}{t_{i}^2}4\nu.
\end{align*}
Whence, 
\begin{equation}
	\label{eq:tDeltat_estimate_new}
	t_{i+1}\Delta_{t_{i+1}}(x^{0}_{t_{i+1}})\leq t_{i}\Delta_{t_{i}}(x^{R_{i}}_{t_{i}})+\frac{t_{i+1}-{t_i}}{t_{i}}4\nu\leq t_{i}\eta_{i}+\frac{t_{i+1}-{t_i}}{t_{i}}4\nu=\left(2\frac{t_{i+1}}{t_i}-1\right)2\nu.
\end{equation}

We are now in the position to estimate the total iteration complexity $\texttt{Compl}(x^{0},\eps,f):=\sum_{i=0}^{I}R_i \leq \sum_{i=0}^{I}R(x^{0}_{t_{i}},\eta_{i},t_{i})$ by using Proposition \ref{prop:gap}, and thereby prove Theorem \ref{th:Complexity}. We do so by using the updating regime of the sequences $\{t_{i}\}$ and $\{\eta_{i}\}$ explained in Section \ref{sec:algo}. These updating mechanisms imply 
\begin{align*}
	\sum_{i=1}^{I}R_{i}&\leq \sum_{i=1}^{I}5.3\left(\nu+2\nu\left(\frac{2t_i}{t_{i-1}}-1\right)+t_{i}\Omega_{g}\right)\log\left(10.6\left(2\nu\left(\frac{2t_i}{t_{i-1}}-1\right)\right)\right)\\
	&+\sum_{i=1}^{I}\frac{12}{\nu}(\nu+t_{i}\Omega_{g})^{2}\\
	&\leq \log(21.2(\nu(2/\sigma-1)))\sum_{i=1}^{I}5.3(\nu(4/\sigma-1)+t_{i}\Omega_{g}))+\sum_{i=1}^{I}\frac{24}{\nu}(\nu^{2}+t^{2}_{i}\Omega^{2}_{g})\\
	&\leq {24}{\nu}\times I+21.2\nu/\sigma\log(42.4\nu/\sigma)\textcolor{black}{\times I}+\textcolor{black}{5.3}\log(42.4\nu/\sigma)\Omega_{g}\sum_{i=1}^I t_{i}+\frac{24\Omega_{g}^{2}}{\nu}\sum_{i=1}^{I}t_{i}^{2}\\
	&\leq \textcolor{black}{\frac{24\nu\log(2\eta_{0}/\eps)}{\log(1/\sigma)}}+\log\left(\frac{42.4\nu}{\sigma}\right)\frac{21.2\nu}{\sigma}\textcolor{black}{\frac{\log(2\eta_{0}/\eps)}{\log(1/\sigma)}}\\
	&+\log\left(\frac{42.4\nu}{\sigma}\right)\frac{\textcolor{black}{21.2}\nu\Omega_{g}}{\eps(1-\sigma)}+\frac{384\Omega_{g}^{2}\nu}{\eps^{2}(1-\sigma^{2})}.
\end{align*}

In deriving these bounds, we have used the \sh{definition of 
 $I$ and the } following estimates: 
\begin{align*}
\sum_{i=1}^{I}t_{i}=t_{0}\sigma^{-1}\sum_{i=0}^{I-1}(1/\sigma)^{i}\leq t_{0}\frac{\sigma^{-I}}{1-\sigma}\leq \frac{4\nu}{\eps(1-\sigma)},
\end{align*}
and
\begin{align*}
\sum_{i=1}^{I}t_{i}^{2}=t^{2}_{0}\sum_{i=1}^{I}(1/\sigma)^{2i}=\frac{t^{2}_{0}}{\sigma^{2}}\sum_{i=0}^{I-1}(1/\sigma^{2})^{i}\leq \frac{16\nu^{2}}{\eps^{2}(1-\sigma^{2})}.
\end{align*}
It remains to bound the complexity at $i=0$. We have 
\begin{align*}
R_{0}&\leq 5.3(\nu+t_{0}\Delta_{t_{0}}(x^{0}_{t_{0}})+t_{0}\Omega_{g})\log(10.6t_{0}\Delta_{t_{0}}(x^{0}))+\frac{12}{\nu}(\nu+t_{0}\Omega_{g})^{2}\\
&\leq 5.3(\nu+F(x^{0})-F(z^{\ast}(t_{0}))+2t_{0}\Omega_{g})\log\left(10.6(F(x^{0})-F(z^{\ast}(t_{0}))+t_{0}\Omega_{g})\right)\\
&+\frac{24}{\nu}(\nu^{2}+t^{2}_{0}\Omega^{2}_{g}).
\end{align*}
Since $t_{0}=\frac{\nu}{\Omega_{g}}$ and $\eta_{0}=2\Omega_{g}$, we have 
$$
R_{0}\leq 5.3(3\nu+F(x^{0})-F(z^{\ast}(t_{0})))\log\left(10.6(\nu+F(x^{0})-F(z^{\ast}(t_{0})))\right)+48\nu.
$$
Adding the two gives the total complexity bound
\begin{align*}
\texttt{Compl}(x^{0},\eps,f)&\leq 5.3(3\nu+F(x^{0})-F(z^{\ast}(t_{0})))\log\left(10.6(\nu+F(x^{0})-F(z^{\ast}(t_{0})))\right)+48\nu\\
&+\frac{\nu\log(\sh{4\Omega_g}/\eps)}{\log(1/\sigma)}\left[24+\log\left(\frac{42.4\nu}{\sigma}\right)\frac{\sh{21.2}}
{\sigma}\right]\\
&+\log\left(\frac{42.4\nu}{\sigma}\right)\frac{21.2\nu\Omega_{g}}{\eps(1-\sigma)}+\frac{384\Omega_{g}^{2}\nu}{\eps^{2}(1-\sigma^{2})}.\\
&=\sh{\tilde O}\left(\frac{\Omega_{g}^{2}\nu}{\eps^{2}(1-\sigma^{2})}\right).
\end{align*}

\section{Experimental results}
\label{sec:numerics}
%

In this section, we give \sh{two} examples covered by the problem template \eqref{eq:Opt}, and report the results of numerical experiments that illustrate the practical performance and scalability of our method.
We implement Algorithm \ref{alg:Homotopy} using procedure $\CG$ and $\LCG$ as solvers for the inner loops. We compare the results of our algorithm to two benchmarks: (1) an interior point solution of the problem by SDPT3 \cite{Toh:1999aa}, and (2) the solution obtained by CGAL \cite{YurTroFerUdeCev21}. \sh{Our performance measure for these experiments is the \emph{relative optimality gap}, which given the value of the objective of the solution computed for each method $M$ and iteration $i$, $g^{M}_i$, is computed by ${|g^M_i-g^{\SDPT3}|}/{|g^{\SDPT3}|}$, where $g^{\SDPT3}$ is the interior point solution.}
The full description of the setup for these experiments is available in Appendix \ref{sec:exp}. 
\sh{A key takeaway from these numerical experiments is that $\CG$ and $\LCG$ are not influenced by the problem's scaling and are therefore well-suited to solving ill-posed problems with differently scaled constraints.}
\subsection{Estimating the mixing time of a Markov chain}
Consider a continuous-time Markov chain on a weighted graph, with edge weights representing the transition rates of the process. Let $\bP(t)=e^{-Lt}$ be the stochastic semi-group of the process, where $L$
	is the graph Laplacian. From $L\1=0$ it follows that $\frac{1}{n}\1$ is a stationary distribution of the process. From \cite{DiaStr91} we have the following bound on the total variation distance of the time $t$ distribution of the Markov process and the stationary distribution
	$
	\sup_{\pi}\norm{\pi \bP(t)-\frac{1}{n}\1}_{\text{TV}}\leq\frac{1}{2}\sqrt{n}e^{-\lambda t}.
	$
	The quantity $T_{\text{mix}}=\lambda$ is called the \emph{mixing time}. The constant $\lambda$ is the second-largest eigenvalue of the graph Laplacian $L$. To bound this eigenvalue, we follow the SDP-strategy laid out in \cite{SunBoyDiacXia06}, and in some detail described in Appendix \ref{sec:exp}.\\
	In the experiments, we generated random connected undirected graphs with $100-800$ nodes and $10^3-10^4$ edges, and for each edge $\{i,j\}$ in the graph, we generated a random weight uniformly in $[0,1]$. 
	Figure \ref{fig:Mixing_GapVsIter_Selected} illustrates the relative optimality gap for the generated results by each method (and the dependence of $\CG$ and $\LCG$ on the starting accuracy $\eta_0$), $\CGAL$ only appears in the right most graphs\footnote{More detailed results are available in Appendix \ref{sec:exp}.}.
    \sh{We conclude that, despite $\LCG$ being generally costly per iteration,} it exhibits the best computational performance both in the number of iterations and in time. \sh{Moreover, for all of the datasets, the primal-dual method $\CGAL$ generated solutions that were far from being feasible, and was not able to reduce the feasibility gap even after 1000 seconds or 10000 iterations, resulting in very poor relative optimality gap.
    Our conjecture is that the slow convergence rate of $\CGAL$ with respect to feasibility is due to the different scaling for the right-hand-side of the constraints, which results in a relative feasibility gap which is very large for constraints with small right-hand-side. To test this conjecture , in the next section we present further experiments testing the performance of the methods as a function of the right-hand-side scaling.\footnote{We thank an anonymous referee for suggesting this experiment.}} 
\begin{figure}[h]%
		\centering
		\vspace{2em}
		\includegraphics[width=0.95\linewidth]{./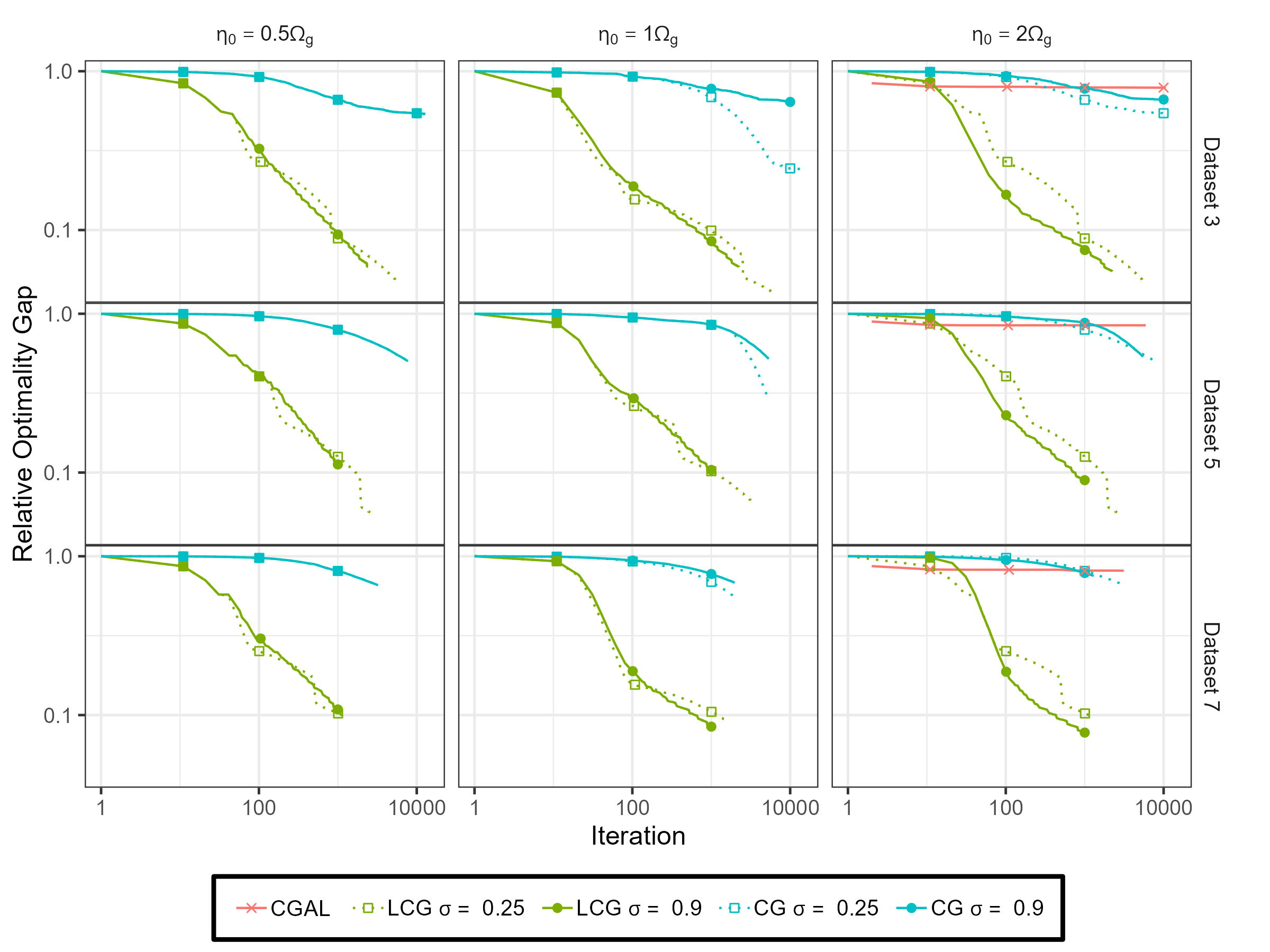}
		\caption{Mixing time problem: relative gap from SDPT3 solution vs. iteration.}%
		\label{fig:Mixing_GapVsIter_Selected}%
\end{figure}

\subsection{Randomly Scaled SDP}\label{sec:SRS_main}
In order to further investigate the robustness of $\CG$, $\LCG$ and $\CGAL$ to constraint scaling, we've created a synthetic data set for a general SDP with inequality constraints.
The Synthetic Random Scaling (SRS) problem is formulated as follows:
\begin{equation}\label{eq:srs}\tag{SRS}
	\max \inner{\bC,\bX}\text{ s.t.: }\inner{\bA_i,\bX}\leq \gi{b_i},i\in\{1,2,\ldots,m\},\bX\in\symm^{n}_{+},\tr(\bX)\leq 1,
\end{equation}
where both the objective function coefficient matrix $\gi{\bC}$ and, the constraint coefficient matrices $\gi{\bA_i}$, for $i = 1, \dots, m$, are randomly generated PSD matrices with unit Frobenius norm.
The coefficients $b_i$ are defined as $b_i = \frac{u_i}{i^{p/2}} \inner{\gi{\bA_i}, \gi{\bX_0}}$, where $u_i \in [\frac{1}{2}, 1]$ is a uniformly distributed random number, $p \in \{0, 1, 2\}$, and $\gi{\bX_0}$ is a randomly generated PSD matrix with unit trace.
Thus, parameter $p$ controls the scaling of the right-hand-side of the constraints. Specifically, when $p=0$ we expect all $b_i$ to have the same order of magnitude, while for $p=2$ constraint $b_m=O(1/m)b_1$.
As we observed for the Mixing problem in Section~\ref{sec:Mixing}, our expectation is for $\CG$ and $\LCG$ to be impervious to this right-hand-side scaling, and for the performance of $\CGAL$ to deteriorate as $p$ increases.
To compare our method with CGAL, the CGAL output was adjusted to obtain a feasible solution.
The details of this procedure are provided in Appendix \ref{sec:SRS}.

\begin{figure}[h!]
\centering
\includegraphics[scale=0.37]{./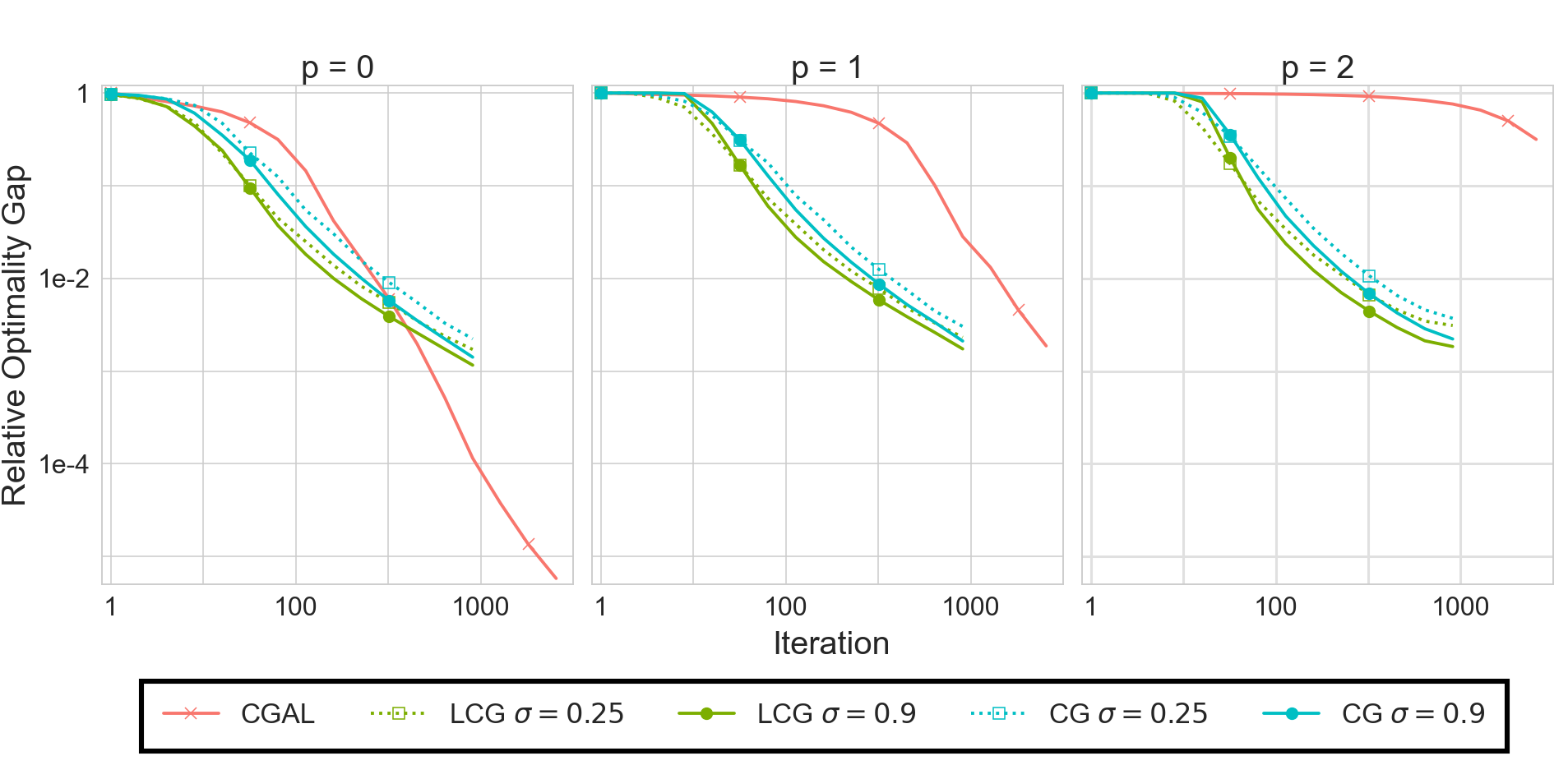}
\caption{Randomly scaled SDP problem: relative optimality gap vs. iterations for several parameter choices (averaged over 30 instances for each $p$)}
\label{fig:SRS_gap_iter}
\end{figure}

\begin{figure}[h!]
\centering
\includegraphics[scale=0.37]{./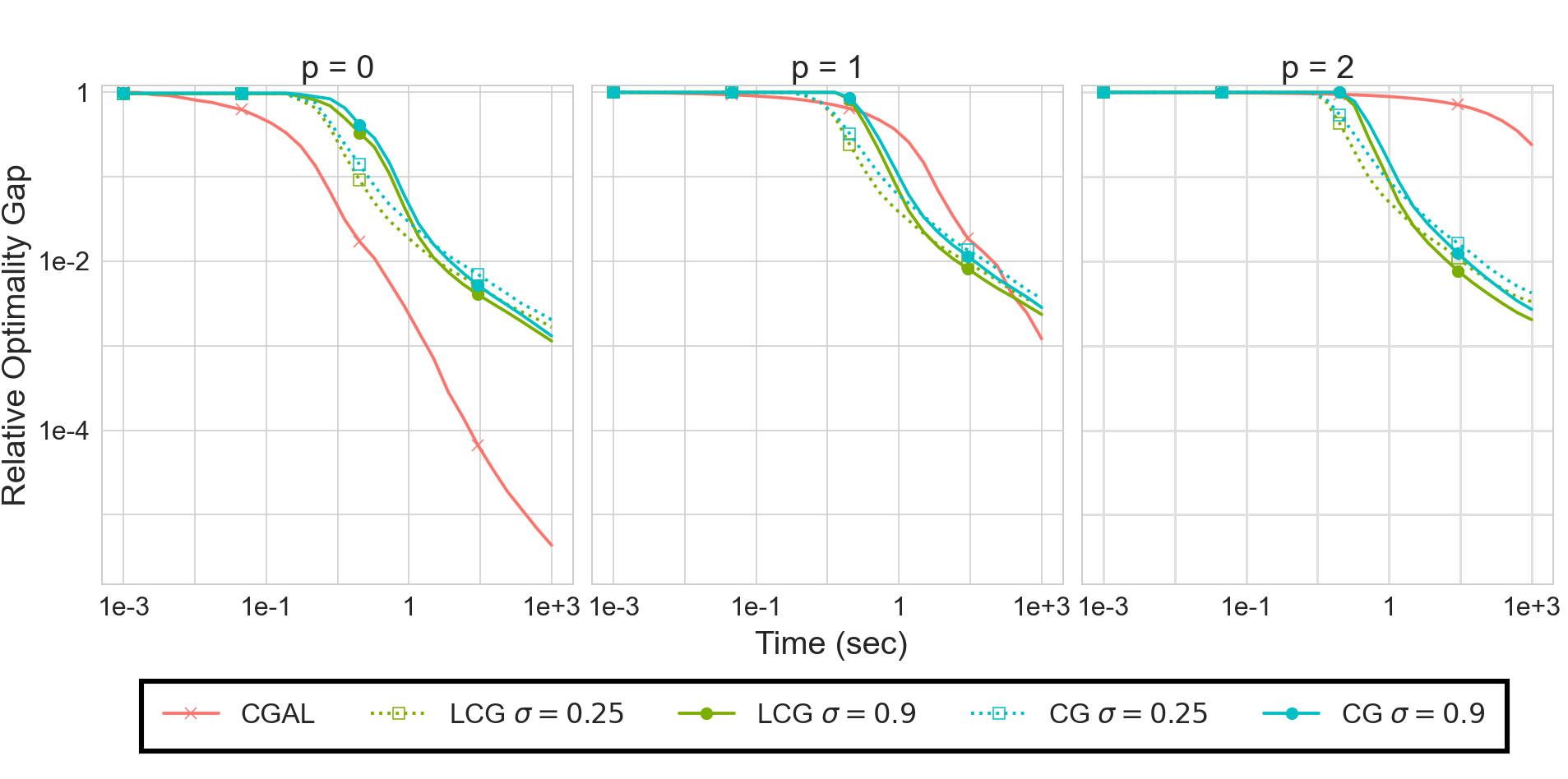}
\caption{Randomly scaled SDP problem: relative optimality gap vs. time for several parameter choices (averaged over 30 instances for each $p$)}
\label{fig:SRS_gap_time}
\end{figure}

For the numerical experiments, we have chosen parameters $n=100$ and $m=100$. Figures \ref{fig:SRS_gap_iter} and \ref{fig:SRS_gap_time} show the performance of our methods in terms of the relative optimality gap for problem \eqref{eq:srs}, where the ``optimal'' solution is computed using the SDPT3 solver, as described in Appendix \ref{sec:SRS}.
Since the dataset consists of homogeneous data points, the plots present aggregated results over 30 instances for each value of $p$.
Figure \ref{fig:SRS_gap_iter} illustrates the performance of each algorithm in terms of \sh{relative optimality gap} 
over iteration count, while Figure \ref{fig:SRS_gap_time} presents the performance in terms of runtime, within a time limit of 1000 seconds.
The plots indicate that, while in the initial stages of running $\CG$ and $\LCG$ with a smaller value of $\sigma$ ($\sigma = 0.25$) are superior, larger values ($\sigma = 0.9$) achieve slightly better overall results. Furthermore, the line search-based method $\LCG$ consistently achieves better iteration and time complexity performances than $\CG$.

Importantly, the plots confirm the conjecture that the performance of $\CGAL$ deteriorates as $p$ increases, our methods show consistent
performance across different $p$ values. In fact, $\CGAL$ has inferior performance for ill-scaled problems ($p=2$) compared to $\CG$ and $\LCG$, leading to an average optimality gap of up to two orders of magnitude larger.
Although Figure~\ref{fig:SRS_gap_iter} shows that $\CGAL$ has iteration complexity comparable to or worse than $\CG$ and $\LCG$ across all values of $p$, its low-cost iterations can make it competitive in terms of time complexity, as shown in Figure~\ref{fig:SRS_gap_time}.
Specifically, $\CGAL$ is able to perform more iterations leading to superior time complexity when $p=0$.
However, when $p=1$ and especially $p=2$, $\CGAL$'s iterations, while still inexpensive, fail to decrease the solution infeasibility, leading to limited improvement.
A detailed analysis of the numerical experiments is provided in Appendix \ref{sec:SRS}.

\section{Conclusions}
\label{sec:conclusions}
%

Solving large scale conic-constrained convex programming problems is still a challenge in mathematical optimization and machine learning. In particular, SDPs with a massive amount of linear constraints appear naturally as convex relaxations of combinatorial optimization problems. For such problems, the development of scalable algorithms is a very active field of research \cite{YurTroFerUdeCev21,ElbMakNaj22}. In this paper, we introduce a new path-following/homotopy strategy to solve such massive conic-constrained problems, building on recent advances in projection-free methods for self-concordant minimization \cite{ZhaFre20,FWICML20,DvuShtSta22}. Our theoretical analysis and numerical experiments scheme show that the method is competitive with state-of-the-art solvers in terms of its iteration complexity and gives promising results in practice. Future work will focus on improvements of our deployed \ac{CndG} solver to accelerate the subroutines. 
\bibliographystyle{plainnat}
\bibliography{mybib}

\appendix
\section{Modifications due to inexact oracles}
\label{app:Inexact}
%
In this appendix we give the missing proofs of the theoretical statements in Sections \ref{sec:extensions} and \ref{sec:inexact-path-following}.  
Consider the function $\tilde{\ce}_{t}(x):=\norm{\tilde{s}_{t}(x)-x}_{x}$, where $\tilde{s}_{t}(x)$ is any member of the set $\scrL^{(\delta,\theta)}_{g}(\frac{1}{t}\nabla F(x))$ (cf. Definition \ref{def:ILMO}). It is easy to see that \ref{lem:e-bound} translates to the case with inexact oracle feedback as
\begin{equation}\label{eq:tildee-bound}
t^{-1}\tilde{\ce}_{t}(x)\leq \nu/t+\widetilde{\gap}_{t}(x)+\Omega_{g}.
\end{equation}
Moreover, one can show that 
\begin{equation}\label{eq:e-LB}
\tilde{\ce}_{t}(x)\geq \frac{t}{\sqrt{\nu}}\left(\widetilde{\gap}_{t}(x)-\Omega_{g}\right).
\end{equation}

\subsection{Proof of Proposition \ref{prop:DeltaInexact}}
The proof of this proposition mimics the proof of the exact regime (cf. Proposition \ref{prop:Delta}), with some important twists. We split the analysis into two phases, bound the maximum number of iterations the algorithms can spend on these phases, and add these numbers up to obtain the worst-case upper bound.

First, for a round $k$ of $\texttt{ICG}(x,t,\eta,\delta,\theta)$, we use the abbreviations  
$$
\tilde{s}_{t}^{k}=\tilde{s}_{t}(x^{k}_{t}),\; \tilde{\ce}_{t}^{k}\equiv\norm{\tilde{s}_{t}^{k}-x^{k}_{t}}_{x^{k}_{t}},\; \tilde{G}^{k}_{t}\equiv \widetilde{\gap}_{t}(x^{k}_{t}),\;\tilde{\alpha}_{t}^{k}\equiv \tilde{\alpha}_{t}(x^{k}_{t}).
$$
We further redefine the phases as $\K_{I}(t):=\{k\in\N\vert \widetilde{\gap}_{t}(x^{k}_{t})>\ca_{t}\}$ (phase I) and $\K_{II}(t):=\{k\in\N\vert\widetilde{\gap}_{t}(x^{k}_{t})\leq \ca_{t}\}$ (phase II), where $\ca_{t}:=\frac{\nu}{t}+\Omega_{g}$.

Note that in the analysis we use the inequality $\Delta_{t}(x^{k}_{t}) >\eta + \theta$ since our goal is to estimate the number of iterations to achieve $\Delta_{t}(x^{k}_{t}) \leq \eta + \theta$. Moreover, if the stopping condition $\widetilde{\gap}_{t}(x^{k}_{t}) \equiv \tilde{G}^{k}_{t} \leq \eta \delta $ holds, we obtain by the definition of the \ac{ILMO} and inequality \eqref{eq:relationGap} that 
\begin{equation}
\label{eq:ILMO_prop_Delta_proof_0}
\eta \delta \geq \widetilde{\gap}_{t}(x^{k}_{t})\geq\delta(\gap_{t}(x^{k}_{t})-\theta)\geq \delta(\Delta_{t}(x^{k}_{t})-\theta),
\end{equation}
which implies that $\Delta_{t}(x^{k}_{t}) \leq \eta + \theta$. Thus, in the analysis we also use the inequality $\tilde{G}^{k}_{t} > \eta \delta >0$.
\sh{We start by showing that the sequence $\{\Delta_t(x_t^k)\}_{k\in \N}$ is monotonically decreasing.}

\sh{\paragraph{Monotonicity of $\Delta_t(x^t_k)$}  Consider $k\in\K_{I}(t)$. 
Combining the condition $\tilde{G}^{k}_{t} > \frac{\nu}{t}+\Omega_{g}$ of this phase with \eqref{eq:e-LB}, we deduce  that $\tilde{\ce}^{k}_{t}\geq \sqrt{\nu}\geq 1$. This in turn implies $\frac{t\tilde{G}^{k}_{t}}{\tilde{\ce}^{k}_{t}(\tilde{\ce}^{k}_{t}+t\tilde{G}^{k}_{t})}<\frac{t\tilde{G}^{k}_{t}}{\tilde{\ce}^{k}+t \tilde{G}^{k}_{t}}<1$. Hence, at this iteration algorithm $\texttt{ICG}(x,t,\eta,\delta,\theta)$ chooses the step size $\tilde{\alpha}_{t}^{k}=\frac{t\tilde{G}^{k}_{t}}{\tilde{\ce}^{k}_{t}(\tilde{\tilde{\ce}}^{k}_{t}+t\tilde{G}^{k}_{t})}$. The per-iteration reduction of the potential function can then be estimated just as in the derivation of \eqref{eq:V_t_progress_1}:
\begin{equation}
\label{eq:ILMO_prop_Delta_proof_1}
V_{t}(x^{k+1}_{t})\leq V_{t}(x^{k}_{t})-\frac{1}{t}\omega\left(\frac{t\tilde{G}^{k}_{t}}{\tilde{\ce}_{t}^{k}}\right).
\end{equation}
Note that the r.h.s. is well-defined since $\tilde{G}^{k}_{t} > \eta \delta >0$ as the stopping condition is not yet achieved. The above inequality, using the definition of the potential function gap as $\Delta_{t}(x^{k}_{t})=V_{t}(x^{k}_{t})-V_{t}(z^{\ast}(t))$, readily yields \begin{equation}\label{eq:inexact_stage_I_monotone}\Delta_{t}(x^{k}_{t})-\Delta_{t}(x^{k+1}_{t})\geq \frac{1}{t}\omega\left(\frac{t\tilde{G}^{k}_{t}}{\tilde{\ce}^{k}_{t}}\right)\geq 0.
\end{equation}

Now, consider $k\in\K_{II}(t)$, which means that $\tilde{G}_{t}^{k}\leq \frac{\nu}{t}+\Omega_{g}$. There are two cases we need to consider, depending on the step size $\tilde{\alpha}_{t}^{k}$. We first consider the case where $\tilde{\alpha}_{t}^{k}=1$. Then $t\tilde{G}_{t}^{k}\geq\tilde{\ce}^{k}_{t}(\tilde{\ce}_{t}^{k}+t\tilde{G}_{t}^{k})$, which implies $\tilde{\ce}_{t}^{k}\in(0,1)$. From there we conclude that $\tilde{G}_{t}^{k}\geq \frac{(\tilde{\ce}_{t}^{k})^{2}}{t(1-\tilde{\ce}_{t}^{k})}$, and, by \eqref{eq:upperV} and \eqref{eq:boundomega}, that
\begin{align*}
V_{t}(x^{k+1}_{t})&\leq V_{t}(x^{k}_{t})-\tilde{G}_{t}^{k}+\frac{1}{t}\omega_{\ast}(\tilde{\ce}_{t}^{k})\\
&\leq V_{t}(x^{k}_{t})-\tilde{G}_{t}^{k}+\frac{1}{t}\frac{(\tilde{\ce}^{k}_{t})^{2}}{2(1-\tilde{\ce}_{t}^{k})}\\
&\leq V_{t}(x^{k}_{t})-\frac{\tilde{G}^{k}_{t}}{2}.
\end{align*}
Therefore, using again that $\tilde{G}_{t}^{k}\leq \frac{\nu}{t}+\Omega_{g}$, we have
\begin{equation}
\label{eq:ILMO_prop_Delta_proof_7}
\Delta_{t}(x^{k}_{t})-\Delta_{t}(x^{k+1}_{t})\geq \frac{\tilde{G}^{k}_{t}}{2} \geq\frac{(\tilde{G}^{k}_{t})^{2}}{2(\nu/t+\Omega_{g})}\geq \frac{t(\tilde{G}^{k}_{t})^{2}}{12(\nu+t\Omega_{g})^{2}}\geq 0,
\end{equation}
where the last inequality follows $\tilde{G}^{k}_{t} > \eta \delta >0$. 

Consider now the case where $\tilde{\alpha}_{t}^{k}<1$, i.e.,  $\tilde{\alpha}_{k}=\frac{t\tilde{G}^{k}_{t}}{\tilde{\ce}^{k}_{t}(\tilde{\ce}^{k}_{t}+t\tilde{G}^{k}_{t})}$.
Then, using this stepsize, just as in the derivation of \eqref{eq:V_t_progress_1} and \eqref{eq:ILMO_prop_Delta_proof_1}, we get 
\begin{equation}
\label{eq:ILMO_prop_Delta_proof_8}
V_{t}(x^{k+1}_{t})\leq V_{t}(x^{k}_{t})-\frac{1}{t}\omega\left(\frac{t\tilde{G}^{k}_{t}}{\tilde{\ce}_{t}^{k}}\right).
\end{equation}
Again, since $\tilde{G}_{t}^{k}\leq \frac{\nu}{t}+\Omega_{g}$ for $k\in\K_{II}(t)$, we have $\frac{\tilde{G}^{k}_{t}}{\tilde{G}^{k}_{t}+\nu/t+\Omega_{g}} \leq \frac{1}{2}$. 
Using the monotonicity of $\omega(\tau)$, inequality \eqref{eq:tildee-bound}, and that $\omega(\tau) \geq \frac{\tau^2}{3}$ for $\tau\leq 1/2$ \cite[Prop. 1]{ZhaFre20},  we can develop the bound
$$
\frac{1}{t}\omega\left(\frac{t\tilde{G}^{k}_{t}}{\tilde{\ce}_{t}^{k}}\right)=\frac{1}{t}\omega\left(\frac{\tilde{G}^{k}_{t}}{\tilde{\ce}_{t}^{k}/t}\right)\geq \frac{1}{t}\omega\left(\frac{\tilde{G}^{k}_{t}}{\tilde{G}^{k}_{t}+\nu/t+\Omega_{g}}\right)\geq\frac{1}{3t}\left(\frac{\tilde{G}^{k}_{t}}{\tilde{G}^{k}_{t}+\nu/t+\Omega_{g}}\right)^{2}.
$$
This and \eqref{eq:ILMO_prop_Delta_proof_8}, again by $\tilde{G}_{t}^{k}\leq \frac{\nu}{t}+\Omega_{g}$, implies that also in this regime we obtain the estimate 
\begin{equation}
\label{eq:ILMO_prop_Delta_proof_9}
\Delta_{t}(x^{k}_{t})-\Delta_{t}(x^{k+1}_{t})\geq \frac{t(\tilde{G}^{k}_{t})^{2}}{12(\nu+t\Omega_{g})^{2}}.
\end{equation}
As we see, the above inequality holds on the whole phase II, both when $\tilde{\alpha}_{t}^{k}<1$ and when $\tilde{\alpha}_{t}^{k}=1$. In particular, since $\tilde{G}^{k}_{t} > \eta \delta >0$, together with \eqref{eq:ILMO_prop_Delta_proof_7} imply that  $\{\Delta_{t}(x^{k}_{t})\}_{k\in\N}$ is monotonically decreasing.}

\sh{We now move to the analysis of the iteration complexity.}
\paragraph{Analysis for Phase I} \sh{Let let $\{k_{i}\}_{i\in\N_{0}}$ denote the increasing set of indices enumerating the elements of the set $\K_{I}(t)$. Since $k_i\in \K_{I}(t)$, it holds that $\tilde{G}^{k_i}_{t}>\frac{\nu}{t}+\Omega_g$ and so $\frac{t\tilde{G}^{k}_{t}}{\nu+t\tilde{G}^{k}_{t}+t\Omega_{g}}>\frac{1}{2}$. Together with \eqref{eq:tildee-bound}, this implies 
\[
\frac{t\tilde{G}_{t}^{k}}{\tilde{\ce}^{k}_{t}}\geq\frac{\tilde{G}^{k}_{t}}{\nu/t+\tilde{G}_{t}^{k}+\Omega_{g}}=\frac{t\tilde{G}^{k}_{t}}{\nu+t\tilde{G}_{t}^{k}+t\Omega_{g}}>\frac{1}{2}.
\]
Using the monotonicity of $\omega(\cdot)$ and that $\omega(\tau) \geq \frac{\tau}{5.3}$ for $\tau\geq 1/2$ \cite[Prop. 1]{ZhaFre20}, we further obtain
\begin{equation}
\label{eq:ILMO_prop_Delta_proof_2}
\Delta_{t}(x^{k_i}_{t})-\Delta_{t}(x^{k_i+1}_{t})\geq \frac{1}{t}\omega\left(\frac{t\tilde{G}^{k_i}_{t}}{\tilde{\ce}^{k_i}_{t}}\right) \geq \frac{\tilde{G}^{k_i}_{t}}{5.3(\nu+t\Omega_{g}+t\tilde{G}_{t}^{k_i})}.
\end{equation}}
By our definition of the inexact oracle, inequalities \eqref{eq:relationGap} and \sh{$\Delta_{t}(x^{k_i}_{t}) > \eta + \theta$, we see that
\[
\tilde{G}^{k_i}_{t} \equiv\widetilde{\gap}_{t}(x^{k_i}_{t})\geq\delta(\gap_{t}(x^{k_i}_{t})-\theta) \geq \delta(\Delta_{t}(x^{k_i}_{t})-\theta) > \eta >0. 
\]}
Combining this with \eqref{eq:ILMO_prop_Delta_proof_2} and the fact that the function $x\mapsto \frac{x}{a+tx}$ is strictly increasing for $a,t >0$ and $x>0$, we obtain
\sh{\begin{equation}
\label{eq:ILMO_prop_Delta_proof_3}
\Delta_{t}(x^{k_i}_{t})-\Delta_{t}(x^{k_{i+1}}_{t})
\geq \frac{\delta(\Delta_{t}(x^{k_i}_{t})-\theta)}{5.3(t\delta(\Delta_{t}(x^{k_i}_{t})-\theta)+\nu+t\Omega_{g})}  \geq \frac{\delta(\Delta_{t}(x^{k_i}_{t})-\theta)}{5.3(t\delta(\Delta_{t}(x^{0}_{t})-\theta)+\nu+t\Omega_{g})},
\end{equation}
where in the first and last inequalities follow from $\{\Delta_{t}(x^{k}_{t})\}_{k\in\N_0}$ being monotonically decreasing and $\Delta_{t}(x^{k_i}_{t})> \eta + \theta \geq \theta$.} Rearranging the terms and defining $q:=\frac{\delta}{5.3(t\delta(\Delta_{t}(x^{0}_{t})-\theta)+\nu+t\Omega_{g})}$, which we know to be in $(0,1)$ since $\nu\geq 1$ and $\Delta_{t}(x^{0}_{t}) > \eta + \theta$, it follows from the previous display that 
\begin{equation}
\label{eq:ILMO_prop_Delta_proof_4}
\sh{\Delta_{t}(x^{k_{i+1}}_{t})\leq  \Delta_{t}(x^{k_i}_{t}) (1-q) + q \theta \leq  \Delta_{t}(x^{0}_{t})(1-q)^{i}+\theta \leq \exp(-iq)\Delta_{t}(x^{0}_{t})+\theta.}
\end{equation}

On the other hand, we have
\sh{$$
\Delta_{t}(x^{k_i}_{t}) \geq \Delta_{t}(x^{k_i}_{t}) - \Delta_{t}(x^{k_i+1}_{t}) \geq \frac{\tilde{G}^{k_i}_{t}}{5.3(\nu+t\Omega_{g}+t\tilde{G}_{t}^{k_i})}>\frac{1}{10.6 t }
$$
since $\frac{t\tilde{G}^{k_i}_{t}}{\nu+t\tilde{G}^{k_i}_{t}+t\Omega_{g}}>\frac{1}{2}$ for $k_i\in\K_{I}(t)$.}
Moreover, we know that \sh{$\Delta_{t}(x^{k_i}_{t}) > \eta + \theta$}, which gives us in combination with \eqref{eq:ILMO_prop_Delta_proof_4} 
\begin{equation}
\label{eq:ILMO_prop_Delta_proof_5}
\max\left\{\frac{1}{10.6 t }, \eta + \theta \right\} \leq \sh{\Delta_{t}(x^{k_i}_{t}) \leq \exp(-iq)\Delta_{t}(x^{0}_{t})}+\theta.
\end{equation}
Solving w.r.t. \sh{$i$}, we see that the number of iterations in $\K_{I}(t)$ is bounded as 
\begin{equation}
\label{eq:ILMO_prop_Delta_proof_6}
K_{t}(x_{t}^{0}): = \left\lceil \frac{5.3(t\delta(\Delta_{t}(x^{0}_{t})-\theta)+\nu+t\Omega_{g})}{\delta}\log\left( \min\left\{\frac{10.6 t \Delta_{t}(x^{0}_{t})}{(1-10.6 t \theta)_+}, \frac{\Delta_{t}(x^{0}_{t})}{\eta} \right\}\right)  \right\rceil.
\end{equation}

\paragraph{Analysis for Phase II} \sh{Let let $\{\ell_{j}\}_{i\in\N_{0}}$ denote the increasing set of indices enumerating the elements of the set $\K_{II}(t)$.
Let $N_{t}(x_{t}^{0})$ be a bound on the number of iterations until $\Delta_{t}(x^{k}_{t})\leq\eta+\theta$ belonging to phase II. We will show that setting $N_t(x_t^0)=\left\lceil \frac{12(\nu+t\Omega_{g})^{2}}{t\delta^{2}}\left(\frac{1}{\eta}-\frac{1}{\Delta_{t}(x^{0})-\theta}\right)_{+}\right\rceil$  is such a bound. Indeed, if $\Delta_{t}(x^{N_t(x^0_t)+1}_{t})\leq \theta$ then the proof is done, otherwise $\Delta_t(x_t^{\ell_j})\geq \theta$ for all $j\leq N_t(x_t^0)+1$ from monotonicity of the sequence \sh{$\{\Delta_{t}(x^{\ell_j}_{t})\}_{\ell_j\in\K_{II}(t)}$}.} 
Calling $\Delta_{t}^{j}\equiv\sh{\Delta(x^{\ell_{j}(t)}_{t})}$ and using the monotonicity of this  sequence, we see from \eqref{eq:ILMO_prop_Delta_proof_9} and \eqref{eq:ILMO_prop_Delta_proof_0} that, for $j=1,\ldots,N_{t}-1(x_{t}^{0})$,
$$
\Delta^{j}_{t}-\Delta^{j+1}_{t}\geq \Delta^{j}_{t}-\Delta_{t}(x^{\sh{\ell_{j}}(t)+1}_{t})\geq \frac{t(\tilde{G}^{\sh{\ell_{j}(t)}}_{t})^{2}}{12(\nu+t\Omega_{g})^{2}} \geq \frac{t\delta^{2}(\Delta^{j}_{t}-\theta)(\Delta^{j+1}_{t}-\theta)}{12(\nu+t\Omega_{g})^{2}}.
$$
This implies that %
for $j=1,\ldots,\sh{N_{t}(x_{t}^{0})}$ that
$$
\frac{1}{\Delta^{j+1}_{t}-\theta}-\frac{1}{\Delta^{j}_{t}-\theta}\geq\frac{t\delta^{2}}{12(\nu+t\Omega_{g})^{2}}, \quad j=1,\ldots,N_{t}(x_{t}^{0}).
$$
Telescoping this expression \sh{together with $\Delta_t^1\leq \Delta_t(x^0_t)$, which stems from the entire sequence $\{\Delta_t(x^k_t)\}_{k\in\N_0}$ being monotonically decreasing},  we see that 
$$
 \frac{1}{\Delta^{N_{t}(x_{t}^{0})+1}_{t}-\theta} 
\geq \sh{\frac{(N_{t}(x_{t}^{0})-1) t\delta^{2}}{12(\nu+t\Omega_{g})^{2}} + \frac{1}{\Delta_{t}^1-\theta}}
\geq \frac{(N_{t}(x_{t}^{0})-1) t\delta^{2}}{12(\nu+t\Omega_{g})^{2}} + \frac{1}{\Delta_{t}(x_{t}^{0})-\theta}\geq \frac{1}{\eta},
$$ is satisfied for $N_t(x^0_t)$.

\sh{We can therefore conclude that the number of iterations required to obtain $\Delta_t(x_t^k)\leq \eta+\theta$ does not exceed $N_t(x^0_t)+K_t(x^0_t)+1$ which is exactly the value of $\tilde{N}(x^{0},\eta,\delta,\theta)$, thus concluding the proof.}

\subsection{Proof of Proposition \ref{prop:GapInexact}}

Our analysis concentrates on the behaviour of the algorithm in Phase II since the estimate for number of iterations in Phase I still holds. Here we split the iterates of the set $\K_{II}(t)=\{k_{1}(t),\ldots,k_{p}(t)\}\equiv \{k_{1},\ldots,k_{p}\}$ into two regimes so that $\K_{II}(t)=H_{t}\cup L_{t}$ with $L_{t}:=\{k\in\K_{II}(t)\vert \Delta_{t}(x^{k}_{t})<\theta\}$ and $H_{t}:=\{k\in\K_{II}(t)\vert \Delta_{t}(x^{k})\geq \theta\}$. By monotonicity of the potential function gap, we know that the indices of of $H_{t}$ ('high states') precede those of $L_{t}$ ('low states'). We bound the size of each of these subsets in order to make $\widetilde{\gap}_{t}$ smaller than $\eta\delta$. Since the potential function gap is monotonically decreasing, we can organise the iterates on phase II as $H_{t}=\{k_{1},\ldots,k_{q}\}$ and accordingly, $L_{t}=\{k_{q+1},\ldots,k_{p}\}$ for some $q\in\{1,\ldots,p\}$. 

\paragraph{Bound on $H_{t}$:}  
For $k_{j}(t)\in H_{t}$ we know that 
\begin{equation}\label{eq:DeltaBound}
\Delta_{t}(x^{k_{j}(t)}_{t})-\Delta_{t}(x^{k_{j+1}(t)}_{t})\geq \Delta_{t}(x^{k_{j}(t)}_{t})-\Delta_{t}(x^{k_{j}(t)+1}_{t})\geq \frac{t(\tilde{G}^{k_{j}(t)}_{t})^{2}}{12(\nu+t\Omega_{g})^{2}}.
\end{equation}
Define $d_{j}:=\delta(\Delta_{t}(x^{k_{j}(t)}_{t})-\theta)$ for $j=1,\ldots,q$, so that $d_{j}\geq 0$. Then, we obtain the recursion 
$$
d_{j+1}-d_{j}\leq-\frac{\Gamma^{2}_{j}}{M}
$$
for $j\in\{1,\ldots,q-1\}$ and $\Gamma_{j}\equiv\tilde{G}^{k_{j}(t)}_{t}\geq d_{j}$ and $\frac{1}{M}\equiv \frac{t\delta}{12(\nu+t\Omega_{g})^{2}}$. By \cite[Proposition 4]{ZhaFre20}, we can conclude that 
$$
d_{q}<\frac{M}{q-1}\text{ and }\min\{\Gamma_{1},\ldots,\Gamma_{q}\}<\frac{2M}{q-1}.
$$
Since we know that $\min\{\Gamma_{1},\ldots,\Gamma_{q}\}\geq\delta\eta$, we obtain the bound 
\begin{equation}\label{eq:q}
q\leq \frac{2M}{\delta\eta}+1=\frac{24(\nu+t\Omega_{g})^{2}}{t\delta^{2}\eta}+1.
\end{equation}

\paragraph{Bound on $L_{t}$:} 
On this subset we cannot use the same argument as for the iterates in $H_{t}$, since we cannot guarantee that the sequence $d_{j}$ involved in the previous part of the proof is positive. However, we still know that \eqref{eq:DeltaBound} applies for the iterates in $L_{t}$. If we telescope this expression over the indices $q+1,\ldots,p$, and using the fact that $\Delta_{t}(x^{k_{j}}(t))\in[0,\theta]$, we see that 
\begin{align*}
\theta\geq \Delta_{t}(x_{t}^{k_{q+1}(t)})-\Delta_{t}(x_{t}^{k_{p+1}(t)})&\geq \frac{t(p-q)}{12(\nu+t\Omega_{g})^{2}}\min_{q+1\leq j\leq p}\left(\tilde{G}_{t}^{k_{j}(t)}\right)^{2}\\
&\geq  \frac{t(p-q)\eta^{2}\delta^{2}}{12(\nu+t\Omega_{g})^{2}}
\end{align*}
It follows
\begin{equation}\label{eq:pq}
(p-q)\leq \frac{12\theta(\nu+t\Omega_{g})^{2}}{t\eta^{2}\delta^{2}}.
\end{equation}

\paragraph{Combining the bounds:}
Combining the expressions \eqref{eq:q} and \eqref{eq:pq} together with the bound \eqref{eq:ILMO_prop_Delta_proof_6}, we see that 
\begin{align*}
\tilde{R}_{t}(x^{0}_{t},\eta,\delta,\theta)&= \left\lceil \frac{5.3(t\delta(\Delta_{t}(x^{0}_{t})-\theta)+\nu+t\Omega_{g})}{\delta}\log\left( \min\left\{\frac{10.6 t \Delta_{t}(x^{0}_{t})}{(1-10.6 t \theta)_+}, \frac{\Delta_{t}(x^{0}_{t})}{\eta} \right\}  \right)\right\rceil\\
&+ \left\lceil \frac{12(\nu+t\Omega_{g})^{2}}{t\delta^{2}\eta} \left(2+\frac{\theta}{\eta}\right)\right\rceil+1
\end{align*}
defines an upper bound for the length of phase I. 

\subsection{Analysis of the outer loop and proof of Theorem \ref{th:Complexity_inexact}}
We set $\hat{x}_{i}\equiv x^{\tilde{R}_{i}}_{t_{i}}$ the last iterate of procedure $\ICG(x^{0}_{t_{i}},t_{i},\eta_{i},\delta,\theta_{i})$, where $\tilde{R}_{i}$ is at most $R_{\tilde{G},t_{i}}(x^{0}_{t_{i}},\eta_{i},\delta,\theta_{i})$ by \ref{prop:GapInexact}. Therefore, we know that $\widetilde{\gap}_{t_{i}}(\hat{x}_{i})\leq\eta_{i}\delta$, and a-fortiori 
$$
\Delta_{t_{i}}(\hat{x}_{i})\leq \gap_{t_{i}}(\hat{x}_{i})\leq \eta_{i}+\theta_{i}.  
$$
By eq. \ref{eq:g2} and the definition of the $(\delta,\theta_{i})$-\ac{ILMO}, we obtain 
\begin{align*}
g(\hat{x}_{i})-g(z^{\ast}(t_{i}))&\leq\gap_{t_{i}}(\hat{x}_{i})+\frac{1}{t}F'(\hat{x}_{i})[z^{\ast}(t_{i})-\hat{x}_{i}]\\
&\leq \frac{\widetilde{\gap}_{t_{i}}(\hat{x}_{i})}{\delta}+\theta_{i}+\frac{\nu}{t_{i}}\\
&\leq \eta_{i}+\theta_{i}+\frac{\nu}{t_{i}}.
\end{align*}
Hence, using \ref{lem:pathfollowing}, we observe
$$
g(\hat{x}_{i})-\Opt=g(\hat{x}_{i})-g(z^{\ast}(t_{i}))+g(z^{\ast}(t_{i}))-\Opt\leq \eta_{i}+\theta_{i}+\frac{2\nu}{t_{i}}.
$$
Since $t_{i}=\frac{2\nu}{\eta_{i}}$ and $\theta_i=\eta_i$, we obtain $g(\hat{x}_{i})-\Opt\leq  3\eta_{i}$. The latter, by the choice of the number of restarts $I$ implies that 
$$
g(\hat{z})-\Opt = g(\hat{x}_{I})-\Opt \leq \eps.
$$

Our next goal is to estimate the total number of inner iterations based on the estimation for the first epoch $i=0$ and summation over epochs $i=1,...,I$. For each epoch we use the estimate from \ref{prop:GapInexact}, which we repeat here for convenience
\begin{align}
\label{eq:IO_outer_proof_2}
\tilde{R}_{t_i}(x^{0}_{t_{i}},\eta_{i},\delta,\theta_{i})&= \left\lceil \frac{5.3(t_{i}\delta(\Delta_{t_{i}}(x^{0}_{i})-\theta_{i})+\nu+t_{i}\Omega_{g})}{\delta}\log\left( \min\left\{\frac{10.6 t_{i} \Delta_{t_{i}}(x^{0}_{t_{i}} )}{(1-10.6 t_{i} \theta_{i})_+}, \frac{\Delta_{t_{i}}(x^{0}_{t_{i}})}{\eta_{i}} \right\} \right) \right\rceil \notag \\
& \hspace{2em} + \left\lceil \frac{12(\nu+t_{i}\Omega_{g})^{2}}{t_{i}\delta^{2}\eta_{i}} \left(2+\frac{\theta_{i}}{\eta_{i}}\right)\right\rceil+1.
\end{align}
We see that we need to estimate the quantities $t_{i}(\Delta_{t_{i}}(x^{0}_{i})-\theta_{i})$ and $\frac{\Delta_{t_{i}}(x^{0}_{t_{i}})}{\eta_{i}}$, which is our next step.
Consider $i\geq 1$ and observe that
\begin{align*}
	V_{t_{i+1}}(x^{0}_{t_{i+1}})-V_{t_{i+1}}(z^{\ast}(t_{i+1}))&=V_{t_{i}}(x^{0}_{t_{i+1}})-V_{t_{i}}(z^{\ast}(t_{i+1}))\\
	&+\left(\frac{1}{t_{i+1}}-\frac{1}{t_{i}}\right)(F(x^{0}_{t_{i+1}})-F(z^{\ast}(t_{i+1})))\\
	&\leq V_{t_{i}}(x^{R_{i}}_{t_{i}})-V_{t_{i}}(z^{\ast}(t_{i}))\\
	&+\left(1-\frac{t_{i+1}}{t_{i}}\right)\left(V_{t_{i+1}}(x^{0}_{t_{i+1}})-V_{t_{i+1}}(z^{\ast}(t_{i+1}))\right)\\
	&+(1-\frac{t_{i+1}}{t_{i}})(g(z^{\ast}(t_{i+1}))-g(x^{0}_{t_{i+1}})).
\end{align*}
Moreover,
$$
g(x^0_{t_{i+1}})-g(z^{\ast}(t_{i+1}))\leq g(x^0_{t_{i+1}})-\Opt\leq \eta_i+\theta_{i}+\frac{2\nu}{t_i}.
$$
Since $t_{i+1}>t_{i}$, the above two inequalities imply
\begin{align*}
\frac{t_{i+1}}{t_{i}}[V_{t_{i+1}}(x^{0}_{t_{i+1}})-V_{t_{i+1}}(z^{\ast}(t_{i+1}))]\leq V_{t_{i}}(x^{R_{i}}_{t_{i}})-V_{t_{i}}(z^{\ast}(t_{i}))+\frac{t_{i+1}-{t_i}}{t_{i}}(\eta_{i}+\theta_{i}+\frac{2\nu}{t_{i}}).
\end{align*}
Whence, 
\begin{align*}
	t_{i+1}(\Delta_{t_{i+1}}(x^{0}_{t_{i+1}}) - \theta_{i+1}) & \leq t_{i}\Delta_{t_{i}}(x^{R_{i}}_{t_{i}})+(t_{i+1}-t_{i})(\eta_{i}+\theta_{i}+\frac{2\nu}{t_{i}}) - t_{i+1} \theta_{i+1} \\
	& \leq t_{i}(\eta_{i}+\theta_{i}) +(t_{i+1}-t_{i})(\eta_{i}+\theta_{i}+\frac{2\nu}{t_{i}}) - t_{i+1} \theta_{i+1} \\
	& = 4 \nu + (t_{i+1}-t_{i})\cdot \frac{6\nu}{t_{i}} - 2 \nu = 2 \nu +  (1/\sigma-1)6\nu = 2\nu (3/\sigma - 2),  
\end{align*}
where we used our assignments $t_{i+1}=t_i/\sigma, \eta_it_i=\theta_i t_i = 2 \nu$.
In the same way, we obtain
\begin{align*}
	\frac{\Delta_{t_{i+1}}(x^{0}_{t_{i+1}})}{\eta_{i+1}} &\leq \frac{t_{i}}{t_{i+1}\eta_{i+1}}\Delta_{t_{i}}(x^{R_{i}}_{t_{i}})+\frac{t_{i+1}-t_{i}}{t_{i+1}\eta_{i+1}}(\eta_{i}+\theta_{i}+\frac{2\nu}{t_{i}}) \\
	&\leq \frac{t_{i}}{t_{i+1}\eta_{i+1}} (\eta_{i}+\theta_{i}) + \frac{t_{i+1}-t_{i}}{t_{i+1}\eta_{i+1}} (\eta_{i}+\theta_{i}+\frac{2\nu}{t_{i}}) \\
	& = \frac{4\nu }{t_{i+1}\eta_{i+1}} + \frac{t_{i+1}-t_{i}}{t_{i+1}\eta_{i+1}} \frac{6\nu}{t_{i}} = 2 + (1/\sigma-1) 3 =   (3/\sigma - 1),
\end{align*}
where we again used our assignments $t_{i+1}=t_i/\sigma, \eta_it_i=\theta_i t_i = 2 \nu$.

Using these bounds, our assignments $\eta_i=\theta_i= 2 \nu/t_i$, we see that $1-10.6 t_{i} \theta_{i} <0$ and the bound \eqref{eq:IO_outer_proof_2} simplifies to
\begin{align}
\label{eq:IO_outer_proof_3}
\tilde{R}_{t_i}(x^{0}_{t_{i}},\eta_{i},\delta,\theta_{i})&\leq \left\lceil \frac{5.3(2\delta\nu (3/\sigma - 2)+\nu+t_{i}\Omega_{g})}{\delta}\log (3/\sigma - 1) \right\rceil + \left\lceil \frac{18(\nu+t_{i}\Omega_{g})^{2}}{\nu\delta^{2}}\right\rceil+1 \\
& \leq 3 + \frac{5.3(2\delta\nu (3/\sigma - 2)+\nu)}{\delta}\log (3/\sigma - 1) + \frac{18\nu}{\delta^2}\\
& \hspace{1em} + t_i \cdot \left(\frac{5.3  \Omega_{g}}{\delta}\log (3/\sigma - 1) + \frac{18 \Omega_{g}}{\delta^2} \right) + t_i^2 \cdot \frac{18 \Omega_{g}^2}{\nu\delta^2}.
\end{align}
Using the same bounds for $\sum_i t_i$, $\sum_i t_i^2$ as in the proof of \ref{th:Complexity} (see Section \ref{sec:outer}), we obtain that
\begin{align}
	\sum_{i=1}^{I}\tilde{R}_{i}&\leq \left(3 + \frac{5.3(2\delta\nu (3/\sigma - 2)+\nu)}{\delta}\log (3/\sigma - 1) + \frac{18\nu}{\delta^2} \right) \frac{\log(2\eta_{0}/\eps)}{\log(1/\sigma)} \notag  \\
	& \hspace{1em}+ \left(\frac{5.3  \Omega_{g}}{\delta}\log (3/\sigma - 1) + \frac{18 \Omega_{g}}{\delta^2} \right) \frac{4\nu}{\eps(1-\sigma)} \notag \\
	& \hspace{1em} + \frac{288 \nu \Omega_{g}^2}{\delta^2 \eps^2 (1-\sigma^2)}. \label{eq:IO_outer_proof_4}
\end{align}
Finally, we estimate $\tilde{R}_0$ as follows using \eqref{eq:IO_outer_proof_2} and that $t_0\eta_0=2\nu$, $t_0=\nu/\Omega_g$, $\theta_0=\eta_0$, $\delta <1$:
\begin{align*}
\tilde{R}_0 &\leq \left\lceil \frac{5.3(t_{0}\delta\Delta_{t_{0}}(x^{0})+\nu+t_{0}\Omega_{g})}{\delta}\log\left( \frac{\Delta_{t_{0}}(x^{0})}{\eta_{0}}\right)    \right\rceil + \left\lceil \frac{18(\nu+t_{0}\Omega_{g})^{2}}{\nu\delta^{2}} \right\rceil+1\\
& \leq \frac{5.3(F(x^{0})-F(z^*(t_0))+3\nu)}{\delta}\log\left( \frac{  F(x^{0})-F(z^*(t_0))  + \nu}{2 \nu} \right)   +   \frac{72\nu}{\delta^{2}} +3.
\end{align*}
Combining the latter with \eqref{eq:IO_outer_proof_4}, we obtain that the leading complexity term is $O\left( \frac{\nu \Omega_{g}^2}{\delta^2 \eps^2 (1-\sigma^2)}\right).$

We remark that assuming that the oracle accuracies $\theta_i$ have geometric decay is crucial in order to obtain the same dependence on the accuracy in the complexity as in the exact case. Indeed, if the oracle accuracy is fixed to be $\theta_i\equiv\theta$, then the last term in the bound \eqref{eq:IO_outer_proof_2} contains a term $\frac{12t_i^2\Omega_g^2\theta}{2\nu\delta^2\eta_i} = \frac{12t_i^3\Omega_g^2\theta}{4\nu^2\delta^2}$, which after summation will imply that the leading term in the complexity would be $O(1/\eps^4)$.

\section{Added Details to the Numerical Experiments}
\label{sec:exp}
%

In this section we report outcomes obtained by extensive numerical experiments illustrating the practical performance of our method.
We implement Algorithm \ref{alg:Homotopy} using procedure $\CG$ and $\LCG$ as solvers for the inner loops. We compare the results of our algorithm to two benchmarks: (1) an interior point solution of the problem, and (2) $\CGAL$, implemented following the suggestions in \cite{YurTroFerUdeCev21}. \sh{In order to enable a fair comparison with $\CGAL$, both $\CG$ and $\LCG$ also used the LMO implementation of the random Lanczos algorithm provided as part of the $\CGAL$ code.} The interior point solution was obtained via \gi{the Matlab-based modelling} \sh{inteface} CVX \cite{cvx} (version 2.2) with the SDPT3 solver (version 4.0) \cite{Toh:1999aa}.  The implementation of CGAL was taken from \cite{SKetchyCGAL}. All experiments have been conducted on an Intel(R) Xeon(R) Gold 6254 CPU @3.10GHz server limited to 4 threads per run and 512G total RAM using Matlab R2019b.

\subsection{MaxCut Problem}\label{sec:max_cut}

The following class of SDPs is studied in \cite{AroHazKal05}. This SDP arises in many algorithms such as approximating MaxCut, approximating the CUTNORM of a matrix, and approximating solutions to the little Grothendieck problem \cite{CharWir04,BanNouVor16}. The optimization problem is defined as
\begin{equation}\label{eq:MaxQP}\tag{MAXQP}
 \begin{aligned}
	&\max&&\inner{\bC,\bX}\\
	&\text{s.t.}&& X_{ii}\leq 1\quad i=1,\ldots,n\\
	&&& \gi{\bX}\succeq 0
 \end{aligned}
\end{equation}
Let $\setQ:=\{y\in\Rn\vert y_{i}\leq 1,\; i=1,\ldots,n\}$, and consider the conic hull of $\setQ$, defined as
$\setK:=\{(y,t)\in\Rn\times\R\vert \frac{1}{t}y\in\setQ,t>0\}\subset\R^{n+1}.$ This set admits the $n$-logarithmically homogenous barrier
$$
f(y,t)=-\sum_{i=1}^{n}\log(t-y_{i})=-\sum_{i=1}^{n}\log(1-\frac{1}{t}y_{i})-n\log(t).
$$
We can then reformulate \eqref{eq:MaxQP} as
\begin{equation}
  \begin{aligned}
	  & \sh{\min_{\bX,t}}&&\{\sh{g(\bX):=-\inner{\bC,\bX}}\}\\
	  &\text{s.t. }& &(\bX,t)\in\sh{\setX:=\{(\bX',t')\in\scrS^{n}\times\R\vert \bX'\succeq 0,\;\tr(\bX')\leq n,t'=1\}},\\
	  &&&\scrP(\bX,t)\in\setK
  \end{aligned}
\end{equation}
where $\scrP(\bX,t):=[X_{11};\ldots;X_{nn};t]^{\top}$ is a linear homogenous mapping from $\symm^{n}\times\R$ \sh{to} $\R^{n+1}$. Set $F(\bX,t)=f(\scrP(\bX,t))$ for $(\bX,t)\in\symm^{n}\times(0,\infty)$.

\sh{We apply this formulation to the classical MaxCut problem: given an undirected graph $\scrG=(\scrV,\scrE)$ with vertex set $\scrV=\{1,\ldots,n\}$ and edge set $\scrE$, we seek to find 
the maximum-weight cut of the graph. The MaxCut problem is formulated as $\max_{\bx\in\{\pm 1\}^{n}}\bx^{\top}\bL\bx$,
where $\bL$ is the combinatorial Laplace matrix of 
graph $\scrG$. A convex relaxation of this NP-hard combinatorial optimization problem can be given by the SDP \cite{Goemans:1995aa}
\begin{align*}
  \max \frac{1}{4}\tr(\bL\bX)\text{ s.t.: }X_{ii}\leq 1,i\in\{1,2,\ldots,n\},\bX\in\symm^{n}_{+},
\end{align*}
which is the form of \eqref{eq:MaxQP} with $\bC=\frac{1}{4}\bL$.}

To evaluate the performance \gi{of the tested algorithms on this problem}, we consider the random graphs G1-G30, published online in \cite{Gset}. \sh{The $\CG$ and $\LCG$ methods were applied to all datasets with parameters $\eta_0=2\Omega_g$ and $\sigma\in\{0.25,0.5,0.9\}$. The solutions were compared to the solution obtained from running $\CGAL$, using the original $\CGAL$ code applied to the scaled problem and assuming that all constraint are satisfied with equality. The  computations were stopped after either 
$10,000$ seconds of running time.}
The benchmark solutions for the interior point method for each data set are displayed in Table \ref{tbl:MaxCutDataSetsInfo}. Table~\ref{tbl:MaxCutDataSetsInfo} also displays the size of each dataset, the value obtained by solving the MaxCut SDP relaxation using CVX with the SDPT3 solver, and $\lambda_{\text{min}}(\bX^{\SDPT3})$, that is the minimum eigenvalue of the solution. We observe that for larger graphs the interior-point solver SDPT3 returns infeasible solutions, displaying negative eigenvalues. If this occurs, the value obtained by a corrected solution is used as a reference value instead. This corrected solution $\tilde{\bX}^{\SDPT3}$ is constructed as $\tilde{\bX}^{\SDPT3}=(\bX^{\SDPT3}-\lambda_{\text{min}}(\bX^{\SDPT3})\bI)/\alpha$, where $\bI$ is the identity matrix, and $\alpha=\max_{i\in\{1,\ldots,n\}} \max\{\tilde{X}^{\SDPT3}_{ii},1\}$ is the minimal number for which $\tilde{X}^{\SDPT3}_{ii}\leq 1$ for all $i=1,\ldots,n$. \sh{The objective value of this corrected solution, $g(\tilde{X}^{\SDPT3})$ is also recorded in Table~\ref{tbl:MaxCutDataSetsInfo} as the feasible value (Feas. value). We therefore label the optimal solution $g^{*}$ as the minimum of $g(\tilde{X}^{\SDPT3})$ and the minimal value obtained by any of the methods.}
The performance measure we employ to benchmark our experimental results is the  \sh{\emph{relative optimality gap}, defined as $\frac{\abs{g(\bX)-g^*}}{\abs{g^*}}$.}
Moreover, since CGAL does not necessarily generate feasible solutions, its reported solution $\bX^{\CGAL}$ is corrected to the feasible one \sh{ $\tilde{\bX}^{\CGAL}=\bX^{\CGAL}/(\gamma+1)$, where $\gamma=\max_{i\in \{1,\ldots,n\}} \max\{X^{\CGAL}_{ii}-1,0\}$ is the \emph{relative feasibility gap} of $\CGAL$. In particular, for every dataset $D$, method $M$, and parameter choice $P$, after each iteration $i$ we record the objective value $g^{M,P,D}_i$ of the solution after this iteration (or, in the case of $\CGAL$, the corrected solution), the corresponding relative optimality gap 
\gi{$r^{M,P,D}_i={|g^{M,P,D}_{i}-g^*|}/{|g^*|}$,}
the time from the start of the run $t^{M,P,D}_i$ and, in the case of $\CGAL$, the feasibility gap $\gamma^{\CGAL,P,D}_i$ of the current solution.} 

Figures \ref{fig:MaxCut_Small_fVsIter}-\ref{fig:MaxCut_Large_fVsTime} illustrate the performance of our methods for various parameter values and datasets G1-G30 vs. both iteration and time.
\sh{More specifically,
Figures~\ref{fig:MaxCut_Small_fVsIter} and \ref{fig:MaxCut_Large_fVsIter} display, for each iteration $i$, the best relative optimality gap $\min_{j\leq i}\{r^{M,P,D}_j\}$ attained up to that iteration; Figures~\ref{fig:MaxCut_Small_fVsTime} and \ref{fig:MaxCut_Large_fVsTime} display, for each time point $t$, the best relative optimality gap $\min_{j}\{r^{M,P,D}_j \vert t^{M,P,D}_j\leq t\}$ obtained up to time $t$. Additionally, Tables \ref{tbl:max_cut1} and \ref{tbl:max_cut2} report the algorithmic performances of the tested methods for each data base and method, at specific time points $t\in\{100,1000,10000\}$ seconds. Specifically, for each of the time points we compute $j^*=\argmin_{j}\{g^{M,P,D}_j \vert t^{M,P,D}_j\leq t\}$, and then display the relative optimality gap (Opt. Gap) $r^{M,P,D}_{j^*}$, the relative feasibility gap $\gamma^{\CGAL,D}_{j^*}$ for $\CGAL$ (Feas. Gap), and the number of iterations $\max\{j\vert t^{M,P,D}_j\leq t\}$ performed until that time point (Iter). The smallest relative optimality gap for each data set and time is marked in bold.}

\sh{Figures \ref{fig:MaxCut_Small_fVsIter} and \ref{fig:MaxCut_Large_fVsIter}, show that the performances of $\CG$ and $\LCG$ are similar across iterations, with $\LCG$ having a slight advantage. Further, the performance of both methods remains nearly unchanged for different values of $\sigma$. Note that, for a low number of iterations (smaller than 1000), $\CG$ and $\LCG$ generally perform better than $\CGAL$. However, beyond that point, $\CGAL$ exhibits a sudden improvement in objective value, leading to better performance. When looking at the methods' performances over time in Figures \ref{fig:MaxCut_Small_fVsTime} and \ref{fig:MaxCut_Large_fVsTime}, the initial advantage of $\CG$ and $\LCG$ seems to disappear, due to the lower per-iteration cost of $\CGAL$. However, a closer look at the results in Tables~\ref{tbl:max_cut1} and \ref{tbl:max_cut2} reveals that $\CG$ achieves the lowest optimality gap in 7 out of 30 datasets after 1000 seconds, and in 13 out of the 30 datasets after 10000 seconds. 
The former instances can be identified by the fact that $\CGAL$ obtains an optimality gap higher than $0.5\%$ after 100 seconds, and has a very slow improvement rate after that point. In contrast, $\CG$ obtains a higher optimality gap of approximately $2\%$ at that time but manages to achieve better performance due to its superior optimality gap reduction rate.
Additionally, we observe that the costly iterations of $\LCG$ result in inferior performance over time \gi{compared} to $\CG$, as it \gi{can only complete} 
a tenth of the iterations \gi{within} the same time frame.
In summary, while $\CGAL$ manages to obtain a less than $1\%$ optimality gap quite quickly, in about a third of the instances it gets `stuck' and does not significantly improve the objective function, whereas $\LCG$ and $\CG$ are more consistent across the instances starting from a higher optimality gap, but making progress in a constant rate throughout their execution, with $\CG$ being more efficient in terms of time complexity.
}

\begin{table}[t]
   \centering
   \caption{MaxCut datasets information}
   \label{tbl:MaxCutDataSetsInfo}
   \begin{minipage}{0.48\textwidth}
   \resizebox{\linewidth}{!}{
   \begin{tabular}{p{1.5cm}p{1.5cm}p{1.5cm}|rrr}
	   \hline
	   \hline
	   &&& \multicolumn{3}{c}{$\SDPT3$}\\
	   Dataset\;& \#Nodes\; & \#Edges\; & \multicolumn{1}{>{\centering\arraybackslash}p{1cm}}{Obj. value} &  \multicolumn{1}{>{\centering\arraybackslash}p{0.8cm}}{$\lambda_{\text{min}}$} & \multicolumn{1}{>{\centering\arraybackslash}p{1cm}}{Feas. value} \\
	   \hline\hline
	   G1 	&	800	&	19176	&	12083.2	&	0	&	 -	\\
	   G2 	&	800	&	19176	&	12089.43	&	0	&	 -	\\
	   G3 	&	800	&	19176	&	12084.33	&	0	&	 -	\\
	   G4 	&	800	&	19176	&	12111.5	&	0	&	 -	\\
	   G5 	&	800	&	19176	&	12100	&	0	&	 -	\\
	   G6 	&	800	&	19176	&	2667	&	0	&	 -	\\
	   G7 	&	800	&	19176	&	2494.25	&	0	&	 -	\\
	   G8 	&	800	&	19176	&	2535.25	&	0	&	 -	\\
	   G9 	&	800	&	19176	&	12111.5	&	0	&	 -	\\
	   G10 	&	800	&	19176	&	12111.5	&	0	&	 -	\\
	   G11 	&	800	&	1600	&	634.83	&	0	&	 -	\\
	   G12 	&	800	&	1600	&	628.41	&	0	&	 -	\\
	   G13 	&	800	&	1600	&	651.54	&	0	&	 -	\\
	   G14 	&	800	&	1600	&	12111.5	&	0	&	 -	\\
	   G15 	&	800	&	4661	&	3171.56	&	0	&	 \phantom{14135.9}-	\\
	   \hline
	   \hline
   \end{tabular}}
   \end{minipage}
   \begin{minipage}{0.48\textwidth}
	   \resizebox{\linewidth}{!}{
	   \begin{tabular}{p{1.5cm}p{1.5cm}p{1.5cm}|rrr}
		   \hline
		   \hline
		   &&& \multicolumn{3}{c}{$\SDPT3$}\\
		   Dataset\;& \#Nodes\; & \#Edges\; & Value &  \multicolumn{1}{>{\centering\arraybackslash}p{0.8cm}}{$\lambda_{\text{min}}$} & \multicolumn{1}{>{\centering\arraybackslash}p{1cm}}{Feas. value} \\
		   \hline\hline
	   G16 	&	800	&	4672	&	3175.02	&	0	&	 -	\\
	   G17 	&	800	&	4667	&	3171.25	&	0	&	 -	\\
	   G18 	&	800	&	4694	&	1173.75	&	0	&	 -	\\
	   G19 	&	800	&	4661	&	1091	&	0	&	 -	\\
	   G20 	&	800	&	4672	&	1119	&	0	&	 -	\\
	   G21 	&	800	&	4667	&	1112.06	&	0	&	 -	\\
	   G22 	&	2000	&	19990	&	18276.89	&	-1	&	14135.95	\\
	   G23 	&	2000	&	19990	&	18289.22	&	-1	&	14142.11	\\
	   G24 	&	2000	&	19990	&	18286.75	&	-1	&	9997.75	\\
	   G25 	&	2000	&	19990	&	18293.5	&	-1	&	9996	\\
	   G26 	&	2000	&	19990	&	18270.24	&	-1	&	14132.87	\\
	   G27 	&	2000	&	19990	&	8304.5	&	-1	&	4141.75	\\
	   G28 	&	2000	&	19990	&	8253.5	&	-1	&	4100.75	\\
	   G29 	&	2000	&	19990	&	8377.75	&	-1	&	4209	\\
	   G30 	&	2000	&	19990	&	8381	&	-1	&	4215.5	\\
	   \hline
	   \hline
   \end{tabular}}
\end{minipage}
\end{table}

\begin{figure}[h!]
   \centering
   \includegraphics[width=\textwidth]{./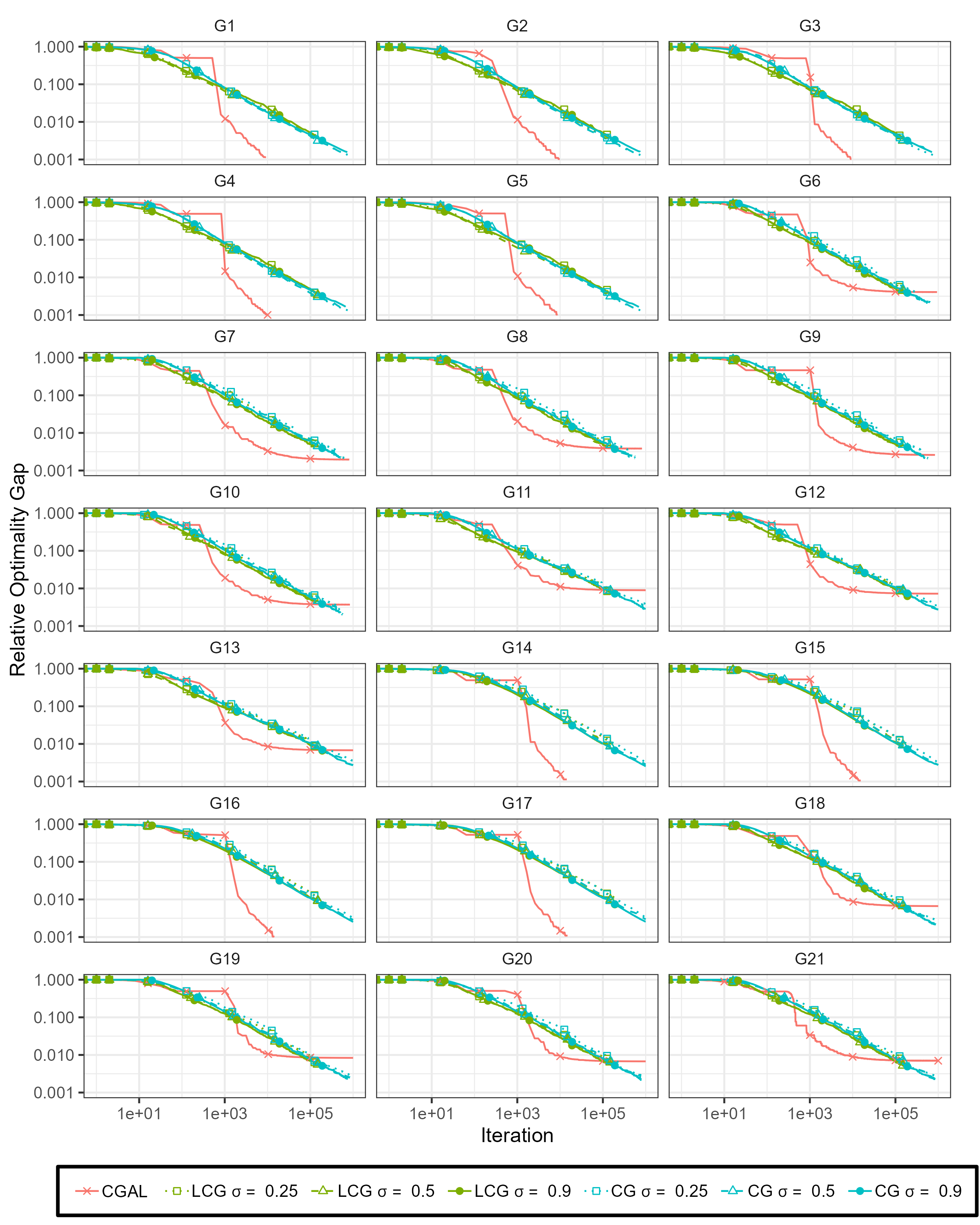}
   \caption{MaxCut datasets G1-G21 relative optimality gap vs. iteration.}
   \label{fig:MaxCut_Small_fVsIter}
\end{figure}
\begin{figure}[h!]
   \centering
   \includegraphics[width=\textwidth]{./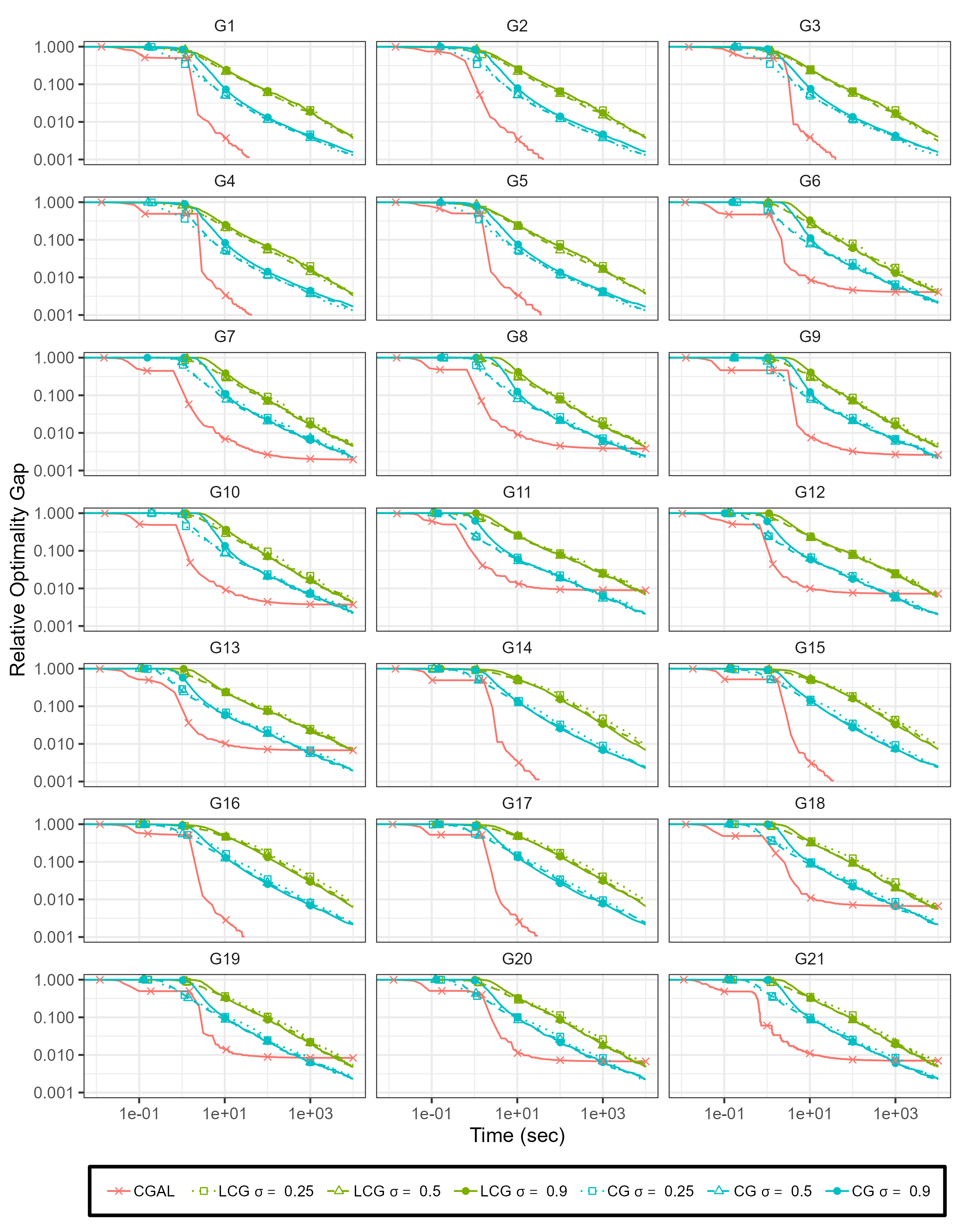}
   \caption{MaxCut datasets G1-G21 relative optimality gap vs. time.}
   \label{fig:MaxCut_Small_fVsTime}
\end{figure}
\begin{figure}[h!]
   \centering
   \includegraphics[scale=0.75]{./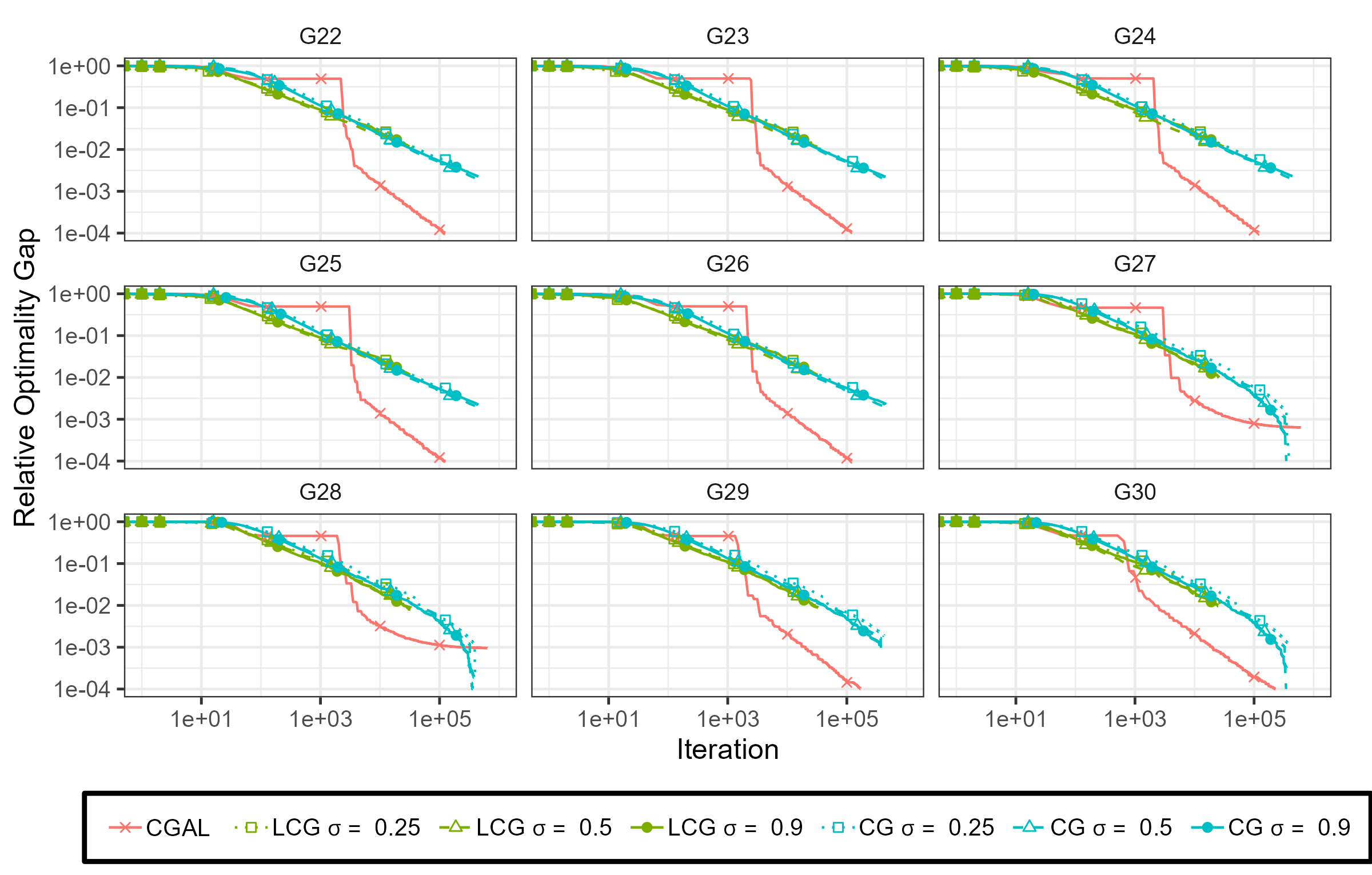}
   \caption{MaxCut datasets G22-G30 relative optimality gap vs. iteration.}\label{fig:MaxCut_Large_fVsIter}
\end{figure}
\begin{figure}[h!]
   \centering
   \includegraphics[scale=0.75]{./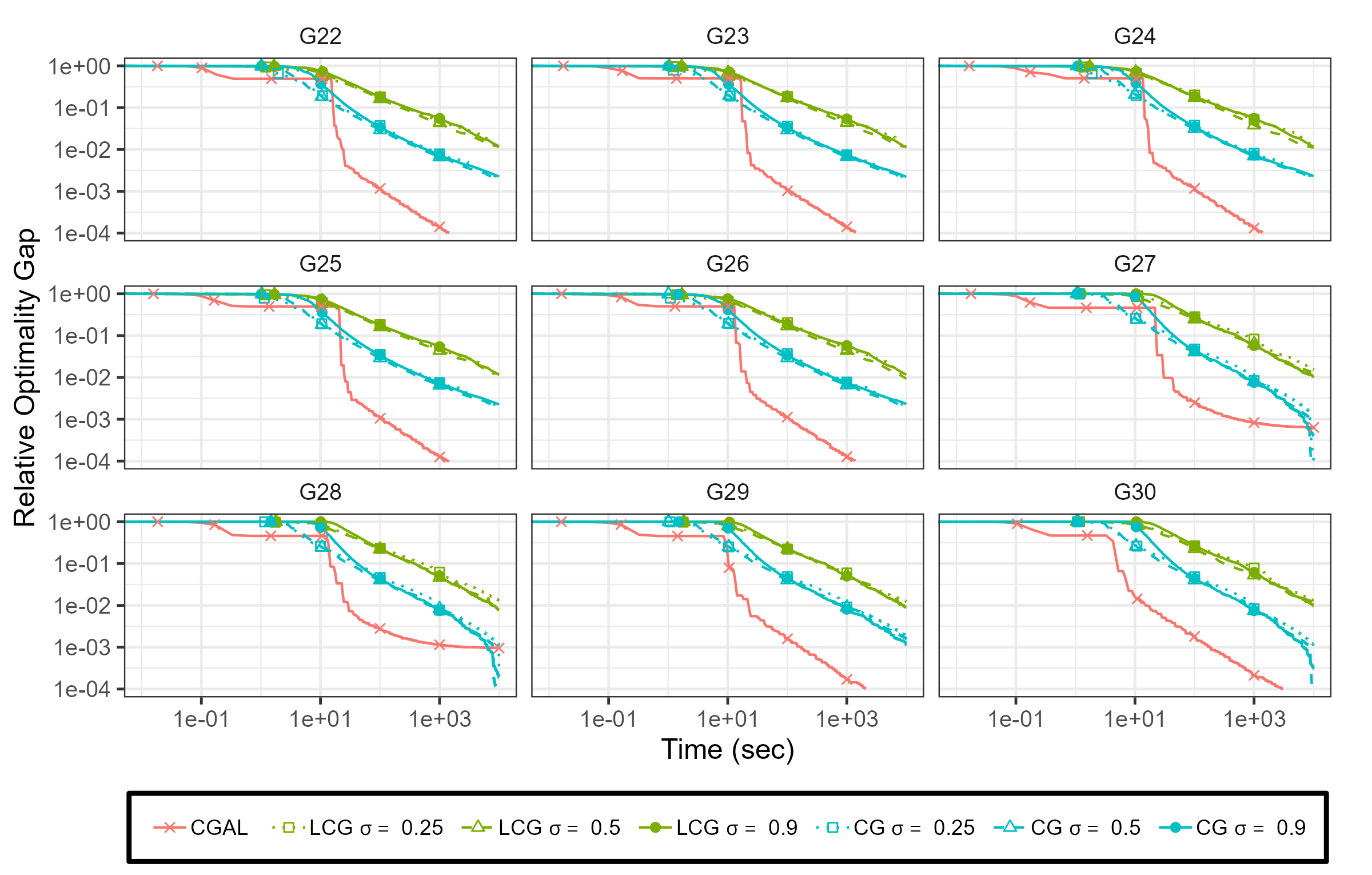}
   \caption{MaxCut datasets G22-G30 relative optimality gap vs. time.}\label{fig:MaxCut_Large_fVsTime}
   \vskip20pt
\end{figure}

\begin{table}[H]
   \caption{Performance for datasets \sh{G1-G18 at specific time points. For each data set, each time point, and each method the relative optimality gap (Opt. Gap), the iteration count (Iter) and for $\CGAL$ the relative feasibility gap (Feas. Gap) of the solution with the best objective value obtained until that time.  The best relative optimality gap for each row is marked in bold.}}
   \label{tbl:max_cut1}
   \centering
   \setlength{\tabcolsep}{1.5pt}
   \resizebox{\textwidth}{!}{
	   \begin{tabular}{lr|rrr|rr|rr|rr|rr|rr|rr}
		   \hline
		   Data & Time & \multicolumn{3}{|c|}{CGAL}  & \multicolumn{2}{|c|}{LCG $\sigma=$0.25} & \multicolumn{2}{|c|}{LCG $\sigma=$0.5} & \multicolumn{2}{|c|}{LCG $\sigma=$0.9} & \multicolumn{2}{|c|}{CG $\sigma=$0.25} & \multicolumn{2}{|c|}{CG $\sigma=$0.5} & \multicolumn{2}{|c}{CG $\sigma=$0.9}\\
		   Set & (sec)& \multicolumn{1}{|l}{Opt.}   & \multicolumn{1}{l}{Feas.}  & \multicolumn{1}{l|}{Iter}  & \multicolumn{1}{|l}{Opt.}  &  \multicolumn{1}{l|}{Iter} & \multicolumn{1}{|l}{Opt.}  & \multicolumn{1}{l|}{Iter} & \multicolumn{1}{|l}{Opt.}  & \multicolumn{1}{l|}{Iter}  & \multicolumn{1}{|l}{Opt.}   & \multicolumn{1}{l|}{Iter}  & \multicolumn{1}{|l}{Opt.}  & \multicolumn{1}{l|}{Iter}  & \multicolumn{1}{|l}{Opt.}   & \multicolumn{1}{l}{Iter}  \\
			& & \multicolumn{1}{|l}{Gap \%}& \multicolumn{1}{l}{Gap \%}&   & \multicolumn{1}{|l}{Gap \%}&  & \multicolumn{1}{|l}{Gap \%}&  & \multicolumn{1}{|l}{Gap \%}&  & \multicolumn{1}{|l}{Gap \%}&   & \multicolumn{1}{|l}{Gap \%}&   & \multicolumn{1}{|l}{Gap \%}&   \\
		   \hline
		   \hline
		   \multirow{3}{*}{G1} &  100 & {\bf 0.0452} & 0.0427 &  21685 & 5.3256 &  1650 & 5.1013 &  1592 & 6.3882 &  1494 & 1.1139 &  19046 & 1.1743 &  18468 & 1.2960 &  17742 \\
		   & 1000 & {\bf 0.0100} & 0.0092 & 144575 & 1.8670 & 17024 & 1.5269 & 16930 & 1.7090 & 16589 & 0.3782 & 126215 & 0.3774 & 124709 & 0.4113 & 127334 \\
		     & 10000 & \bf{0.0010} & 0.0010 &  888035 & 0.4146 & 134546 & 0.3628 & 141906 & 0.3890 & 154157 & 0.1312 &  786346 & 0.1322 &  728109 & 0.1565 &  730861 \\   \hline
		   \multirow{3}{*}{G2} &  100 & {\bf 0.0443} & 0.0416 &  20514 & 5.1990 &  1673 & 4.9872 &  1668 & 6.1180 &  1614 & 1.1186 &  20079 & 1.1583 &  17849 & 1.3328 &  17078 \\
		   & 1000 & {\bf 0.0076} & 0.0074 & 135864 & 1.8878 & 17695 & 1.5429 & 17554 & 1.5702 & 17592 & 0.3857 & 136275 & 0.3675 & 120983 & 0.4254 & 121586 \\
		 & 10000 & {\bf 0.0013} & 0.0012 &  816010 & 0.3419 & 150126 & 0.3133 & 153430 & 0.4046 & 148095 & 0.1317 &  763621 & 0.1463 &  687932 & 0.1571 &  715246\\
         \hline
		   \multirow{3}{*}{G3} &  100 & {\bf 0.0641} & 0.0622 &  21664 & 5.9692 &  1416 & 5.2227 &  1629 & 6.1860 &  1585 & 1.1050 &  20425 & 1.1555 &  17401 & 1.3028 &  17647 \\
		   & 1000 & {\bf 0.0074} & 0.0071 & 141485 & 1.8837 & 17015 & 1.3351 & 17410 & 1.5635 & 17266 & 0.4115 & 124686 & 0.3536 & 120982 & 0.4347 & 116904 \\
            & 10000 & {\bf 0.0008} & 0.0008 &  892189 & 0.3171 & 151626 & 0.3283 & 156878 & 0.3777 & 156813 & 0.1326 &  763597 & 0.1405 &  649211 & 0.1665 &  669949 \\ 
		\hline
		   \multirow{3}{*}{G4} &  100 & {\bf 0.0686} & 0.0666 &  19922 & 5.6793 &  1487 & 5.7132 &  1355 & 6.9583 &  1208 & 1.0994 &  19435 & 1.1341 &  18535 & 1.2752 &  17574 \\
		   & 1000 & {\bf 0.0077} & 0.0073 & 133750 & 2.1315 & 13938 & 1.4965 & 14001 & 1.7269 & 14378 & 0.3918 & 124072 & 0.3522 & 125967 & 0.4050 & 123681 \\
            & 10000 & {\bf 0.0008} & 0.0008 &  892189 & 0.3171 & 151626 & 0.3283 & 156878 & 0.3777 & 156813 & 0.1326 &  763597 & 0.1405 &  649211 & 0.1665 &  669949 \\ 
		   \hline
		   \multirow{3}{*}{G5} &  100 & {\bf 0.0668} & 0.0644 &  20334 & 5.8090 &  1509 & 5.7594 &  1379 & 6.8983 &  1338 & 1.1164 &  20254 & 1.1668 &  16756 & 1.2883 &  17533 \\
		   & 1000 & {\bf 0.0125} & 0.0118 & 135291 & 2.0338 & 14165 & 1.3668 & 14650 & 1.7768 & 14696 & 0.4197 & 124738 & 0.3487 & 116404 & 0.4222 & 118817 \\
           & 10000 & {\bf 0.0008} & 0.0007 &  888095 & 0.3882 & 137871 & 0.3553 & 143148 & 0.3893 & 146200 & 0.1322 &  772666 & 0.1335 &  714684 & 0.1643 &  694540 \\
              \hline
		   \multirow{3}{*}{G6} &  100 & {\bf 0.4803} & 0.0682 &  19842 & 8.2189 &  1541 & 7.2658 &  1426 & 7.0960 &  1293 & 2.5433 &  16239 & 2.0011 &  14659 & 2.1106 &  12283 \\
		   & 1000 & {\bf 0.4179} & 0.0100 & 133015 & 1.8849 & 16247 & 1.7304 & 14630 & 1.5365 & 14492 & 0.6772 & 109973 & 0.5733 &  94957 & 0.6847 &  86027 \\
           & 10000 & 0.4078 & 0.0009 &  937170 & 0.4791 & 172357 & 0.4163 & 177925 & 0.3755 & 163790 & {\bf 0.2108} &  720098 & 0.2232 &  581993 & 0.2106 &  566692 \\ 
		   \hline
		   \multirow{3}{*}{G7} &  100 & {\bf 0.2743} & 0.0735 &  19832 & 9.9632 &  1282 & 7.2801 &  1206 & 8.0242 &  1129 & 2.2855 &  16331 & 1.9467 &  13740 & 2.1709 &  12006 \\
		   & 1000 & {\bf 0.2068} & 0.0110 & 133207 & 2.1909 & 13391 & 1.7102 & 12885 & 1.7527 & 12644 & 0.6705 & 113963 & 0.6946 &  91575 & 0.6064 &  89294 \\
           & 10000 & {\bf 0.1959} & 0.0009 &  813946 & 0.4959 & 142268 & 0.4302 & 129157 & 0.4549 & 124502 & 0.2127 &  657896 & 0.2206 &  525425 & 0.2223 &  510794 \\ 
   		   \hline
		   \multirow{3}{*}{G8} &  100 & {\bf 0.4518} & 0.0627 &  20101 & 8.5605 &  1370 & 7.5897 &  1277 & 7.9211 &  1179 & 2.3751 &  15179 & 2.1037 &  13710 & 2.1617 &  12686 \\
		   & 1000 & {\bf 0.3945} & 0.0102 & 134291 & 2.0238 & 13734 & 1.7576 & 13463 & 1.6682 & 13493 & 0.7010 & 104772 & 0.5873 &  92671 & 0.6569 &  84183 \\
           & 10000 & 0.3839 & 0.0009 &  820939 & 0.5342 & 137709 & 0.4611 & 133831 & 0.4212 & 129272 & {\bf 0.2134} &  566425 & 0.2330 &  566242 & 0.2457 &  480991 \\ 
		   \hline
		   \multirow{3}{*}{G9} &  100 &  {\bf 0.3402} & 0.0741 &  20486 & 8.6602 &  1454 & 6.8707 &  1402 & 6.7527 &  1364 & 2.5177 &  17074 & 2.0543 &  14883 & 2.1253 &  12280 \\
		   & 1000 & {\bf 0.2705} & 0.0105 & 137199 & 1.9809 & 15167 & 1.7249 & 14672 & 1.5419 & 14895 & 0.6765 & 113442 & 0.5708 & 101335 & 0.6687 &  78602 \\
            & 10000 & 0.2603 & 0.0011 &  839573 & 0.5128 & 148527 & 0.4597 & 141307 & 0.4304 & 137229 & {\bf 0.2126} &  578638 & 0.2375 &  561210 & 0.2158 &  492155 \\ 
		   \hline
		   \multirow{3}{*}{G10} &  100 & {\bf 0.4693} & 0.0931 &  20176 & 9.1327 &  1365 & 7.0820 &  1293 & 7.5663 &  1210 & 2.3524 &  16086 & 2.0193 &  13699 & 2.1163 &  12605 \\
		   & 1000 & {\bf 0.3829} & 0.0118 & 134447 & 2.0525 & 13854 & 1.7098 & 13728 & 1.5771 & 13428 & 0.6489 &  95647 & 0.5940 &  95183 & 0.5856 &  89508 \\
           & 10000 & 0.3711 & 0.0008 &  843504 & 0.5452 & 140556 & 0.4503 & 139367 & 0.4136 & 136075 & {\bf 0.2046} &  572185 & 0.2221 &  525312 & 0.2224 &  502739 \\ 
		   \hline
		   \multirow{3}{*}{G11} &  100 & {\bf 0.9391} & 0.0293 &  52396 & 7.6742 &  2344 & 6.8577 &  2332 & 7.4084 &  2073 & 2.1040 &  42363 & 1.8304 &  36848 & 1.7089 &  39655 \\
		   & 1000 & 0.9021 & 0.0045 & 343271 & 2.3392 & 25386 & 1.9903 & 24475 & 2.0117 & 24162 & 0.6135 & 361872 & 0.5343 & 341395 & {\bf 0.5732} & 309567 \\
           & 10000 & 0.8946 & 0.0007 & 1823694 & 0.7343 & 196738 & 0.7811 & 193144 & 0.6929 & 187647 & {\bf 0.2105} & 2176005 & 0.2228 & 1945371 & {\bf 0.2105} & 1813119 \\ 
		   \hline
		   \multirow{3}{*}{G12} &  100 & {\bf 0.7673} & 0.0295 &  52859 & 8.1471 &  2120 & 7.0435 &  2302 & 7.3282 &  2236 & 2.1726 &  38919 & 1.8728 &  37346 & 1.7495 &  37614 \\
		   & 1000 & 0.7294 & 0.0033 & 342496 & 2.4091 & 24045 & 2.1357 & 24512 & 2.3042 & 23901 & 0.6337 & 325029 & {\bf 0.5363} & 314001 & 0.5735 & 276203 \\
           & 10000 & 0.7236 & 0.0007 & 1901444 & 0.7065 & 228102 & 0.5723 & 222533 & 0.6089 & 205372 & {\bf 0.2002} & 2311377 & 0.2094 & 2029389 & 0.2037 & 1692541 \\ 
		   \hline
		   \multirow{3}{*}{G13} &  100 & {\bf0.7167} & 0.0260 &  50953 & 7.5696 &  2414 & 7.2154 &  1830 & 6.6205 &  2339 & 2.1816 &  42733 & 1.9334 &  37760 & 1.8089 &  36572 \\
		   & 1000 & 0.6846 & 0.0050 & 332950 & 2.4416 & 24491 & 2.1377 & 23816 & 1.9294 & 25070 & 0.6357 & 343063 & {\bf 0.5274} & 312566 & 0.5570 & 279764 \\
           & 10000 & 0.6775 & 0.0005 & 1843586 & 0.7297 & 211955 & 0.7435 & 204609 & 0.6543 & 195695 & 0.1995 & 1887493 & 0.2062 & 1895654 & {\bf 0.1948} & 1547687 \\ 
		   \hline
		   \multirow{3}{*}{G14} &  100 & {\bf 0.0488} & 0.0401 &  36050 & 21.069
           &  1686 & 16.422
           &  1764 & 14.932
           &  1636 & 2.9552 &  32769 & 2.6901 &  30857 & 2.3423 &  28826 \\
		   & 1000 & {\bf 0.0062} & 0.0051 & 247958 & 4.4529 & 18672 & 3.5753 & 18026 & 3.3863 & 17441 & 0.8246 & 243773 & 0.7049 & 220892 & 0.6569 & 206482 \\
           & 10000 & {\bf 0.0010} & 0.0006 & 1653247 & 0.9661 & 175901 & 0.7821 & 173581 & 0.6847 & 186339 & 0.2270 & 1629850 & 0.2462 & 1237850 & 0.2253 & 1212944 \\ 
		   \hline
		   \multirow{3}{*}{G15} &  100 & {\bf 0.0495} & 0.0411 &  36866 & 18.219
           &  2085 & 15.498
           &  2061 & 14.534
           &  2039 & 3.2756 &  35930 & 2.7522 &  29819 & 2.5041 &  27217 \\
		   & 1000 & {\bf 0.0083} & 0.0064 & 251117 & 4.1913 & 22101 & 3.2621 & 21712 & 3.0489 & 20963 & 0.8459 & 255448 & 0.7324 & 225802 & 0.6674 & 204773 \\
            & 10000 & {\bf 0.0011} & 0.0007 & 1579597 & 0.9704 & 196785 & 0.7993 & 192710 & 0.7211 & 183253 & 0.2433 & 1550812 & 0.2374 & 1314984 & 0.2514 & 1151339 \\ 
		   \hline
		   \multirow{3}{*}{G16} &  100 & {\bf 0.0365} & 0.0287 &  41046 & 16.989
           &  2178 & 14.670
           &  2150 & 13.058
           &  2034 & 2.7754 &  33679 & 2.5751 &  31471 & 2.3225 &  30479 \\
		   & 1000 &  {\bf 0.0061} & 0.0049 & 271597 & 3.9114 & 22674 & 3.1580 & 22386 & 2.8169 & 21605 & 0.7870 & 260408 & 0.7366 & 228063 & 0.7303 & 222851 \\
            & 10000 & {\bf 0.0010} & 0.0008 & 1679075 & 0.8573 & 224493 & 0.6819 & 227033 & 0.6209 & 224707 & 0.2386 & 1499401 & 0.2269 & 1423609 & 0.2134 & 1430100 \\ 
		   \hline
		   \multirow{3}{*}{G17} &  100 & {\bf 0.0518} & 0.0434 &  37335 & 20.096
           &  1707 & 17.255
           &  1757 & 14.974
           &  1703 & 3.2235 &  33611 & 2.7927 &  29911 & 2.6083 &  26684 \\
		   & 1000 & {\bf 0.0090} & 0.0075 & 253376 & 4.5791 & 18601 & 3.6161 & 18267 & 3.4578 & 17899 & 0.8749 & 243142 & 0.7551 & 216924 & 0.6898 & 205937 \\
            & 10000 & {\bf 0.0009} & 0.0006 & 1650749 & 0.9701 & 232319 & 0.7884 & 226754 & 0.6712 & 220683 & 0.2417 & 1537197 & 0.2402 & 1352397 & 0.2139 & 1371570 \\ 
		   \hline
		   \multirow{3}{*}{G18} &  100 & {\bf 0.7302} & 0.0560 &  38999 & 11.727 
           &  1795 & 10.971 
           &  1780 & 9.5330 &  1586 & 2.4057 &  24255 & 2.0556 &  22388 & 2.0462 &  20555 \\
		   & 1000 & 0.6753 & 0.0084 & 260520 & 2.7593 & 18331 & 2.1365 & 18265 & 2.1493 & 17040 & 0.7666 & 185915 & 0.6925 & 159283 & {\bf 0.6533} & 141416 \\
            & 10000 & 0.6645 & 0.0006 & 1599182 & 0.6728 & 182254 & 0.4799 & 183154 & 0.5566 & 166432 & 0.2586 & 1061813 & 0.2166 &  962496 & {\bf 0.2154} &  879504 \\ 
		   \hline
   \end{tabular}}
\end{table}

\begin{table}[h!]
   \caption{Performance for datasets \sh{G19-G30 at specific time points. For each data set, each time point, and each method we record the best relative optimality gap (Opt. Gap), the iteration count (Iter) and for $\CGAL$ the relative feasibility gap (Feas. Gap) of the solution with the best objective value obtained until that time. The best relative optimality gap for each row is marked in bold.}}
   \label{tbl:max_cut2}
   \centering
   \setlength{\tabcolsep}{1.5pt}
   \resizebox{1\textwidth}{!}{
	   \begin{tabular}{lr|rrr|rr|rr|rr|rr|rr|rr}
		   \hline
		   Data & Time & \multicolumn{3}{|c|}{CGAL}  & \multicolumn{2}{|c|}{LCG $\sigma=$0.25} & \multicolumn{2}{|c|}{LCG $\sigma=$0.5} & \multicolumn{2}{|c|}{LCG $\sigma=$0.9} & \multicolumn{2}{|c|}{CG $\sigma=$0.25} & \multicolumn{2}{|c|}{CG $\sigma=$0.5} & \multicolumn{2}{|c}{CG $\sigma=$0.9}\\
		   Set & (sec)& \multicolumn{1}{|l}{Opt.}   & \multicolumn{1}{l}{Feas.}  & \multicolumn{1}{l|}{Iter}  & \multicolumn{1}{|l}{Opt.}  &  \multicolumn{1}{l|}{Iter} & \multicolumn{1}{|l}{Opt.}  & \multicolumn{1}{l|}{Iter} & \multicolumn{1}{|l}{Opt.}  & \multicolumn{1}{l|}{Iter}  & \multicolumn{1}{|l}{Opt.}   & \multicolumn{1}{l|}{Iter}  & \multicolumn{1}{|l}{Opt.}  & \multicolumn{1}{l|}{Iter}  & \multicolumn{1}{|l}{Opt.}   & \multicolumn{1}{l}{Iter}  \\
		   & & \multicolumn{1}{|l}{Gap \%}&  \multicolumn{1}{l}{Gap \%}&   & \multicolumn{1}{|l}{Gap \%}&  &  \multicolumn{1}{|l}{Gap \%}&  &  Gap \%&  & \multicolumn{1}{|l}{Gap \%}&   & \multicolumn{1}{|l}{Gap \%}&   & \multicolumn{1}{|l}{Gap \%}&   \\
		   \hline
			   \hline
			   \multirow{3}{*}{G19} &  100 & {\bf 0.8967} & 0.0492 &  37083 & 12.059
               &  1630 & 10.580
               &  1541 & 10.506
               &  1527 & 2.3693 &  25356 & 2.2978 &  22770 & 2.0119 &  20054 \\
			   & 1000 & 0.8444 & 0.0066 & 254368 & 2.6314 & 16530 & 2.1796 & 16473 & 2.1604 & 15966 & 0.6124 & 162830 & 0.6152 & 137255 & {\bf 0.6637} & 129524 \\
               & 10000 & 0.8351 & 0.0006 & 1723323 & 0.5384 & 205382 & 0.5075 & 194687 & 0.4851 & 186757 & 0.2624 & 1026619 & 0.2506 &  842195 & {\bf 0.2320} &  756654 \\ 
			   \hline
			   \multirow{3}{*}{G20} &  100 & {\bf0.7363} & 0.0538 &  36281 & 12.956
               &  1486 & 10.386
               &  1593 & 10.453
               &  1462 & 2.8064 &  23999 & 2.4927 &  21325 & 2.2067 &  19130 \\
			   & 1000 & 0.6819 & 0.0062 & 251142 & 2.9914 & 15873 & 2.4176 & 15989 & 2.2335 & 16143 & 0.7705 & 162281 & 0.7034 & 144692 & {\bf 0.6373} & 132063 \\
			    & 10000 & 0.6742 & 0.0006 & 1716273 & 0.6061 & 208638 & 0.5587 & 206001 & 0.4746 & 187323 & 0.2475 & 1068889 & {\bf 0.2064} &  924839 & 0.2231 &  790836 \\ 
                \hline
   \multirow{3}{*}{G21} &  100 & {\bf 0.7840} & 0.0726 &  41830 & 10.288
   &  1935 & 7.7700 &  1998 & 8.3024 &  1955 & 2.4996 &  24785 & 2.0761 &  20738 & 2.0545 &  20794 \\
			   & 1000 & 0.7121 & 0.0102 & 275681 & 2.1759 & 20341 & 1.8593 & 21211 & 1.8629 & 20506 & 0.7668 & 160845 & {\bf 0.5995} & 139988 & 0.6157 & 136838 \\
                & 10000 & 0.7005 & 0.0008 & 1610239 & 0.4848 & 188165 & 0.4806 & 187510 & 0.4938 & 178700 & 0.2522 & 1016576 & {\bf 0.2235} &  914508 & 0.2284 &  824945 \\ 
			   \hline
			   \multirow{3}{*}{G22} &  100 & {\bf 0.1174} & 0.1139 &  12416 & 18.1438 &   300 & 16.5330 &   288 & 18.2411 &   248 & 3.7563 &   7042 & 2.9849 &   6751 & 3.4068 &   5745 \\ 
    & 1000 & {\bf 0.0193} & 0.0187 &  90981 & 4.7658 &  3319 & 4.3706 &  3215 & 5.5525 &  2941 & 0.7828 &  64426 & 0.6636 &  59780 & 0.7353 &  58167 \\ 
    & 10000 & {\bf 0.0022} & 0.0021 & 632496 & 1.0548 & 34970 & 1.1614 & 31913 & 1.1801 & 31293 & 0.2095 & 497773 & 0.1931 & 447623 & 0.2247 & 449915 \\  
		   \hline
		   \multirow{3}{*}{G23} &  100 & {\bf 0.1023} & 0.0980 &  12410 & 18.5561 &   292 & 17.3589 &   250 & 17.8255 &   256 & 3.6153 &   6927 & 2.9642 &   6491 & 3.3613 &   5583 \\ 
    & 1000 & {\bf 0.0149} & 0.0143 &  90915 & 4.7672 &  3275 & 4.3952 &  2940 & 5.3747 &  3069 & 0.7403 &  60120 & 0.6684 &  57688 & 0.7333 &  57294 \\ 
    & 10000 & {\bf 0.0021} & 0.0020 & 631480 & 1.0797 & 33094 & 1.1388 & 31910 & 1.1669 & 32278 & 0.2062 & 478154 & 0.1923 & 440393 & 0.2229 & 449699 \\
		   \hline
		   \multirow{3}{*}{G24} &  100 & {\bf 0.1257} & 0.1222 &  12342 & 20.1997 &   251 & 17.6862 &   259 & 18.7144 &   242 & 3.8042 &   6856 & 3.1557 &   6188 & 3.4891 &   5526 \\ 
    & 1000 & {\bf 0.0164} & 0.0157 &  90383 & 5.1886 &  2825 & 3.8498 &  2957 & 5.5847 &  2858 & 0.8075 &  61881 & 0.6991 &  55330 & 0.7356 &  57577 \\ 
    & 10000 & {\bf 0.0020} & 0.0018 & 624799 & 1.2467 & 30859 & 1.1084 & 31608 & 1.1785 & 30999 & 0.2195 & 475491 & 0.2035 & 412481 & 0.2309 & 438368 \\ 
		   \hline
		   \multirow{3}{*}{G25} &  100 & {\bf 0.1057} & 0.1012 &  12533 & 18.5114 &   300 & 16.3371 &   284 & 17.9667 &   256 & 3.5888 &   6971 & 2.9573 &   6687 & 3.4011 &   5701 \\ 
    & 1000 & {\bf 0.0135} & 0.0128 &  91948 & 4.7165 &  3368 & 4.4134 &  3115 & 5.4227 &  3022 & 0.7561 &  63518 & 0.6510 &  60912 & 0.7309 &  58963 \\ 
    & 10000 &  {\bf 0.0018} & 0.0017 & 637281 & 1.0292 & 34064 & 1.1919 & 32793 & 1.1593 & 32361 & 0.2028 & 491183 & 0.1930 & 448653 & 0.2257 & 452544 \\ 
		   \hline
		   \multirow{3}{*}{G26} &  100 & {\bf 0.1375} & 0.1337 &  12736 & 20.6034 &   248 & 17.1955 &   269 & 18.7351 &   245 & 3.7270 &   6860 & 2.9696 &   6664 & 3.4397 &   5703 \\ 
    & 1000 & {\bf 0.0151} & 0.0145 &  92871 & 4.9788 &  3025 & 4.4023 &  3025 & 5.7863 &  2932 & 0.7733 &  62802 & 0.6581 &  60555 & 0.7417 &  59907 \\ 
           & 10000 & {\bf 0.0022} & 0.0021 & 635639 & 1.0644 & 32298 & 0.9407 & 32064 & 1.1557 & 32364 & 0.2074 & 490930 & 0.1957 & 442197 & 0.2317 & 461381 \\ 
		   \hline
		   \multirow{3}{*}{G27} &  100 & {\bf 0.3024} & 0.2283 &  11530 & 28.4263 &   195 & 25.0943 &   214 & 27.2575 &   174 & 4.7957 &   6361 & 4.0809 &   6127 & 4.4816 &   4712 \\ 
    & 1000 & {\bf 0.0877} & 0.0246 &  82946 & 8.2273 &  2240 & 6.4036 &  2407 & 5.7783 &  2351 & 0.8400 &  55681 & 0.8083 &  52249 & 0.7602 &  47704 \\ 
    & 10000 & 0.0646 & 0.0031 & 609686 & 1.5543 & 25336 & 0.9938 & 25525 & 0.9926 & 25790 & 0.0102 & 390126 & {\bf 0.0000} & 378324 & 0.0409 & 344793 \\ 
		   \hline
		   \multirow{3}{*}{G28} &   100 & {\bf 0.3037} & 0.1984 &  12418 & 23.2191 &   281 & 22.2085 &   280 & 23.0712 &   226 & 4.6768 &   6415 & 4.1060 &   6054 & 4.3763 &   4992 \\ 
    & 1000 & {\bf 0.1194} & 0.0240 &  91115 & 6.2536 &  3044 & 4.6599 &  3113 & 5.0701 &  3019 & 0.7727 &  54325 & 0.8275 &  54898 & 0.7498 &  50886 \\ 
    & 10000 & 0.0964 & 0.0026 & 634335 & 1.3236 & 33006 & 0.8704 & 33326 & 0.7890 & 32323 & 0.0216 & 393334 & {\bf 0.0000} & 397716 & 0.0190 & 377339 \\ 
		   \hline
		   \multirow{3}{*}{G29} &  100 & {\bf 0.1737} & 0.1651 &  12428 & 22.335 &   291 & 22.374 &   276 & 22.594 &   244 & 4.8811 &   6431 & 4.1196 &   6132 & 4.4864 &   4976 \\ 
    & 1000 & {\bf 0.0238} & 0.0216 &  91157 & 5.9446 &  3169 & 5.2525 &  3145 & 5.0132 &  3068 & 0.9131 &  57929 & 0.9055 &  50524 & 0.8096 &  51421 \\ 
            & 10000 & {\bf 0.0025} & 0.0022 & 637658 & 1.2430 & 33012 & 0.8851 & 33265 & 0.9036 & 32289 & 0.1865\% & 421074 & 0.0965 & 375627 & 0.1135 & 356157 \\ 
		   \hline
		   \multirow{3}{*}{G30} &  100 & {\bf 0.2294} & 0.2176 &  11582 & 26.6363 &   217 & 22.9201 &   208 & 26.8685 &   190 & 4.8414 &   6122 & 4.0918 &   6027 & 4.4921 &   4685 \\ 
    & 1000 & {\bf 0.0300} & 0.0254 &  86515 & 7.8141 &  2482 & 5.3362 &  2403 & 6.1525 &  2295 & 0.8334 &  53756 & 0.7499 &  51787 & 0.7489 &  47761 \\ 
        & 10000 & 0.0048 & 0.0017 & 616813 & 1.0134 & 27305 & 1.1942 & 24995 & 0.9684 & 24960 & 0.1127 & 388638 & {\bf 0.0027} & 364280 & 0.0314 & 353046 \\ 
	\hline
   \end{tabular}}
\end{table}


\subsection{Markov chain mixing rate}\label{sec:Mixing}
We next consider the problem of finding the fastest mixing rate of a Markov chain on a graph \cite{SunBoyDiacXia06}. In this problem, a symmetric Markov chain is defined on an undirected graph $G=(\{1,\ldots,n\},\scrE)$. Given $G$ and weights $d_{ij}$ for $\{i,j\}\in\scrE$, we are tasked with finding the transition rates $w_{ij}\geq 0$ for each $\{i,j\}\in\scrE$ with weighted sum smaller than 1, that result in the fastest mixing rate. We assume that $\sum_{ij}d^{2}_{ij}=n^{2}$. The mixing rate is given by the second smallest eigenvalue of the graph's Laplacian matrix, described as
$$L(\bw)_{ij}=\begin{cases}
 \sum_{j:\{i,j\}\in\scrE} w_{ij} & j=i\\
 -w_{ij} & \{i,j\}\in\scrE\\
 0 &\text{otherwise}.
\end{cases}.$$
From $L\1=0$ it follows that $\frac{1}{n}\1$ is a stationary distribution of the process. From \cite{DiaStr91}, we have the following bound on the total variation distance of the time $t$ distribution of the Markov process and the stationary distribution
 $
 \sup_{\pi}\norm{\pi \bP(t)-\frac{1}{n}\1}_{\text{TV}}\leq\frac{1}{2}\sqrt{n}e^{-\lambda t}.
 $
 The quantity $T_{\text{mix}}=\lambda$ is called the \emph{mixing time}, and $\lambda\equiv\lambda_{2}(L(\bw))$ is the second-largest eigenvalue of the graph Laplacian $L(\bw)$. To bound this eigenvalue, we follow the strategy laid out in \cite{SunBoyDiacXia06}. Thus, the problem can be written as
\begin{align*}
 \max_{\bw}\quad &\lambda_2(L(\bw))\\
 \text{s.t.}\quad &\sum_{\{i,j\}\in\scrE} d_{ij}^2w_{ij}\leq 1\\
 \qquad  & \bw\geq 0.
\end{align*}
This problem can also alternatively be formulated as
\begin{align*}
 \min_{\bw}\quad &\sum_{\{i,j\}\in\scrE} d_{ij}^2w_{ij}\\
 \text{s.t.}\quad &\lambda_2(L(\bw)) \geq 1\\
 \qquad  & \bw\geq 0.
\end{align*}
Due to the properties of the Laplacian, the first constraint can be reformulated as
\begin{align}
 \min_{\bw}\quad &\sum_{\{i,j\}\in\scrE} d_{ij}^2w_{ij}\\
 \text{s.t.}\quad &L(\bw)\succeq \bI_{n\times n}-\frac{1}{n} \1 \1^\top\\
 \qquad  & \bw\geq 0.
\end{align}
The dual problem is then given by
\begin{equation}\label{eq:SDPMixing}
 \begin{split}
	\max_{\bX\in \symm^n_{+}}\quad &\inner{\bI_{n\times n}-\frac{1}{n} \1 \1^\top ,\bX}\\
	\text{s.t.}\quad &X_{ii}+X_{jj}-2X_{ij}\leq d_{ij}^2\qquad \{i,j\}\in\scrE
 \end{split}
\end{equation}
To obtain an SDP within our convex programming model, we combine arguments from \cite{SunBoyDiacXia06} and \cite{AroHazKal05}. Let $\bX$ be a feasible point for \eqref{eq:SDPMixing}. Then, there exists a $n\times n$ matrix $\bV$ such that $\bX=\bV\bV^{\top}$. It is easy to see that multiplying each row of the matrix $\bV^{\top}=[\bv_{1},\ldots,\bv_{n}]$ with an orthonormal matrix does not change the feasibility of the candidate solution. Moreover $X_{ij}=\bv_{i}^{\top}\bv_{j}$ for all $1\leq i,j\leq n$, so that $X_{ii}+X_{jj}-2X_{ij}=\norm{\bv_{i}-\bv_{j}}^{2}_{2}$. Since shifting each vector $\bv_i$ by a constant vector $\bv$ would not affect the objective or constraints, we can, without loss of generality, set $\bv_1=\0$.
This gives the equivalent optimization problem
\begin{equation}\label{eq:L2Embedding}
 \begin{split}
	\max_{\bv_{1},\ldots,\bv_{n}}\quad & \sum_{i=1}^{n}\norm{\bv_{i}}^{2}\\
	\text{s.t.}\quad & \norm{\bv_{i}-\bv_{j}}^{2}\leq d^{2}_{ij}\qquad\{i,j\}\in\scrE,\\
	&\bv_{i}\in\Rn, \bv_1=\0.
 \end{split}
\end{equation}
This is the geometric dual derived in \cite{SunBoyDiacXia06}, which is strongly connected to the geometric embedding of a graph in the plane \cite{GorHelWap08}.
Thus, setting again $\bX=\bV\bV^\top$, we therefore obtain that $X_{1j}=X_{j1}=0$ for all $i=1,\ldots,n$ thus reducing the dimension of the problem, and \eqref{eq:SDPMixing1} can be reformulated as follows:
\begin{equation}\label{eq:SDPMixing1}
 \begin{split}
	\max_{\bX\in \symm^{n-1}_{+}}\quad &\inner{\bI_{{n-1}\times {n-1}}-\frac{1}{n} \1 \1^\top ,\bX}\\
	\text{s.t.}\quad &X_{(i-1)(i-1)}+X_{(j-1)(j-1)}-2X_{(i-1)(j-1)}\leq d_{ij}^2\qquad \{i,j\}\in\scrE, i,j>1\\
	& X_{(j-1)(j-1)}\leq d_{1j}^2\qquad\qquad\qquad\qquad\qquad\qquad \{1,j\}\in \scrE
 \end{split}
\end{equation}
Moreover, since $\bX\in\symm^{n-1}_{+}$, it follows that $2|X_{ij}|\geq -\sqrt{X_{ii}X_{jj}}$ and so for any $ \{i,j\}\in\scrE$ such that $i,j>1$,$\sqrt{X_{(j-1)(j-1)}}\leq \sqrt{X_{(i-1)(i-1)}}+\sqrt{d_{ij}}$. Therefore, since the graph is connected, we can recursively bound each diagonal element of $\bX$ and therefore the trace by a constant $\alpha$, and add the trace constraint $\setX=\{\bX\in\symm^{n}\vert \bX\succeq 0,\tr(\bX)\leq \alpha\}$ to the problem formulation without affecting the optimal values.
Furthermore, defining $\scrD:\symm^{n-1}\to\R^{\abs{\scrE}}$ by $$\scrD(\bX)_{\{i,j\}}=\begin{cases}X_{(i-1)(i-1)}+X_{(j-1)(j-1)}-2X_{(i-1)(j-1)}, & i,j>0,\\
  X_{(j-1)(j-1)}, & i=1.\end{cases}$$ Let $\setQ=\{\by\in\R^{\abs{\scrE}}\vert y_{i,j}\leq d^{2}_{ij},\{i,j\}\in\scrE\}$ and $\setK=\{(y,t)\in\R^{\abs{\scrE}}\times\R\vert\frac{1}{t}y\in\setQ,t>0\}$. This is a closed convex cone with logarithmically homogeneous barrier
\[
f(y,t)=-\sum_{\{i,j\}\in\scrE}\log(t d^{2}_{ij}-y_{i,j})=-\sum_{e\in\scrE}\log(d^{2}_{ij}-\frac{1}{t}y_{i,j})-\abs{\scrE}\log(t).
\]
This gives a logarithmically homogeneous barrier $F(\bX,t)=f(\scrP(\bX,t))$, where $\scrP(\bX,t)=[\scrD(\bX);t]\in\R^{\abs{\scrE}}\times\R$. We thus arrive at a formulation of the form of problem \eqref{eq:Opt}, which reads explicitly as
\begin{equation}\label{eq:SDPMixing2}
  \begin{split}
	  \min_{\bX,t}\quad &\{g(\bX):=\inner{\bI_{n\times n}-\frac{1}{n} \1 \1^\top ,\bX}\}\\
	  \text{s.t.}\quad &\scrP(\bX,t)\in\setK\\
	  & \bX\in\setX,t=1.
  \end{split}
\end{equation}

\begin{table}[t]
 \centering
 \begin{tabular}{lll|r}
	\hline
	Dataset &
	\# Nodes &
	\# Edges & SDPT3 Value  \\
	\hline\hline
	1& 100 & 1000 & 15.62\\
	2 & 100 & 2000 & 7.93\\
	3 & 200 & 1000 & 72.32\\
	4 & 200 & 4000 & 17.84\\
	5 & 400 & 1000 & 388.52\\
	6 & 400 & 8000 & 38.37\\
	7 & 800 & 4000 & 333.25\\
	8 & 800 & 16000 & 80.03\\
	\hline
 \end{tabular}
 \caption{Mixing datasets characteristics.}
 \label{tbl:MixingDataSetsInfo}
\end{table}

We generated random connected undirected graphs of various sizes, and for each edge $\{i,j\}$ in the graph we generated a random $d_{ij}^2$ uniformly in $[0,1]$. Table~\ref{tbl:MixingDataSetsInfo} provides the size of each dataset, and the value obtained by solving the Mixing problem SDP using CVX with the SDPT3 solver.
In order to compare to the $\CGAL$ algorithm, we used the normalisation guidelines specified in \cite{YurTroFerUdeCev21}, so that the norm of the coefficients vector of each constraint will be equal to each other, and the norm of the constraint matrix and the objective matrix equals 1.

All datasets were run for both $\CG$ and $\LCG$ options with the following choice of parameters $\eta_0\in\{0.5\Omega_g,1\Omega_g,2\Omega_g\}$
and $\sigma\in\{0.9,0.5,0.25\}$. \sh{All methods were stopped when either they reached $10000$ iterations or $3600$ seconds running time, the earlier of the two conditions.}
Since $\CGAL$ does not generate feasible solutions, we computed the deviation from feasibility of its outputted solution $\bX^{\CGAL}$ by
$$\text{Relative Feasibility Gap}=\gamma(\bX^{\CGAL})\equiv\max_{\{i,j\}\in \scrE}\frac{d_{ij}^2-\scrD(\bX^{\CGAL})_{ij}}{d_{ij}^2}.$$ We then corrected the solution to a feasible solution $\tilde{\bX}^{\CGAL}=\bX^{\CGAL}/(\gamma(\bX^{\CGAL})+1)$, however in our numerical experiments it turned out that $\gamma$ was extremely large, which resulted in $\tilde{\bX}^{\CGAL}$ being very close to the zero matrix with very poor performance. Therefore, we used the following alternative iterative correction method: We set $\hat{\bX}^{\CGAL,1}=\bX^{\CGAL}$, and at each iteration $k$ we identify $\{i,j\}\in \scrE$ which leads to the maximum value for $\gamma(\hat{\bX}^{\CGAL,k})$ set $i^*=\argmax_{l\in\{i,j\}} X_{ll}$ and set
$$\hat{\bX}^{\CGAL,k+1}_{ij}=\begin{cases}
\hat{\bX}^{\CGAL,k+1}_{ij}, & i,j\neq i^*.\\
\hat{\bX}^{\CGAL,k+1}_{ij}/(\gamma(\hat{\bX}^{\CGAL,k})+1), & i = i^* \text{ or } j= i^*.
\end{cases}.$$
We terminate the algorithm when $\gamma(\hat{\bX}^{\CGAL,k})=0$. The value of the returned $\hat{\bX}^{\CGAL}$ is then computed. \sh{As in the previous section, one of our performance measures is the \emph{relative optimality gap} $\frac{\abs{g(\bX)-g(\tilde{\bX}^{\SDPT3})}}{\abs{g(\tilde{\bX}^{\SDPT3})}}$ 
between the (corrected) solution of each method and the $\SDPT3$ solution. Specifically, for each dataset $D$, method $M$ and parameter combination $P$, we record the objective value $g^{M,P,D}_i$ (or, for $\CGAL$, the corrected objective value) \gi{after iteration $i$}, the corresponding relative optimality gap $r^{M,P,D}_i=(g^{M,P,D}_i-g(\tilde{\bX}^{\SDPT3}))/g(\tilde{\bX}^{\SDPT3})$, the time $t^{M,P,D}_i$ \gi{elapsed since}
the start of the run and, only for $\CGAL$, the relative feasibility gap $\gamma^{\CGAL,D}_i$.}

Figures \ref{fig:Mixing_GapVsIter}-\ref{fig:Mixing_GapVsTime} illustrate the results for the tested $\sigma$ and $\eta_0$ values. \sh{Figure~\ref{fig:Mixing_GapVsIter} displays, for each iteration $i$, the best relative optimality gap $\min_{j\leq i}\{r^{M,P,D}_j\}$ up to that iteration. 
Figure~\ref{fig:Mixing_GapVsTime}  displays for each time $t$ the best relative optimality gap $\min_{j}\{r^{M,P,D}_i:t_j^{M,P,D}\leq t\}$ obtained up to time $t$.
Table \ref{tbl:MixingDataSetsResults} reports the performances of the tested methods at time points $t\in \{100,500,1000\}$ seconds, in order to examine their algorithmic behavior under different time constraints. Specifically, given a data set $D$, a method $M$, parameters $P$ and a time point $t$\gi{, for $\CG$ and $\LCG$} we compute $(j^*,P^*) =
\argmin_{j,P}\{r^{M,P,D}_\gi{j}\vert t_j^{M,P,D}\leq t\}$, so the table contains: the parameter values $P^*=(\eta_{0},\sigma)$, the corresponding objective function value (Obj. Value) $g_{j^*}^{M,P^*,D}$, the relative optimality gap (Opt. Gap) $r_j^{M,P,D}$, and the number of iterations (Iter) performed until that time, given by $\max\{j\vert t_{j}^{M,P^*,D}\leq t\}$. For $\CGAL$, since there is no parameter choice, we retrieve $j^*=\argmin_{j}\{r^{\CGAL,D}_i\vert t_j^{\CGAL,D}\leq t\}$, and the table records the corresponding objective value (Obj. Value) $g^{\CGAL,D}_{j^*}$, the relative optimal gap (Opt. Gap) $r^{\CGAL,D}_{j^*}$, the relative feasibility gap (Feas Gap) $\gamma^{\CGAL,D}_{j^*}$, as well as the number of iterations (Iter) given by $\max\{j\vert t_{j}^{\CGAL,D}\leq t\}$. The lowest relative optimality gap for each data set and time is marked in bold.}
   
\begin{figure}[h!]
  \centering
  \includegraphics[height=0.91\textheight]{./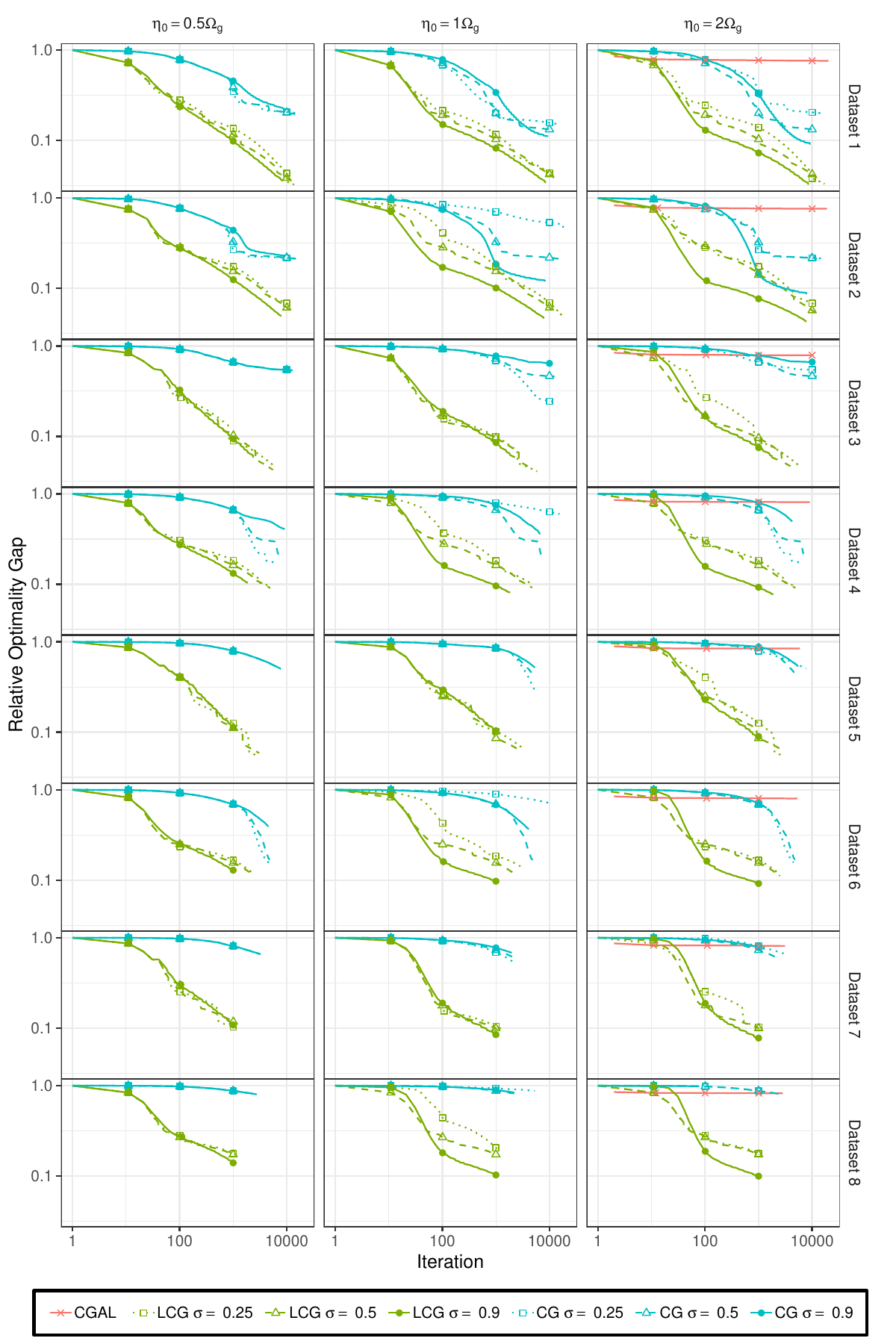}
  \caption{Mixing datasets relative \sh{optimality} gap vs. iteration for various parameter choices.}\label{fig:Mixing_GapVsIter}
\end{figure}

\begin{figure}[h!]
  \centering
  \includegraphics[height=0.91\textheight]{./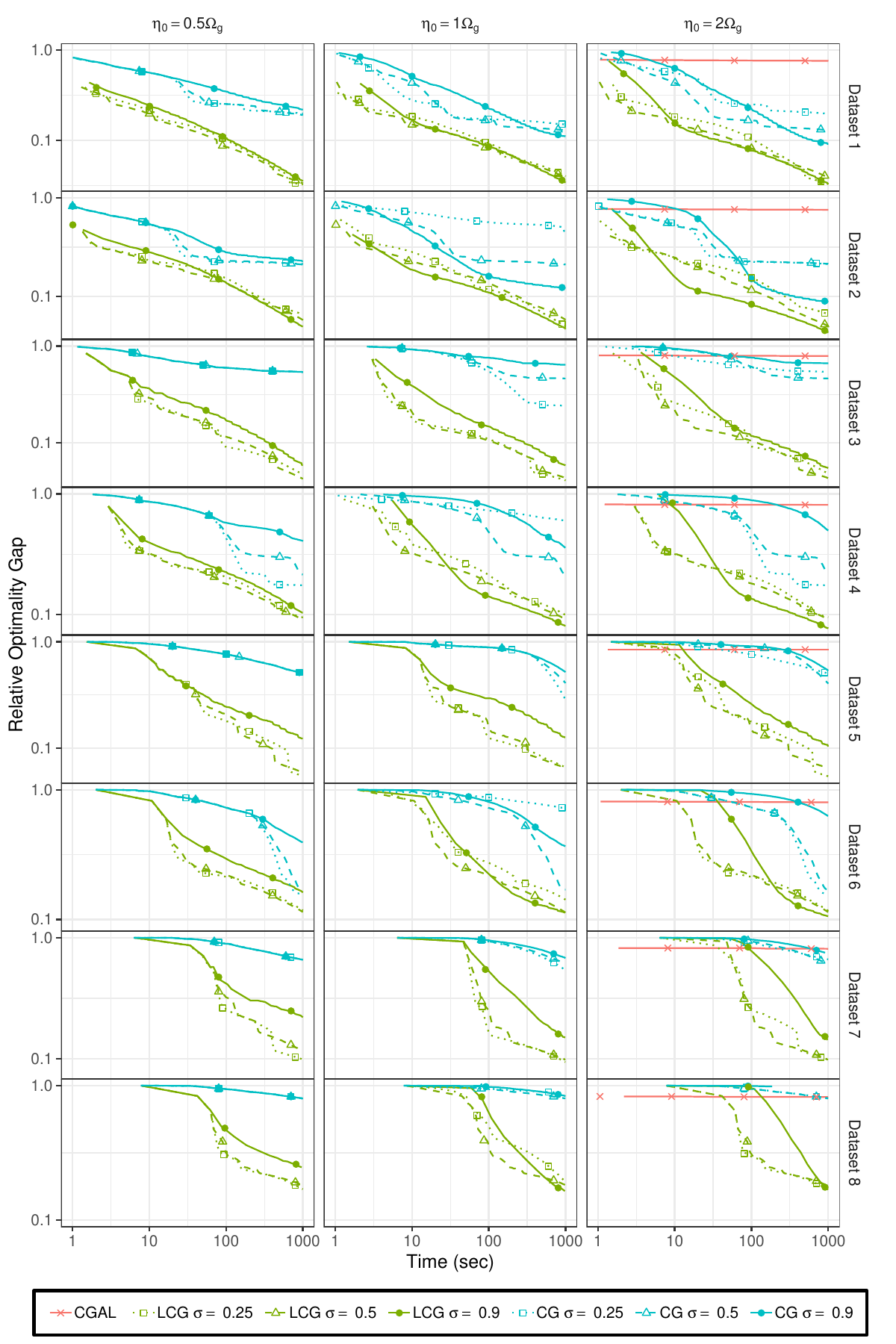}
  \caption{Mixing datasets relative \sh{optimality} gap vs. time for various parameter choices.}\label{fig:Mixing_GapVsTime}
\end{figure}

\begin{table}[t]\caption{Mixing datasets results. For each dataset, time, and algorithm provides the best parameter choice for $\eta_0$ \sh{(in multipliers of $\Omega_g$)} and $\sigma$,  the function value obtained, the relative optimality gap \sh{(Opt. Gap)}, the iteration, and \sh{for $\CGAL$} the relative feasibility gap \sh{(Feas. Gap) of the solution with the best objective value obtained until that time. The best objective value and relative optimality gap for each row are marked in bold.}}
   \label{tbl:MixingDataSetsResults}
   \centering
   \setlength{\tabcolsep}{3pt}
   \resizebox{\textwidth}{!}{
   \begin{tabular}{lr|rrrrr|rrrrr|rrrr}
	   \hline
	   Data & Time & \multicolumn{5}{|c|}{$\CG$} & \multicolumn{5}{|c|}{$\LCG$} & \multicolumn{4}{|c}{$\CGAL$}\\
	   Set&(Sec) &\multicolumn{1}{|l}{$\eta_0$}  &\multicolumn{1}{l}{ $\sigma$} & \multicolumn{1}{l}{Obj.} & \multicolumn{1}{l}{Opt.} & \multicolumn{1}{l|}{Iter} & \multicolumn{1}{|l}{$\eta_0$}  & \multicolumn{1}{l}{ $\sigma$} & \multicolumn{1}{l}{Obj.} & \multicolumn{1}{l}{Opt.} &  \multicolumn{1}{l|}{Iter} &  \multicolumn{1}{l}{Obj.} & \multicolumn{1}{l}{Opt.} & \multicolumn{1}{l}{Feas.} & \multicolumn{1}{l}{Iter}\\
	   & & ($x\cdot\Omega_g$) & & Value  & Gap \% & &  ($x\cdot\Omega_g$) & & Value  & Gap \%  & &Value & Gap &\% Gap \%\\
	   \hline
	   \multirow{3}{*}{MixData1} & 100 &   2 & 0.5 & 13.03 & 16.6 &  2385 &   2 & 0.9 & {\bf 14.39} & {\bf  7.9} & 809 & 3.68 & 76.4 & 21930.2 &  3090 \\
	   & 500 &   2 & 0.9 & 13.96 & 10.7 &  4969 &   2 & 0.25 & {\bf 14.97} & {\bf 4.1} & 9001 & 3.74 & 76.1 & 12304.4 & 11460 \\
	   & 1000 &   2 & 0.9 & 14.20 & 9.1 &  9286 & 0.5 & 0.25 & {\bf 15.12} & {\bf 3.2} & 13598 & 3.76 & 75.9 & 8873.5 & 19981 \\
	   \hline
	   \multirow{3}{*}{MixData2} & 100 &   2 & 0.9 & {6.71} & {15.3} &   957 &   2 & 0.9 & {\bf 7.26} & {\bf 8.4} & 745 & 1.88 & 76.2 & 19184.7 &  2948 \\
	   & 500 &   2 & 0.9 & 7.17 & 9.5 &  4099 &   2 & 0.9 & {\bf 7.49} & {\bf 5.5} & 3873 & 1.90 & 76.0 & 8729.7 & 10864 \\
	   & 1000 &   2 & 0.9 & 7.23 & 8.8 &  8024 &   2 & 0.9 & {\bf 7.59} & {\bf 4.3} & 7758 & 1.91 & 76.0 & 6066.0 & 18977 \\
	   \hline
	   \multirow{3}{*}{MixData3} & 100 &   1 & 0.25 & 32.86 & 54.6 &  1865 & 0.5 & 0.25 & {\bf 65.70} & {\bf 9.2} & 888 & 15.15 & 79.1 & 44622.3 &  1656 \\
	   & 500 &   1 & 0.25 & 54.34 & 24.9 &  8832 &   1 & 0.25 & {\bf 68.88} & {\bf 4.8} & 3289 & 15.30 & 78.8 & 13031.6 &  6080 \\
	   & 1000 &   1 & 0.25 & 54.87 & 24.1 & 14128 &   1 & 0.25 & {\bf 69.34} & {\bf 4.1} & 5851 & 15.32 & 78.8 & 7902.7 & 10657 \\
	   \hline
	   \multirow{3}{*}{MixData4} & 100 &   2 & 0.25 & 10.28 & 42.4 &  1676 &   2 & 0.9 & {\bf 15.42} & {\bf 13.6} & 166 & 3.33 & 81.3 & 116332.0 &  1440 \\
	   & 500 & 0.5 & 0.25 & 14.69 & 17.7 &  4710 &   2 & 0.9 & {\bf 16.15} & {\bf 9.5} & 921 & 3.36 & 81.2 & 57781.9 &  5203 \\
	   & 1000 & 0.5 & 0.25 & 14.72 & 17.5 &  6939 &   2 & 0.9 & {\bf 16.47} & {\bf 7.7} & 1880 & 3.36 & 81.2 & 41696.7 &  9081 \\
	   \hline
	   \multirow{3}{*}{MixData5} & 100 &   2 & 0.25 & 92.61 & 76.2 &  1289 &   1 & 0.25 & {\bf 340.02} & {\bf 12.5} & 415 & 59.39 & 84.7 & 122971.0 &   946 \\
	   & 500 & 0.5 & 0.9 & 163.46 & 57.9 &  4788 & 0.5 & 0.5 & {\bf 360.70} & {\bf 7.2} & 1701 & 59.31 & 84.7 & 126861.0 &  3397 \\
	   & 1000 &   1 & 0.25 & 277.79 & 28.5 &  5359 &   2 & 0.25 & {\bf 367.26} & {\bf 5.5} & 2937 & 59.90 & 84.6 & 64948.0 &  5939 \\
	   \hline
	   \multirow{3}{*}{MixData6} & 100 &   1 & 0.5 & 10.52 & 72.6 &   751 &   2 & 0.25 & {\bf 30.11} & {\bf 21.5} & 256 & 7.44 & 80.6 & 149166.0 &   811 \\
	   & 500 & 0.5 & 0.25 & 28.98 & 24.5 &  2995 &   2 & 0.9 & {\bf 33.59} & {\bf 12.5} & 232 & 7.60 & 80.2 & 103077.0 &  3004 \\
	   & 1000 & 0.5 & 0.25 & 32.30 & 15.8 &  4661 &   2 & 0.9 & {\bf 34.24} & {\bf 10.8} & 482 & 7.66 & 80.0 & 68409.9 &  5270 \\
	  \hline
	   \multirow{3}{*}{MixData7} & 100 & 0.5 & 0.9 & 35.69 & 89.3 &   496 &   1 & 0.25 & {\bf 276.90} & {\bf 16.9} &  82 & 59.19 & 82.2 & 702281.0 &   478 \\
	   & 500 &   1 & 0.25 & 102.57 & 69.2 &   984 & 0.5 & 0.25 & {\bf 295.47} & {\bf 11.3} & 616 & 62.41 & 81.3 & 239810.0 &  1766 \\
	   & 1000 &   1 & 0.25 & 152.56 & 54.2 &  2101 &   1 & 0.25 & {\bf 301.48} & {\bf 9.5} & 1447 & 62.78 & 81.2 & 185699.0 &  3124 \\	  	   \hline
\multirow{3}{*}{MixData8} & 100 & 0.5 & 0.9 & 4.49 & 94.4 &   306 & 0.5 & 0.25 & {\bf 55.80} & {\bf 30.3} &  66 & 13.77 & 82.8 & 613444.0 &   431 \\
	   & 500 & 0.5 & 0.9 & 12.03 & 85.0 &  1455 &   2 & 0.25 & {\bf 63.64} & {\bf 20.5} & 566 & 13.99 & 82.5 & 282965.0 &  1593 \\
	   & 1000 & 0.5 & 0.9 & 15.84 & 80.2 &  2751 &   1 & 0.9 & {\bf 66.98} & {\bf 16.3} & 133 & 14.05 & 82.4 & 243686.0 &  2817 \\
	   \hline
   \end{tabular}}
\end{table}

\sh{Figures~\ref{fig:Mixing_GapVsIter} and \ref{fig:Mixing_GapVsTime} indicate that} the line search version $\LCG$ outperforms $\CG$, although in most cases less iterations are performed. One explanation of this phenomenon is that the step size policy employed in $\CG$ is based on global optimization ideas which do not take into account the local structure of the problem. $\LCG$ instead captures these local features of the problem 
\gi{more accurately},
leading to better numerical performance.
By comparison, $\CGAL$ performs quite poorly, with optimality gaps ranging from $70\%$ to $90\%$ roughly; \sh{this is closely linked to large relative feasibility gaps, exceeding $10000\%$.} One possible reason for these results is that $\CGAL$'s convergence guarantees rely on the magnitude of the  dual variables, which in turn depend on the size of the Slater condition gap. Since the right-hand side $d_{ij}^2$ of the constraints may be very small, this gap will generally be small, causing the norm of the dual variables to become large and slowing convergence. This may also explain the contrast with $\CGAL$'s superior performance in the MaxCut example. \sh{Additionally, Table~\ref{tbl:MixingDataSetsResults} does not indicate that there is a preferable choice of parameters for $\CG$ and $\LCG$, since the best configuration is different for different data sets and times. 
}

\subsection{Randomly Scaled SDP}\label{sec:SRS}

Finally, we consider a general SDP problems with inequality constraints, which we use to investigate the sensitivity of the compared methods to problem scaling. The synthetically scaled SDP (SRS) takes the form
\begin{equation}\label{eq:SRS}
   \begin{aligned}
	   &\max&&\inner{\bC,\bX}\\
	   &\text{s.t.}&& \inner{\bA_i,\bX}\leq b_i\quad i=1,\ldots,m\\
	   &&& \tr{(\bX)}\leq 1\\
	   &&& \gi{\bX}\succeq 0.
   \end{aligned}
\end{equation}
\sh{Let $\setQ=\{\by\in\R^{\gi{m}}\vert y_i\leq b_i,i\in \{1,2,\ldots,m\}\}$ and $\setK=\{(\gi{\by},t)\in\R^{m}\times\R\vert\frac{1}{t}\gi{\by}\in\setQ,t>0\}$. This is a closed convex cone with logarithmically homogeneous barrier
\[
f(\gi{\by},t)=-\sum_{i=1}^m\log(t b_i-y_{i})=-\sum_{i=1}^m\log(b_i-\frac{1}{t}y_{i})-m\log(t),
\]
i.e., $F(\bX,t)=f(\scrP(\bX,t))$, where $\scrP(\bX,t)=[\inner{\bA_1,\bX};\inner{\bA_2,\bX};\ldots;\inner{\bA_m,\bX};t]\in\R^{m}\times\R$.
We thus arrive at a formulation of the form of problem \eqref{eq:Opt}, which reads explicitly as
\begin{equation}\label{eq:SRS2}
   \begin{split}
	   \min_{\bX,t}\quad &g(\bX):=\inner{-\bC,\bX}\\
	   \text{s.t.}\quad &\scrP(\bX,t)\in\setK\\
	   & (\bX,t)\in\setX:=\{(\bX',t')\in\scrS^{n}\times\R\vert \bX'\succeq 0,\;\tr(\bX')\leq 1,t'=1\}.
   \end{split}
\end{equation}}

%
\sh{We randomly generate the objective function and constraint coefficients to be PSD matrices with unit Frobenius norm \gi{where, given a matrix $M \in \mathbb{R}^{m_1 \times m_2}$, its Frobenius norm is $\|M\|_\text{F} := \sqrt{\sum_{i =1}^{m_1} \sum_{j=1}^{m_2} M_{ij}^2}$}. Specifically, the constraint matrices for every instance and $i=1,\ldots,m$ are generated by first sampling $\bu_i\in \R^{n}$ with components from a standard Gaussian distribution, rounded to the nearest integer, then setting $\bA_i=\bu_i\bu_i^\top$, and finally normalizing the matrix to Frobenius norm of 1. Similarly, the objective matrix is generated by sampling $\UU_0\in\R^{n\times n}$ with components from a standard Gaussian distribution, rounded to the nearest integer, and $\bC=(\UU_0\UU_0^\top)/\norm{(\UU_0\UU_0^\top)}_F$. Additionally, we generate a PSD reference solution $\bX_0$ by sampling $\bV\in \R^{n\times n}$ with components from a standard Gaussian distribution, then setting $\bX_0=\bV\bV^\top$ and, finally, normalizing the result to have a trace equal to 1.
The right-hand side constraint coefficients are constructed as $b_i = \frac{u_i}{i^{p/2}} \inner{\bA_i, \bX_0}$, with $u_i$ is a random variable uniformaly disributed over $[\frac{1}{2}, 1]$, and $p \in \{0, 1, 2\}$, where $p$ is a parameter that measures the scaling of the constraints. 
We note that $\CGAL$ ran on the normalized problem, using the normalization guidelines specified in \cite{YurTroFerUdeCev21}.}

Similarly to the previous examples, the performance metric to benchmark our experimental results is the relative optimality gap, defined $\frac{|g(\gi{\bX}) - g(\gi{\bX}^{\SDPT3}))|}{|g(\gi{\bX}^{\SDPT3})|}$, where $\gi{\bX}^{\SDPT3}$ is the optimal solution computed by the interior-point solver $\SDPT3$.
We compare our solutions with those generated by $\CGAL$.
However, since $\CGAL$ does not necessarily produce feasible solutions, its reported solution $\bX^{\CGAL}$ is corrected to ensure feasibility.
\sh{Specifically, given a relative feasibility gap $\gamma=\max_{i\in\{1,2,\ldots,m\}}\frac{\max\{\{\inner{\bA_i,\bX}-\bb_i\},0\}}{\bb_i}$, the corrected solution was given by $\tilde{\bX}^{\CGAL}=\bX^{\CGAL}/(1+\gamma)$, and the relative optimality gap is obtained by using $\tilde{\bX}^{\CGAL}$.}

\sh{
\gi{For each value of $p$, we ran $\CGAL$, $\LCG$ and $\CG$ on 30 realizations of problems of size $n=100$ and $m=100$.}
\sh{All datasets were run for both $\CG$ and $\LCG$ with parameters $\eta_0=2\Omega_g$
and $\sigma\in\{0.25,0.9\}$. and all methods were stopped when either after $1000$ seconds of running time.}
For every $p\in\{0,1,2\}$, realization $R\in\{1,2,\ldots,30\}$ of the data and algorithm $M$, we record the objective value $g^{M,p,R}_i$ (corrected as explained above for $\CGAL$) obtained after the iteration, the corresponding relative optimality gap $r^{M,p,R}_i$, the time since the start of the run $t^{M,p,R}_i$ and, for $\CGAL$, the relative feasibility gap $\gamma^{\CGAL,p,R}_i$.}

\begin{table}[H]
\caption{Computational results for randomly scaled SDP datasets. For each time point, value of the parameter $p$, and algorithm ($\CG$ and $\LCG$ with $\sigma \in \{0.25, 0.9\}$, and $\CGAL$), the table provides the objective function value (Obj. Value), the relative optimality gap (Opt. Gap), the iteration count and, only for $\CGAL$, the relative feasibility gap (Feas. Gap) of the solution with the best objective value obtained up to that time. For each value of $p$, the metrics displayed in the table are aggregated across 30 datasets, and the best relative optimality gap achieved is marked in bold.}
\label{tbl:srs_tab}
\centering
\setlength{\tabcolsep}{3pt}
\resizebox{1\textwidth}{!}{
\gi{
\begin{tabular}{lr|lrr|lrr|lrr|lrr|lrrr}
\hline
Data & Time & \multicolumn{3}{|c|}{$\CG$ $\sigma=0.25$} & \multicolumn{3}{|c|}{$\CG$ $\sigma=0.9$} & \multicolumn{3}{|c|}{$\LCG$ $\sigma=0.25$} & \multicolumn{3}{|c}{$\LCG$ $\sigma=0.9$} & \multicolumn{4}{|c}{$\CGAL$} \\ \hline
$p$ & (sec) & \multicolumn{1}{|l}{Obj.} & \multicolumn{1}{l}{Opt.} & \multicolumn{1}{l|}{Iter} & \multicolumn{1}{|l}{Obj.} & \multicolumn{1}{l}{Opt.} & \multicolumn{1}{l|}{Iter} & \multicolumn{1}{|l}{Obj.} & \multicolumn{1}{l}{Opt.} & \multicolumn{1}{l|}{Iter} & \multicolumn{1}{|l}{Obj.} & \multicolumn{1}{l}{Opt.} & \multicolumn{1}{l|}{Iter} & \multicolumn{1}{|l}{Obj.} & \multicolumn{1}{l}{Opt.} & \multicolumn{1}{l}{Feas.} & \multicolumn{1}{l}{Iter} \\ & & Value & Gap \%& & Value & Gap \%& & Value & Gap \%& & Value & Gap \%& & Value & Gap \%& Gap \%&  \\ \hline\hline
\multirow{3}{*}{0} & 100 & -186.45 & 0.671 & 2067 & -186.78 & 0.4978 & 1467 & -186.79 & 0.4908 & 1543 & -186.99 & 0.3875 & 1305 & -187.67 & \textbf{0.0059} & 0.0183 & 13349 \\
& 500 & -187.19 & 0.2788 & 6665 & -187.34 & 0.198 & 6120 & -187.28 & 0.2272 & 6095 & -187.4 & 0.1666 & 6201 & -187.7 & \textbf{0.0009} & 0.0049 & 50070 \\
& 1000 & -187.33 & 0.2027 & 13071 & -187.46 & 0.1308 & 11662 & -187.4 & 0.1653 & 11918 & -187.5 & 0.1139 & 12133 & -187.7 & \textbf{0.0004} & 0.0041 & 88240 \\
\hline
\multirow{3}{*}{1} & 100 & -143.26 & 1.3039 & 1131 & -143.56 & 1.0994 & 1020 & -143.81 & 0.9277 & 1099 & -144.01 & \textbf{0.786} & 783 & -141.99 & 1.7272 & 2.1609 & 13266 \\
& 500 & -144.38 & 0.5311 & 3976 & -144.53 & 0.423 & 3841 & -144.58 & 0.3932 & 3626 & -144.66 & 0.3374 & 3427 & -144.26 & \textbf{0.2718} & 0.6039 & 50306 \\
& 1000 & -144.62 & 0.3621 & 7458 & -144.73 & 0.2838 & 7314 & -144.72 & 0.2955 & 7014 & -144.8 & 0.235 & 6790 & -144.83 & \textbf{0.1209} & 0.213 & 87925 \\
\hline
\multirow{3}{*}{2} & 100 & -107.75 & 1.5629 & 767 & -108.18 & 1.1652 & 686 & -108.31 & 1.0371 & 612 & -108.64 & \textbf{0.7336} & 569 & -32.44 & 70.2786 & 265.9191 & 14138 \\
& 500 & -108.82 & 0.5716 & 2868 & -109.02 & 0.3879 & 2724 & -108.98 & 0.4199 & 2727 & -109.14 & \textbf{0.276} & 2603 & -65.05 & 40.3478 & 74.8299 & 50504 \\
& 1000 & -108.98 & 0.4222 & 5519 & -109.15 & 0.2695 & 5283 & -108.64 & 0.3322 & 5374 & -109.22 & \textbf{0.2037} & 5191 & -82.82 & 24.0898 & 35.2035 & 90162 \\
\hline
\end{tabular}}
}
\end{table}

Table \ref{tbl:srs_tab} 
displays the performance of $\CGAL$, $\CG$ and $\LCG$ after time points $t\in \{100,500,1000\}$ seconds for different values of the parameter $p$, in order to study algorithmic behaviour under different time constraints.
\sh{Specifically, for each 
method $M$, parameter value $p$, realization $R$, and time $t$,  we compute $
\gi{(j^*)^{M,p,R}}
=\argmin_{j}\{r^{M,p,R}_\gi{j}:t_j^{M,p,R}\leq t\}$ and $k^{M,p,R}=\max\{j:t_j^{M,p,R}\leq t\}$. 
The table displays the average 
objective function value (Obj. Value) given by $\frac{1}{30}\sum_{R} g^{M,p,R}_{(j^*)^{M,p,R}}$, the average relative optimality gap (Opt. Gap) $\frac{1}{30}\sum_{R} r^{M,p,R}_{(j^*)^{M,p,R}}$, as well as the number of iterations (Iter) until that time  given by $\frac{1}{30}\sum_{R}k^{M,p,R}
$ and, for $\CGAL$, the average feasibility gap (Feas. Gap) $\frac{1}{30}\sum_R \gamma^{M,p,R}$.}
Table \ref{tbl:srs_tab} shows that $\CGAL$ benefits from inexpensive iterations, allowing it to close the optimality gap more quickly than $\CG$ and $\LCG$ in favorable settings ($p=0$).
However, as $p$ increases to 2, although $\CGAL$ still performs more iterations than our algorithms, it fails to close the infeasibility gap even after 1000 seconds.
This leads to a significantly larger optimality gap, as our calculations penalize infeasible solutions.
In contrast, $\CG$ and $\LCG$ demonstrate robust performance, with little variation across different values of $p$. \sh{In particular $\LCG$ reaches an average optimality gap smaller than $1\%$ after 100 seconds for all values of $p$, while the average optimality gap of  $\CG$ increases by a factor of 2-2.5 when moving from $p=0$ to $p=2$, but still maintaining less than $1\%$ average optimality gap after 500 seconds for all values of $p$.}

\sh{The performance of the average relative optimality gap with respect to iteration and time for each value of $p$ is visually depicted in Figures~\ref{fig:SRS_gap_iter_supp} and \ref{fig:SRS_gap_time_supp}, respectively. In these figures, it can clearly be observed that the performance of $\CG$ and $\LCG$ is robust to the value of $p$, while $\CGAL$'s performance deteriorates significantly as deteriorates as $p$ gets larger.}
\gi{
In order to create Figure \ref{fig:SRS_gap_iter_supp} we first compute, for each iteration $i$, algorithm $M$, parameter value $p \in \{0, 1, 2\}$ and realization $R \in \{1, \dots, 30\}$, the best relative optimality gap $\hat{r}^{M,p,R}_i = \min_{j\leq i}\{r^{M,p,R}_j\}$ up to iteration $i$.
We then plot its average $\frac{1}{30}\sum_{R} \hat{r}^{M,p,R}_i$ across all realizations, for each iteration $i$.
Instead, Figure \ref{fig:SRS_gap_time_supp} is obtained by first computing, for each time $t$, algorithm $M$, parameter value $p$ and realization $R$, the best relative optimality gap \sh{$r^{M,p,R}_{(j^*)^{M,p,R}}$ obtained up to time $t$ (as defined above)}, and then by averaging these values across all realizations.}
\sh{These figures elucidate that while $\CGAL$ performance deteriorates as $p$ increases, the performance of $\CG$ and $\LCG$ is robust to the scaling of the problem, .
}

\begin{figure}[ht!]
\centering
\includegraphics[scale=0.37]{./Figures/gap_over_iter_dpi.png}
\caption{Randomly scaled SDP problem: relative optimality gap vs. iterations for several parameter choices (averaged over 30 instances for each $p$)}
\label{fig:SRS_gap_iter_supp}
\end{figure}

\begin{figure}[ht!]
\centering
\includegraphics[scale=0.37]{./Figures/gap_over_time_dpi.png}
\caption{Randomly scaled SDP problem: relative optimality gap vs. time for several parameter choices (averaged over 30 instances for each $p$)}
\label{fig:SRS_gap_time_supp}
\end{figure}

\end{document}